\title[Gelfand-Kirillov dimension of representations of $\GL_n$]{Gelfand-Kirillov dimension of Representations of $\GL_n$ over a non-archimedean local field}
\author{Kenta Suzuki}
\address{
  Department of Mathematics,
  Massachusetts Institute of Technology,
  Cambridge,
  MA 02142-4307 USA
}
\email{kjsuzuki@mit.edu}
\date{August 2022}
\numberwithin{equation}{section}
\newtheorem{thm}{Theorem}[section]
\newtheorem{lemma}[thm]{Lemma}
\newtheorem{cor}[thm]{Corollary}
\newtheorem{prop}[thm]{Proposition}
\theoremstyle{definition}
\newtheorem*{rmk}{Remark}
\newtheorem*{defn}{Definition}
\newtheorem{example}[thm]{Example}
\newcommand{\C}{{\mathbb C}}
\newcommand{\Z}{{\mathbb Z}}
\newcommand{\Q}{{\mathbb Q}}
\newcommand{\cO}{{\mathcal O}}
\newcommand{\sO}{{\mathscr O}}
\newcommand{\cN}{{\mathcal N}}
\newcommand{\bP}{{\mathbb P}}
\newcommand{\bK}{{\mathbf K}}
\newcommand{\fA}{{\mathfrak A}}
\newcommand{\one}{{\mathbf 1}}
\newcommand{\p}{{\mathfrak p}}
\newcommand{\fg}{{\mathfrak g}}
\newcommand{\SL}{{\mathrm{SL}}}
\newcommand{\GL}{{\mathrm{GL}}}
\newcommand{\PGL}{{\mathrm{PGL}}}
\DeclareMathOperator{\Gal}{Gal}
\DeclareMathOperator{\Ind}{Ind}
\DeclareMathOperator{\cInd}{c-Ind}
\DeclareMathOperator{\nInd}{n-Ind}
\DeclareMathOperator{\supp}{supp}
\begin{document}
\maketitle
\begin{abstract}
    We calculate the asymptotic behavior of the dimension of the fixed vectors of $\pi$ with respect to compact open subgroups $1+ M_n(\p^N)\subset\GL_n(F)$ for $\pi$ an admissible representation of $\GL_n(F)$, and $F$ a nonarchimedean local field. Such dimensions can be calculated by germs of the character of $\pi$. We also make some observations on how those dimensions behave under instances of Langlands functoriality, such as the Jacquet-Langlands correspondence and cyclic base change, where relations between characters are known.
\end{abstract}

\tableofcontents

\section{Introduction}\label{section-introduction}

Smooth representations of $\GL_n(F)$, for a nonarchimedean local field $F$ such as $\Q_p$ or $\mathbb F_p((t))$, is of great interest in number theory, for instance due to the local Langlands correspondence. However, most such representations are infinite-dimensional, so finding a finite measure of their size is important. One such measure is the Gelfand-Kirillov dimension:

\begin{defn}
Let $\pi$ be a (complex) admissible representation of $\GL_n(F)$. The \emph{Gelfand-Kirillov dimension} (henceforth, GK-dimension) of $\pi$ is a real number $r\ge0$, denoted $\dim_{GK}(\pi)$, such that
\[
\dim(\pi^{K_N})\sim q^{rN},
\]
where $K_N\colonequals 1+M_n(\p_F^N)$ is a compact open subgroup of $\GL_n(F)$ and $q$ is the order of the residue field $\sO_F/\p_F$. Here, for functions $f(N)$ and $g(N)$, denote $f(N)\sim g(N)$ if there exists real numbers $0<a<b$ such that for sufficiently large integers $N>0$, the inequality $af(N)<g(N)<bf(N)$ is satisfied.
\end{defn}

In fact, Harish-Chandra and Howe's local character formula says this function $\dim(\pi^{K_N})$ is a polynomial in $q^{N-1}$ for $N$ large (see Corollary~\ref{fixed-poly}), which we call the \emph{growth polynomial} of $\pi$. Now, $\dim_{GK}(\pi)$ is exactly the degree of the growth polynomial (and, in particular, is an integer). A more ambitious goal would be to calculate the entire growth polynomial. The coefficients of the growth polynomial contain important information about the representation, as they behave well under instances of Langlands functoriality (see Section~\ref{section-functoriality}.)

Irreducible representations of $\GL_n(F)$ were classified by Bernstein and Zelevinsky \cite{BZ2} in terms of multisegments---combinatorial data---of supercuspidal representations (those corresponding to irreducible representations under the Langlands correspondence). Roughly, arbitrary representations are obtained by inducing from parabolic subgroups, and supercuspidal representations are those not obtained from such a procedure.

For supercuspidal representations $\rho$ of $\GL_n(F)$, it is a result of Howe \cite{howe} that $\dim_{GK}(\rho)=\frac{n(n-1)}2$, and in fact, the leading term of the growth polynomial is known (see Proposition~\ref{cusp-asymp}):
\[\dim(\rho^{K_N})=(1+o(1))[n!]_qq^{\frac{n(n-1)}2(N-1)},\]
where $[n!]_q\colonequals [1]_q\cdots[n]_q=\frac{q-1}{q-1}\cdots\frac{q^n-1}{q-1}$ is the $q$-factorial. This was extended to generic representations by Rodier \cite{rodier2}.

We further extend these results, and calculate the leading term of the growth polynomial for \emph{arbitrary} admissible representations of $\GL_n(F)$, based on the combinatorial datum of the Bernstein-Zelevinsky classification \cite{BZ2}:
\begin{thm}\label{intro-thm}
Let $\pi$ be an arbitrary admissible irreducible representation of $\GL_n(F)$. Then, there exists a multisegment $a=\{\Delta_1,\dots,\Delta_m\}$ such that $\rho=\langle a\rangle$, where $\Delta_i=[\rho_i,\nu^{r_i-1}\rho_i]$ for supercuspidal representations $\rho_i$ of $\GL_{n_i}(F)$. Then,
\[
\dim(\pi^{K_N})=(1+o(1))\frac{[n!]_q}{[r_1!]_{q^{n_1}}\cdots[r_m!]_{q^{n_m}}}(q^{N-1})^{\frac12(n^2-n_1r_1^2-\cdots-n_mr_m^2)},
\]
so in particular,
\[
\dim_{GK}(\pi)=\frac12(n^2-n_1r_1^2-\cdots-n_mr_m^2).
\]
\end{thm}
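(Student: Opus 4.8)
The plan is to reduce the computation of $\dim(\pi^{K_N})$ to the already-known supercuspidal case via the Harish-Chandra–Howe local character formula and the theory of induced representations. Since $\dim(\pi^{K_N})$ is governed by the germ expansion of the character $\Theta_\pi$ near the identity, and the leading term corresponds to the coefficient of the largest nilpotent orbit $\mathcal O$ occurring in that expansion, the key point is to identify the \emph{wave front set} (equivalently, the largest nilpotent orbit with nonzero coefficient in the local character expansion) of $\pi=\langle a\rangle$ and to compute the associated coefficient. For $\GL_n$, nilpotent orbits are parametrized by partitions of $n$, and it is known (work of Mœglin–Waldspurger on wave front sets, specialized to $\GL_n$) that the wave front set of $\langle a\rangle$ is the nilpotent orbit attached to a partition determined by the multisegment $a$; one expects the relevant partition to have parts read off from the $n_ir_i$'s, so that $\dim\mathcal O = \dim(\GL_n/P)$ for the appropriate parabolic and hence $\frac12\dim\mathcal O = \frac12(n^2 - n_1r_1^2 - \cdots - n_mr_m^2)$ — matching the claimed exponent.

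The concrete steps I would carry out are as follows. First, recall from the excerpt (Corollary~\ref{fixed-poly} and Proposition~\ref{cusp-asymp}) that for a supercuspidal $\rho$ of $\GL_\ell(F)$ one has $\dim(\rho^{K_N}) = (1+o(1))[\ell!]_q q^{\frac{\ell(\ell-1)}2(N-1)}$. Second, I would handle the ``Speh-type'' building blocks: for a single segment $\Delta = [\rho_0, \nu^{r-1}\rho_0]$ with $\rho_0$ supercuspidal of $\GL_{n_0}(F)$, the representation $\langle\Delta\rangle$ (a generalized Steinberg, or its Langlands-quotient analogue) should satisfy $\dim(\langle\Delta\rangle^{K_N}) = (1+o(1))\frac{[(n_0r)!]_q}{[r!]_{q^{n_0}}} q^{\frac12((n_0r)^2 - n_0r^2)(N-1)}$; this is the $m=1$ case of the theorem and can be attacked either by an explicit Jacquet-module / Iwahori–Hecke computation, or by computing the wave front set of $\langle\Delta\rangle$ directly (it is the orbit for the partition $(r^{n_0})$ of $n_0r$, whose dimension is $(n_0r)^2 - n_0r^2$). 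Third, for a general multisegment $a = \{\Delta_1,\dots,\Delta_m\}$, I would use that $\langle a\rangle$ is a subquotient of the parabolically induced representation $\langle\Delta_1\rangle \times \cdots \times \langle\Delta_m\rangle$, and that the leading asymptotics are \emph{inherited} from the full induced representation: by Frobenius reciprocity / Mackey theory, $\dim((\sigma_1\times\cdots\times\sigma_m)^{K_N})$ grows like $q^{\dim(\GL_n/P)(N-1)}$ times the product of the growths of the $\sigma_i^{K_N'}$ (with a binomial-type $q$-multinomial correction coming from counting $P\backslash \GL_n/K_N$), and one must check that no cancellation or lower-order collapse occurs when passing to the subquotient $\langle a\rangle$ — i.e., that $\langle a\rangle$ realizes the full wave front set of the induced representation. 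Assembling these gives the stated product $\frac{[n!]_q}{[r_1!]_{q^{n_1}}\cdots[r_m!]_{q^{n_m}}}$ and the exponent $\frac12(n^2 - \sum_i n_ir_i^2)$, and the final sentence about $\dim_{GK}$ is then immediate from the definition.

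The main obstacle, I expect, is the third step: controlling the passage from the induced representation to the Langlands subquotient $\langle a\rangle$. A priori $\langle a\rangle$ could fail to contain the top nilpotent orbit of the induced representation in its character expansion, in which case its growth polynomial would have strictly smaller degree; one must rule this out. For $\GL_n$ this should follow from the fact that $\langle a\rangle$ is the \emph{unique} irreducible subquotient whose wave front set is maximal among all constituents — a consequence of the Zelevinsky/Mœglin–Waldspurger description, or alternatively of the fact that the Bernstein–Zelevinsky derivative filtration detects the leading term — but making this rigorous (and keeping track of the exact constant, not just the degree) is the delicate part. A secondary technical point is the $q$-combinatorics: verifying that the count of relevant double cosets produces exactly the $q$-multinomial coefficient $\binom{n}{n_1r_1,\dots}_q$-type factor that, combined with the $[(n_ir_i)!]_q/[r_i!]_{q^{n_i}}$ contributions from the blocks, telescopes to $[n!]_q / \prod_i [r_i!]_{q^{n_i}}$. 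This is routine $q$-series bookkeeping but must be done carefully to pin down the leading coefficient.
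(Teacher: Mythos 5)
Your overall architecture does match the paper's: supercuspidal asymptotics (Proposition~\ref{cusp-asymp}, Corollary~\ref{fixed-poly}), then the single-segment building blocks $\langle\Delta_i\rangle$, then the Mackey/double-coset count for $\langle\Delta_1\rangle\times\cdots\times\langle\Delta_m\rangle$ (Lemma~\ref{parabolic-size} and Lemma~\ref{induced-dim} in the paper), and finally the claim that the subquotient $\langle a\rangle$ inherits the leading term. The $q$-multinomial bookkeeping you worry about in your last paragraph is indeed routine once the block case is known. But the two steps you yourself flag as delicate are exactly where the paper's content lies, and your proposal does not supply a workable argument for either of them.

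For the single-segment case, knowing the wave front set of $\langle\Delta\rangle$ only gives the exponent $\tfrac12(n^2-n_1r^2)$, not the leading coefficient $[n!]_q/[r!]_{q^{n_1}}$. To extract the coefficient from the character expansion you would need the constant $c_{\mathcal O}(\langle\Delta\rangle)$ of the maximal orbit in the normalization of Lemma~\ref{induced-char}; the multiplicity-one statement for degenerate Whittaker models lives in a different normalization of the orbital integrals, and the conversion factor is precisely the nontrivial input (e.g.\ for $\Delta=[\rho,\nu\rho]$ with $\rho$ supercuspidal on $\GL_2$, the coefficient in the paper's normalization works out to $[2!]_q^2/[2!]_{q^2}$, not $1$). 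Your alternative route is no better: an Iwahori--Hecke computation is unavailable unless the supercuspidal support is unramified principal series type ($\langle\Delta\rangle$ has no Iwahori-fixed vectors for general $\rho_i$), and Jacquet modules do not directly control $\dim(\pi^{K_N})$. The paper instead proves Proposition~\ref{delta-dim} by an exact recursion: $\langle\Delta\rangle$ is irreducible over the mirabolic subgroup, is identified (by comparing Bernstein--Zelevinsky derivatives) with a representation induced from $P_{(n-n_1)+(n_1-1)+1}$, and its $K_N$-invariants are recovered from $K_N\cap M_n$-invariants via a transpose/duality trick; this is the technical heart and is absent from your plan. Similarly, for the no-collapse step you assert that $\langle a\rangle$ is the unique constituent of the induced representation with maximal wave front set, but that assertion is what has to be proved. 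The paper's Lemma~\ref{main-lemma} does it elementarily: Zelevinsky's decomposition $\pi(a)=\langle a\rangle+\sum_{b<a}m(b;a)\langle b\rangle$ in the Grothendieck group, plus the strict drop of $\dim_{GK}\pi(b)$ under elementary operations (convexity of $x^2$), which yields the strict degree inequalities needed to keep the full leading coefficient in the $(1+o(1))$ statement, not merely the GK-dimension. Until these two steps are actually carried out, the proposal is a plausible outline rather than a proof.
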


In fact, we are able to express the growth polynomial of arbitrary admissible representations of $\GL_n(F)$ \emph{exactly} in terms of growth polynomials for supercuspidal representations. As a consequence, we deduce that $\dim(\pi^{K_N})$ is an \emph{integer-coefficient} polynomial in $q^{N-1}$ (see Corollary~\ref{int-coeff}), generalizing a result of Howe \cite{howe}.

The difficulty is that although every irreducible representation of $G=\GL_n(F)$ appears as a subquotient of some induced representation $\nInd_P^G(\rho)$ with $\rho$ a supercuspidal representation of the parabolic subgroup $P\subset G$, such representations may have \emph{multiple} irreducible subquotients, and we can a priori only detect the maximal GK-dimension (see Lemma~\ref{exact-seq}). We identify exactly the irreducible subquotient of $\nInd_P^G(\rho)$ with maximal GK-dimension (see Lemma~\ref{main-lemma}) and for each irreducible representation $\pi$ find a pair $(P,\rho)$ such that $\pi$ has maximal GK-dimension among the irreducible subquotients of $\nInd_P^G(\rho)$. Here, $\rho$ may no longer be supercuspidal, but are products of representation of the form $\langle\Delta\rangle$ for some segment $\Delta$, whose growth polynomial is calculated in Subsection~\ref{subsection-delta}.


The paper is structured as follows: In section~\ref{section-terms} we gather the notation used throughout the paper. In section~\ref{section-arbitrary} we prove the main result, the calculation of the leading term of the growth polynomial for arbitrary irreducible representations of $\GL_n(F)$. In section~\ref{section-more-examples} we calculate some examples, corresponding to those multisegments with particularly simple combinatorial structure. In section~\ref{section-local-char} we look at Howe's local character formula, and reproduce a proof that indeed $\dim(\pi^{K_N})$ is a polynomial in $q^{N-1}$. Finally, in section~\ref{section-functoriality} we explore how growth polynomials behave under some instances of Langlands functoriality, such as Jacquet-Langlands, base change, and automorphic induction.

We also briefly consider the growth polynomial for representations of $\SL_n(F)$ in appendix~\ref{section-sl-n}, which we deduce from the theory for $\GL_n(F)$. In appendix~\ref{appendix-cusp} we provide a more direct proof of $\dim_{GK}(\rho)=\frac{n(n-1)}2$ for $\rho$ supercuspidal, based on Bushnell and Kutzko's classification \cite{Bush-Kutz}. Finally, in appendix~\ref{appendix-level-0} we provide explicit calculations for growth polynomials of level zero representations of $\GL_2$ and $\GL_3$, based on the techniques developed in appendix~\ref{appendix-cusp}.


\section{Terminology and Conventions}\label{section-terms}

Throughout the paper, $F$ is a nonarchimedean local field of residual characteristic $p$. That is, $F$ is a finite extension of $\Q_p$ or $\mathbb F_p((T))$. Let $\sO_F$ denote the ring of integers of $F$, and let $\p_F\subset\sO_F$ denote the unique maximal ideal. Moreover, let $q_F$ denote the size of the residue field $k_F\colonequals\sO_F/\p_F$. Subscripts are dropped whenever $F$ is clear from context. 

Note that $F$ is assumed to have characteristic zero in Sections~\ref{section-local-char}~and~\ref{section-functoriality}.

Fix a nontrivial character $\psi\colon F\to\C^\times$ of level zero, i.e., $\psi$ is trivial on $\p_F$ but not on $\cO_F$.

By a \emph{representation} of $\GL_n(F)$, we always mean a complex, admissible, smooth representations. That is, we will only consider representations $(\pi,V)$ such that $\pi\colon\GL_n(F)\to\GL_\C(V)$ is continuous with respect to the discrete topology on $\GL_\C(V)$, and such that for any compact open subgroup $K\subset\GL_n(F)$ the space of fixed vectors
\[
V^K\colonequals \{v\in V:\pi(k)v=v\text{ for any }k\in K\}
\]
is finite-dimensional. The notation $\pi$ is used for arbitrary admissible smooth representations of $\GL_n(F)$, while $\rho$ is reserved for supercuspidal representations. We will denote induction by $\Ind$, compact induction by $\cInd$, and normalized induction by $\nInd$ (i.e., $\nInd_H^G(\pi)\colonequals \Ind_H^G(\delta_G^{1/2}\delta_H^{-1/2}\otimes\pi)$).

Let $\mathcal R$ be the Grothendieck ring of representations of $\GL$, i.e., the free abelian group generated by representations of $\GL_n$, modulo $\phi-\pi-\sigma$ for any exact sequence
\[
0\to\pi\to\phi\to\sigma\to0
\]
of representations of $\GL_n$. It becomes a graded commutative ring by the product defined in Section~\ref{section-arbitrary} (see also \cite[Thm~1.9]{BZ2}).

We define a series of compact open subgroups of $\GL_n(F)$, by $K_0\colonequals \GL_n(\sO_F)$, the maximal compact subgroup, and for $N>0$, by
\[
K_N\colonequals 1+M_n(\p_F^N)=\ker(K_0\to\GL_n(\sO_F/\p_F^N)).
\]
The subgroups $\{K_N\}_{N\ge0}$ form a fundamental system of neighborhoods of $1\in\GL_n(F)$.

\begin{defn}
Let $\pi$ be a representation of $\GL_n(F)$. Then, the \emph{growth polynomial} of $\pi$ is the polynomial $G_\pi(X)\in\C[X]$ such that for $N\gg0$,
\[
\dim(\pi^{K_N})=G_\pi(q^{N-1}).
\]
Such a polynomial exists by Corollary~\ref{fixed-poly} and Lemma~\ref{exact-seq}. Denote by $\dim_{GK}(\pi)$ the degree of $G_\pi$.
\end{defn}

Finally, let $\mathrm{rec}_F\colon \mathbf{Irr}(\GL_n(F))\to \mathcal G_n(F)$ be the local Langlands correspondence, where $\mathbf{Irr}(\GL_n(F))$ is the set of irreducible representations of $\GL_n(F)$ and $\mathcal G_n(F)$ is the set of semisimple Deligne representations of the Weil group $\mathcal W_F$ (a close relative to the absolute Galois group $\Gal(\overline F/F)$ of $F$).


\section{Reduction to supercuspidal representations}\label{section-arbitrary}

Notation and terminology in this section mostly follows \cite{BZ2}, so $\nu\colon F^\times\to\C^\times$ is the character $\nu(g)\colonequals|\det(g)|$, and a \emph{segment} is a set of supercuspidal representations of the form $[\rho,\nu^{r-1}\rho]\colonequals\{\rho,\nu\rho,\dots,\nu^{r-1}\rho\}$ for some supercuspidal representation $\rho$ and integer $r>0$. By \cite[Thm~6.1]{BZ2} irreducible representations of $\GL_n$ are classified by \emph{multisegments}---multisets of segments.

A key construction in Bernstein and Zelevinsky's classification is inducing from parabolic subgroups $P\subset G$. Conjugacy classes of $P$ correspond to partitions of $n$. Indeed, a partition $n=n_1+\cdots+n_r$, denoted $\lambda$, corresponds to
\[
P_\lambda\colonequals \begin{pmatrix}
\GL_{n_1}&*&&*\\
&\GL_{n_2}&*\\
&&\ddots&*\\
0&&&\GL_{n_r}
\end{pmatrix}\subset G.
\]

Let $\pi_1,\dots,\pi_r$ be admissible representations of $\GL_{n_1},\dots,\GL_{n_r}$, respectively. Following the notation in \cite{BZ2}, we denote
\[
\pi_1\times\cdots\times\pi_r\colonequals \nInd_{P_\lambda}^G(\pi_1\boxtimes\cdots\boxtimes\pi_r),
\]
where $\pi_1\boxtimes\cdots\boxtimes\pi_r$ is inflated via the surjection $P_\lambda\to\GL_{n_1}\times\cdots\times\GL_{n_r}$, which quotients out the unipotent radical $N_\lambda\subset P_\lambda$.

The following lemma on the size of the double cosets $P_\lambda\backslash G/K_N$ is useful:

\begin{lemma}\label{parabolic-size}
Let $\lambda$ be the partition $n=n_1+\cdots+n_r$. Then,
\[
|P_\lambda\backslash G/K_N|=\begin{bmatrix}n\\ n_1,\cdots,n_r\end{bmatrix}_q(q^{N-1})^{(n^2-n_1^2-\cdots-n_r^2)/2}
\]
for $N>0$, where $\begin{bmatrix}n\\n_1,\dots,n_r\end{bmatrix}_q\colonequals \frac{[n!]_q}{[n_1!]_q\cdots[n_r!]_q}$, and $[n!]_q$ is the $q$-analogue of $n!$, defined in Section~\ref{section-introduction}.
\end{lemma}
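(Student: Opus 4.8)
The plan is to count the double cosets $P_\lambda \backslash G / K_N$ by passing to the reduction mod $\p^N$. Since $K_N = \ker(K_0 \to \GL_n(\sO/\p^N))$, the Iwasawa decomposition $G = P_\lambda K_0$ gives a bijection $P_\lambda \backslash G / K_N \cong (P_\lambda \cap K_0) \backslash K_0 / K_N$, and since $K_0 / K_N \cong \GL_n(\sO/\p^N)$ with $P_\lambda \cap K_0$ mapping onto $P_\lambda(\sO/\p^N)$ (the parabolic over the finite ring $\sO/\p^N$), this equals $|P_\lambda(\sO/\p^N) \backslash \GL_n(\sO/\p^N)|$, the number of points of the partial flag variety $\GL_n / P_\lambda$ over the ring $\sO/\p^N$.

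The key computation is then: first, $|\GL_n(\sO/\p^N) / P_\lambda(\sO/\p^N)| = |\GL_n(\sO/\p^N)| / |P_\lambda(\sO/\p^N)|$, and I would compute each factor by the standard filtration $K_0 \supset K_1 \supset \cdots \supset K_N$. For $\GL_n$, $|\GL_n(\sO/\p^N)| = |\GL_n(k)| \cdot |M_n(k)|^{N-1} = |\GL_n(k)| \cdot q^{n^2(N-1)}$, since each successive quotient $K_i/K_{i+1} \cong M_n(k)$ for $i \ge 1$. Similarly $|P_\lambda(\sO/\p^N)| = |P_\lambda(k)| \cdot q^{d(N-1)}$ where $d = \dim P_\lambda = \sum_i n_i^2 + \sum_{i<j} n_i n_j = \tfrac12(n^2 + \sum_i n_i^2)$. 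Taking the ratio, the finite-field part gives $|\GL_n(k)|/|P_\lambda(k)| = |\GL_n(k)| / \big(\prod_i |\GL_{n_i}(k)| \cdot q^{\sum_{i<j} n_i n_j}\big)$, which is exactly the Gaussian multinomial coefficient $\left[\begin{smallmatrix} n \\ n_1, \dots, n_r \end{smallmatrix}\right]_q$ after the unipotent-radical power cancels against part of $|\GL_n(k)|$'s $q$-power; and the $q^{(N-1)\cdot(\dots)}$ part gives $q^{(N-1)(n^2 - d)} = q^{(N-1)(n^2 - \sum_i n_i^2)/2}$, as claimed.

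I would carry out the steps in this order: (1) establish the bijection $P_\lambda\backslash G/K_N \cong P_\lambda(\sO/\p^N)\backslash\GL_n(\sO/\p^N)$ via Iwasawa and reduction mod $\p^N$; (2) record the order of $\GL_n(\sO/\p^N)$ and of $P_\lambda(\sO/\p^N)$ via the congruence filtration; (3) take the quotient and identify the finite-field factor with the Gaussian multinomial coefficient, using the classical count $|\GL_n(k)| = q^{\binom{n}{2}}\prod_{i=1}^n(q^i-1)$ and the analogous count for the Levi factors. The main obstacle, though entirely routine, is the bookkeeping in step (3): one must check carefully that the power of $q$ contributed by the unipotent radical $N_\lambda$ over $k$ combines correctly with the $q$-powers in $|\GL_n(k)|$ and $\prod_i|\GL_{n_i}(k)|$ so that what remains is precisely $[n!]_q/([n_1!]_q\cdots[n_r!]_q)$ times the stated power of $q^{N-1}$. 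A subtlety worth a sentence is that over the non-field ring $\sO/\p^N$ one still has $|\GL_n(\sO/\p^N)/P_\lambda(\sO/\p^N)| = |\GL_n(\sO/\p^N)|/|P_\lambda(\sO/\p^N)|$ because $P_\lambda$ acts freely on the coset space — this is immediate since $P_\lambda$ is a subgroup — and that $K_0$ surjects onto $\GL_n(\sO/\p^N)$ with $P_\lambda\cap K_0$ surjecting onto $P_\lambda(\sO/\p^N)$ by smoothness of $\GL_n$ and $P_\lambda$ over $\sO$ (Hensel's lemma).
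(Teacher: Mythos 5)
Your proposal is correct and follows essentially the same route as the paper: both reduce via the Iwasawa decomposition $G=P_\lambda K_0$ to counting the partial flag variety $P_\lambda(\sO/\p^N)\backslash\GL_n(\sO/\p^N)$, and both extract the factor $(q^{N-1})^{\dim N_\lambda}$ times the Gaussian multinomial count over the residue field. The only difference is in the last counting step, where you take the ratio $|\GL_n(\sO/\p^N)|/|P_\lambda(\sO/\p^N)|$ computed through the congruence filtration, while the paper exhibits a simply transitive right action of the opposite block-unipotent group with entries in $\p/\p^N$ whose quotient is $P_\lambda(k)\backslash\GL_n(k)$; both are routine and your bookkeeping (including the cancellation of $q^{\binom n2}$ against $\sum_j\binom{n_j}2+\dim N_\lambda$) checks out.
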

\begin{proof}
Since $G=P_\lambda K_0$, we have
\[
P_\lambda\backslash G/K_N\cong P_\lambda(\sO)\backslash K_0/K_N\cong P_\lambda(\sO/\p^N)\backslash\GL_n(\sO/\p^N),
\]
the partial flag variety over $\sO/\p^N$ corresponding to the partition $\lambda$. Now, the subgroup
\[
\begin{pmatrix}
I_{n_1}&0&\cdots&0\\
\p/\p^N&I_{n_2}&&\vdots\\
\vdots&&\ddots&0\\
\p/\p^N&\cdots&\p/\p^N&I_{n_r}
\end{pmatrix}\subset \GL_n(\sO/\p^N)
\]
acts simply on $P_\lambda(\sO/\p^N)\backslash\GL_n(\sO/\p^N)$ from the right, with quotient $P_\lambda(k)\backslash\GL_n(k)$, so
\begin{align*}
|P_\lambda\backslash G/K_N|&=|P_\lambda(k)\backslash\GL_n(k)|\cdot (q^{N-1})^{(n^2-n_1^2-\cdots-n_r^2)/2}\\
&=\begin{bmatrix}n\\ n_1,\cdots,n_r\end{bmatrix}_q(q^{N-1})^{(n^2-n_1^2-\cdots-n_r^2)/2}.
\end{align*}
Here, the last equality is because the quotient $P_\lambda(k)\backslash\GL_n(k)$ is the partial flag variety over $k$, and the projective space $\bP_k^{n-1}$ has size $[n]_q=\frac{q^n-1}{q-1}$.
\end{proof}
\begin{rmk}
Here, $\dim(N_\lambda)=\frac12(n^2-n_1^2-\cdots-n_r^2)$, where $N_\lambda\subset P_\lambda$ is the nilpotent radical.
\end{rmk}

The following Mackey-type formula is also useful, and will be used repeatedly throughout the paper:

\begin{lemma}\label{fixed}
Let $G$ be a locally profinite group, $H\subset G$ a closed subgroup, and $K\subset G$ a compact open subgroup. Let $S\subset G$ be a set of representatives for $H\backslash G/K$. Then, for a smooth representation $\lambda$ of $H$,
\begin{align*}
    (\cInd_H^G\lambda)^K&\xrightarrow{\sim}\bigoplus_{g\in S}\lambda^{H\cap gKg^{-1}}\\
    f&\mapsto (f(g))_{g\in S}.
\end{align*}
\end{lemma}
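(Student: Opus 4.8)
The plan is to unwind the definition of compact induction and observe that a $K$-fixed function in $\cInd_H^G\lambda$ is rigidly determined by its values on the chosen representatives $S$, the only constraint being the evident fixed-vector condition.

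Recall that $\cInd_H^G\lambda$ consists of the functions $f\colon G\to V$ (writing $(\lambda,V)$ for the given representation) with $f(hg)=\lambda(h)f(g)$ for all $h\in H$ and $g\in G$, which are fixed under right translation by some compact open subgroup, and whose support is compact modulo $H$. First I would note that if such an $f$ is $K$-fixed then, for $x=hgk$ with $h\in H$, $g\in S$, $k\in K$, one has $f(x)=\lambda(h)f(gk)=\lambda(h)f(g)$; hence $f$ is completely determined by $(f(g))_{g\in S}$, which gives injectivity of the map. Moreover only finitely many $f(g)$ are nonzero: the image of $\supp f$ in $H\backslash G$ is both compact (by the support condition) and open (since $f$ is locally constant), and it meets each of the disjoint open sets $H\backslash HgK$, so it lies in finitely many of them; thus the map genuinely lands in the direct sum.

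Next I would pin down the image and construct the inverse. For $g\in S$ and $u\in H\cap gKg^{-1}$, write $u=gkg^{-1}$ with $k\in K$, so $ug=gk$ and therefore $\lambda(u)f(g)=f(ug)=f(gk)=f(g)$; thus $f(g)\in V^{H\cap gKg^{-1}}$. Conversely, given $(v_g)_{g\in S}$ with $v_g\in V^{H\cap gKg^{-1}}$ and almost all zero, set $f(x)\colonequals\lambda(h)v_g$ whenever $x=hgk$ with $g\in S$. The crucial point is well-definedness: if $hgk=h'gk'$ then $g^{-1}(h'^{-1}h)g=k'k^{-1}\in K$, so $u\colonequals h'^{-1}h$ lies in $H\cap gKg^{-1}$ and, $v_g$ being $u$-fixed, $\lambda(h)v_g=\lambda(h')\lambda(u)v_g=\lambda(h')v_g$ --- exactly the reverse of the previous computation. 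One then checks the routine conditions: $f(hx)=\lambda(h)f(x)$ by construction; $f(xk'')=f(x)$ for $k''\in K$, so $f$ is smooth and $K$-fixed; and $\supp f\subseteq\bigcup_{g:\,v_g\neq 0}HgK$ is a finite union of sets of the form $H\cdot(\text{compact})$, hence compact modulo $H$. Evaluating the composite at $x=1\cdot g\cdot 1$ recovers $v_g$, so the two assignments are mutually inverse.

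The content is entirely formal; the only point requiring care is that well-definedness of the inverse uses the fixed-vector hypothesis in precisely the right way, via $g^{-1}(H\cap gKg^{-1})g\subseteq K$, and that the support/finiteness condition is matched to the direct sum. I do not anticipate any genuine obstacle beyond this bookkeeping.
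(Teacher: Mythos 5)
Your argument is correct and follows essentially the same route as the paper's proof: check that values at representatives land in $\lambda^{H\cap gKg^{-1}}$ with only finitely many nonzero (via compactness of the support modulo $H$ and openness of the double cosets), then construct the inverse by $hgk\mapsto\lambda(h)v_g$ and verify well-definedness from $g^{-1}(H\cap gKg^{-1})g\subseteq K$ together with the support condition. No gaps.
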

\begin{proof}
We will first show well-definedness. Let $f\in(\cInd_H^G\lambda)^K$. For each $g\in S$ and $h\in H\cap gKg^{-1}$, we have:
\[
f(g)=f(h\cdot g\cdot g^{-1}h^{-1}g)=\lambda(h)f(g),
\]
so $f(g)\in\lambda^{H\cap gKg^{-1}}$. Moreover, there is a subset $T\subset S$ such that $\supp f=\sqcup_{g\in T}HgK$, where each $HgK$ is open (it is a union of $K$-cosets). Thus, \[H\backslash\supp f=\bigsqcup_{g\in T}H\backslash HgK,\] so since $H\backslash\supp f$ is compact, $T\subset S$ must be finite. That is, the image lies in the direct sum.

We will construct an inverse. Let $\mathbf a=(a_g)_{g\in S}\in\bigoplus_{g\in S}\lambda^{H\cap gKg^{-1}}$ be arbitrary. For any $u\in G=HSK$, there is a $h\in H$, $g\in S$, and $k\in K$ such that $u=hgk$. Let $f_{\mathbf a}(u)\colonequals \lambda(h)a_g$. We claim $f_{\mathbf a}\colon G\to\lambda$ is well-defined. Indeed, if $u=hgk=h'gk'$, then
\[
\lambda(h)a_g=\lambda(h')\lambda((h')^{-1}h)a_g=\lambda(h')a_g,
\]
since $(h')^{-1}h=gk'k^{-1}g^{-1}\in H\cap gKg^{-1}$. Now, $f_{\mathbf a}\in(\Ind_H^G\lambda)^K$ is clear. Finally, $f_{\mathbf a}$ is compactly supported modulo $H$, since
\[
H\backslash \supp f=\bigcup_{g\in T}H\backslash HgK
\]
where $T=\{g\in S:a_g\ne 0\}$ is finite, and each $H\backslash HgK$ is the continuous image of the compact set $K$.
\end{proof}

Now, the growth polynomials (defined in Section~\ref{section-terms}) of induced representations of the form $\pi_1\times\cdots\times\pi_r$ are readily calculated:

\begin{lemma}\label{induced-dim}
Let $n=n_1+\cdots+n_r$ be a partition and let $\pi_1,\dots,\pi_r$ be admissible representations of $\GL_{n_1},\dots,\GL_{n_r}$, respectively. Then, 
\[
G_{\pi_1\times\cdots\times\pi_r}(X)=\begin{bmatrix}n\\ n_1,\cdots,n_r\end{bmatrix}_qX^{(n^2-n_1^2-\cdots-n_r^2)/2}\prod_{i=1}^rG_{\pi_i}(X),
\]
so in particular,
\[
\dim_{GK}(\pi_1\times\cdots\times\pi_r)=\frac12(n^2-n_1^2-\cdots-n_r^2)+\sum_{i=1}^r\dim_{GK}(\pi_i).
\]
\end{lemma}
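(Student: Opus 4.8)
The plan is to reduce the computation of $\dim((\pi_1\times\cdots\times\pi_r)^{K_N})$ to the Mackey-type decomposition of Lemma~\ref{fixed} together with the double-coset count of Lemma~\ref{parabolic-size}. Write $G=\GL_n(F)$, $P=P_\lambda$, and let $\sigma=\pi_1\boxtimes\cdots\boxtimes\pi_r$ inflated to $P$. Since $G/P$ is compact, normalized parabolic induction $\nInd_P^G$ agrees with compact induction $\cInd_P^G$ up to the twist by $\delta_P^{1/2}$ (which only changes the representation of $P$, not the underlying space of the induced module). Hence Lemma~\ref{fixed} applies with $H=P$, $K=K_N$: choosing a set $S$ of representatives for $P\backslash G/K_N$, we get
\[
(\pi_1\times\cdots\times\pi_r)^{K_N}\;\cong\;\bigoplus_{g\in S}\sigma^{P\cap gK_Ng^{-1}}.
\]
So the dimension is a sum over $|P\backslash G/K_N|$ terms, each of which is the dimension of the $(P\cap gK_Ng^{-1})$-fixed vectors in $\sigma$. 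By Lemma~\ref{parabolic-size} we already know $|S| = \begin{bmatrix}n\\ n_1,\cdots,n_r\end{bmatrix}_q(q^{N-1})^{(n^2-n_1^2-\cdots-n_r^2)/2}$, so it remains to show that each summand equals $\prod_i G_{\pi_i}(q^{N-1})=\prod_i\dim(\pi_i^{K_{N_i}'})$ for an appropriate congruence subgroup, at least for $N$ large.

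The key step is therefore analyzing $P\cap gK_Ng^{-1}$ for the representatives $g$. Following the proof of Lemma~\ref{parabolic-size}, one can take the representatives $g$ inside $K_0=\GL_n(\sO)$; in fact since $P(\sO)\backslash K_0/K_N \cong P(\sO/\p^N)\backslash\GL_n(\sO/\p^N)$ and the lower-triangular unipotent piece acts freely, the representatives can be chosen in the (finite) set lifting $P(k)\backslash\GL_n(k)$ combined with the unipotent coset reps. For such $g\in K_0$, I claim $P\cap gK_Ng^{-1}$ projects onto $K_{N}^{(i)}:=1+M_{n_i}(\p^N)\subset\GL_{n_i}(\sO)$ in each block factor, with the unipotent radical $N_\lambda\cap gK_Ng^{-1}$ going along for the ride. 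Concretely: conjugating $K_N$ — which is a normal subgroup of $K_0$ — by $g\in K_0$ gives $gK_Ng^{-1}=K_N$ again, so $P\cap gK_Ng^{-1}=P\cap K_N = P(\sO)\cap K_N$, which is exactly $\{p\in P(\sO): p\equiv 1\pmod{\p^N}\}$, independent of $g$. This is the congruence subgroup of $P(\sO)$ of level $N$, which maps onto $\prod_i (1+M_{n_i}(\p^N))$ with kernel $N_\lambda(\p^N)$ a pro-$p$ group acting trivially on the inflated $\sigma$. Therefore $\sigma^{P\cap gK_Ng^{-1}} = \bigotimes_i \pi_i^{1+M_{n_i}(\p^N)} = \bigotimes_i \pi_i^{K_N}$ (level-$N$ congruence subgroup of $\GL_{n_i}(\sO)$), whose dimension is $\prod_i G_{\pi_i}(q^{N-1})$ for $N$ large by the definition of the growth polynomial applied to each $\pi_i$.

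Assembling: for $N\gg 0$,
\[
\dim((\pi_1\times\cdots\times\pi_r)^{K_N})
=|P\backslash G/K_N|\cdot\prod_{i=1}^r G_{\pi_i}(q^{N-1})
=\begin{bmatrix}n\\ n_1,\cdots,n_r\end{bmatrix}_q(q^{N-1})^{(n^2-n_1^2-\cdots-n_r^2)/2}\prod_{i=1}^r G_{\pi_i}(q^{N-1}),
\]
which is the asserted polynomial identity in $X=q^{N-1}$, and taking degrees gives the formula for $\dim_{GK}$. The main obstacle I anticipate is the bookkeeping in identifying $P\cap gK_Ng^{-1}$ correctly and checking that the unipotent-radical part contributes nothing — i.e. verifying cleanly that $K_N\trianglelefteq K_0$ so the conjugation is harmless, and that the inflation of $\sigma$ really is fixed by $N_\lambda(\p^N)$. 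Once that normality observation is in place, the rest is just combining Lemmas~\ref{parabolic-size} and~\ref{fixed} and invoking the definition of $G_{\pi_i}$; care is only needed to ensure "$N$ large" is uniform, which follows since there are finitely many factors.
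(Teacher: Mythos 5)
Your proposal is correct and follows essentially the same route as the paper: apply the Mackey-type formula (Lemma~\ref{fixed}) with representatives of $P\backslash G/K_N$ chosen in $K_0$ via the Iwasawa decomposition, use normality of $K_N$ in $K_0$ to reduce each summand to $(\pi_1\boxtimes\cdots\boxtimes\pi_r)^{P\cap K_N}=\pi_1^{K_N}\otimes\cdots\otimes\pi_r^{K_N}$ (the unipotent radical acting trivially), and conclude with the double-coset count of Lemma~\ref{parabolic-size}. The paper dismisses the normalization because $\delta_G^{1/2}\delta_P^{-1/2}$ is trivial on the relevant compact subgroups, which is the same point you make about the twist not affecting fixed-vector dimensions.
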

\begin{proof}
Let $\pi=\pi_1\times\cdots\times\pi_r$. The normalization may be ignored since the character $\delta_G^{1/2}\delta_P^{-1/2}$ is trivial on a sufficiently small neighborhood of $P$. By Lemma~\ref{fixed} we have
\[
\rho^{K_N}=\bigoplus_{g\in P\backslash G/K_N}(\rho_1\boxtimes\cdots\boxtimes\rho_r)^{P(F)\cap gK_Ng^{-1}}.
\]
Now, by Iwasawa decomposition $G=BK_0=PK_0$, so the representatives $g\in P\backslash G/K_N$ can be chosen so that $g\in K_0$. Then, $gK_Ng^{-1}=K_N$, so
\begin{align*}
\pi^{K_N}&=\big((\pi_1\boxtimes\cdots\boxtimes\pi_r)^{P(F)\cap K_N}\big)^{\oplus|P\backslash G/K_N|}\\
&=\big((\pi_1\boxtimes\cdots\boxtimes\pi_r)^{K_N\times\cdots\times K_N}\big)^{\oplus|P\backslash G/K_N|}\\
&=\big(\pi_1^{K_N}\otimes\cdots\otimes\pi_r^{K_N}\big)^{\oplus|P\backslash G/K_N|}.
\end{align*}
Now the result follows from Lemma~\ref{parabolic-size}.
\end{proof}

Moreover, growth polynomials behave well with short exact sequences:

\begin{lemma}\label{exact-seq}
Let
\[
0\to\pi\to\phi\to\sigma\to0
\]
be an exact sequence of admissible representations of $G$. Then,
\[
G_\phi(X)=G_\pi(X)+G_\sigma(X),
\]
so in particular, $\dim_{GK}(\phi)=\max\{\dim_{GK}(\pi),\dim_{GK}(\sigma)\}$.
\end{lemma}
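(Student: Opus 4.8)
The plan is to reduce the statement to the exactness of the functor $V\mapsto V^{K_N}$ on smooth complex representations. Recall that for a compact open subgroup $K$ of a locally profinite group and a smooth representation $(\tau,V)$ over $\C$, the projection onto $V^K$ is realized by the averaging idempotent $e_K\in\mathrm{End}_\C(V)$, $e_K v=\frac{1}{\mathrm{vol}(K)}\int_K\tau(k)v\,dk$; the integral is a finite sum because $v$ is fixed by some open subgroup of $K$, and $e_K$ is natural in $\tau$ with image exactly $V^K$. Hence $V\mapsto V^K$ is a direct summand of the identity functor, and in particular exact. Applying this with $K=K_N$ to the given sequence produces, for each $N\ge 0$, a short exact sequence
\[
0\to\pi^{K_N}\to\phi^{K_N}\to\sigma^{K_N}\to 0,
\]
and since all three spaces are finite-dimensional by admissibility, $\dim\phi^{K_N}=\dim\pi^{K_N}+\dim\sigma^{K_N}$ for every $N$.

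Next I would promote this numerical identity to the claimed identity of polynomials. Granting that $\pi$ and $\sigma$ have growth polynomials (by Corollary~\ref{fixed-poly}, or by an induction on length using this very lemma), choose $N_0$ so that $\dim\pi^{K_N}=G_\pi(q^{N-1})$ and $\dim\sigma^{K_N}=G_\sigma(q^{N-1})$ for all $N\ge N_0$. Then $\dim\phi^{K_N}=G_\pi(q^{N-1})+G_\sigma(q^{N-1})$ for $N\ge N_0$, and since the numbers $q^{N-1}$ for $N\ge N_0$ are infinitely many distinct values, the interpolating polynomial is unique; this shows simultaneously that $\phi$ has a growth polynomial and that $G_\phi=G_\pi+G_\sigma$.

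For the last assertion I would observe that $\deg(G_\pi+G_\sigma)=\max\{\deg G_\pi,\deg G_\sigma\}$, because there is no cancellation of leading terms: for any nonzero admissible $\tau$ the values $G_\tau(q^{N-1})=\dim\tau^{K_N}$ are nonnegative and unbounded as $N\to\infty$ (the $K_N$ form a neighborhood basis of $1$), so $G_\tau$ has strictly positive leading coefficient; thus if $\pi,\sigma$ are both nonzero their leading coefficients add without cancellation, and if one of them is zero the claim is immediate. This yields $\dim_{GK}(\phi)=\max\{\dim_{GK}(\pi),\dim_{GK}(\sigma)\}$.

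The argument is entirely formal, so I expect no serious obstacle; the only points deserving care are the well-definedness of the averaging idempotent $e_{K_N}$ (equivalently, exactness of $K_N$-invariants) and the positivity of the leading coefficient of a growth polynomial, which is what prevents a degree drop in $G_\pi+G_\sigma$.
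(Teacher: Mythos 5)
Your proof is correct and is essentially the paper's: the paper simply cites the exactness of $(-)^{K_N}$ (Bushnell--Henniart, Section~2.3, Corollary~1), which you prove directly via the averaging idempotent before carrying out the routine polynomial bookkeeping. The only slip is your claim that $\dim\tau^{K_N}$ is unbounded for every nonzero admissible $\tau$ --- this fails for finite-dimensional $\tau$ such as a character --- but all you need is $\dim\tau^{K_N}\ge 1$ for $N$ large (smoothness of a nonzero vector), which already forces the leading coefficient of $G_\tau$ to be positive, so your degree argument stands.
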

\begin{proof}
Follows immediately from \cite[Section~2.3]{Bush-Henn}~Corollary~1, that $(-)^{K_N}$ is an exact functor.
\end{proof}

\begin{rmk}
A nice formalization of Lemma~\ref{induced-dim} and Lemma~\ref{exact-seq}, together with Corollary~\ref{fixed-poly} is the following: the map from the Grothendieck ring of representations of $\GL$
\begin{align*}
    \mathscr I\colon\mathcal R&\to\C[X,X^{-1}]\\
    \pi\in\mathrm{Rep}(\GL_n)&\mapsto [n!]_q^{-1}X^{-\frac{n^2-n}2}G_\pi(X)
\end{align*}
is a well-defined ring homomorphism. That is, $\mathscr I$ normalizes $\dim(\pi^{K_N})$ by the size of the full flag variety over $\sO/\p^N$.
\end{rmk}



\subsection{The growth polynomial of $\langle\Delta\rangle$}\label{subsection-delta} We calculate the growth polynomial for representations of the form $\langle\Delta\rangle$ for some segment $\Delta$, which is of particular importance since they generate the Grothendieck ring $\mathcal R$ \cite[Cor~7.5]{BZ2}. Thus, growth polynomial for arbitrary representations can be deduced from that of $\langle\Delta\rangle$, via Lemma~\ref{induced-dim} and Lemma~\ref{exact-seq}, as carried out in the next subsection.

We first review the theory of Bernstein-Zelevinsky derivatives, as defined in \cite[Section~3.5]{BZ1}:

\begin{defn}
Let $M_n\subset\GL_n$ be the \emph{mirabolic subgroup}, the group of matrices whose last row is $(0,\cdots,0,1)$. For an admissible representation $(\tau,U)$ of $M_n$ and $0<i\le n$, define the \emph{$i$-th derivative} $\tau^{(i)}$ of $\tau$, which is a representation of $\GL_{n-i}$, as
\[
\tau^{(k)}\colonequals U/(\theta_i(h)v-\pi(h)v:v\in U,h\in N_i),
\]
where $N_i$ consists of elements of the form
\[
h=\left(\begin{array}{@{}c|c@{}}
\begin{matrix}\begin{matrix}
1 \\
&\ddots &\\
&& 1
\end{matrix}\\ \hline
\begin{matrix}
&\\
&\\
&
\end{matrix}\end{matrix} &
\begin{matrix}
* & * &*\\
\vdots & \vdots&\vdots\\
a_{n-i}&*&*\\
1 & a_{n-i+1}& *\\
&\ddots&a_{n-1}\\
 && 1
\end{matrix}
\end{array}\right)
\]
and $\theta_i(h)\colonequals\psi(a_{n-i+1}+\cdots+a_{n-1})$, where $\psi$ is the fixed character of $F$ as defined in Section~\ref{section-terms}. Here, the top-left block has size $(n-i)\times(n-i)$, and the bottom-right block has size $i\times i$. Moreover, for representations of $\GL_n$ define the $i$-th derivative as the $i$-th derivative of the restriction to $M_n\subset\GL_n$.
\end{defn}
\begin{defn}
Let $\pi$ be a representation of $\GL_n(F)$. Then, let the \emph{total derivative} be the formal sum in the Grothendieck ring
\[
\mathscr{D}(\pi)\colonequals\pi+\pi^{(1)}+\cdots+\pi^{(n)}\in\mathcal R.
\]
Extending $\mathscr D$ to $\mathcal R$ by linearity defines a ring homomorphism $\mathcal R\to\mathcal R$, by \cite[Lem~4.5]{BZ1}.
\end{defn}
Now, the following calculations of total derivatives are of particular interest to us:
\begin{lemma}[Bernstein-Zelevinsky~{\cite[Cor~4.14~(a)]{BZ1}}]\label{der-calc1}
Let $\pi$ be a representation of $\GL_m$, and let $\varphi$ be a representation of the parabolic subgroup $P_{(n-1)+1}\subset\GL_n$. Then, the derivatives of
\[
\sigma\colonequals\nInd_{P_{m+(n-1)+1}}^{P_{(m+n-1)+1}}(\pi\boxtimes\varphi)
\]
are given by, for $i>0$,
\[
\sigma^{(i)}=\pi\times\varphi^{(i)}.
\]
\end{lemma}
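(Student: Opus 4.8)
The plan is to reduce the statement to the Leibniz-type rule for the composition of the Bernstein–Zelevinsky functors. Recall that the $i$-th derivative is built from the composite of the "restriction to the mirabolic, then twist by the Whittaker character $\theta_i$, then take $N_i$-coinvariants" operations, which in \cite{BZ1} are packaged into the four exact functors $\Phi^\pm, \Psi^\pm$ relating representations of $M_n$ and of $M_{n-1}$ (respectively $\GL_{n-1}$). The representation $\sigma = \nInd_{P_{m+(n-1)+1}}^{P_{(m+n-1)+1}}(\pi\boxtimes\varphi)$ lives on the mirabolic $M_{m+n}$, and it is obtained from the $M_n$-representation $\varphi$ by first parabolically inducing against $\pi$ on the $\GL_m$-block sitting on the far left. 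So I would first make precise, using the transitivity of parabolic induction, that the operation $\varphi \mapsto \pi \times (-)$ (placing $\pi$ on the left) commutes appropriately with the mirabolic structure on the right-hand $n$ coordinates.

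The key step is then the compatibility: if we filter $\sigma$ by the geometric lemma / Bernstein–Zelevinsky filtration coming from $M_n$-orbits on the relevant flag variety, the functor $\pi \times (-)$ interacts with $\Phi^-$ and $\Psi^-$ so that applying the $i$-th derivative functor to $\pi\times\varphi$ (for $i>0$, so that the derivative only "sees" the last $n$ coordinates, where $\varphi$ lives, and not the $\GL_m$-block) produces exactly $\pi \times \varphi^{(i)}$. Concretely, I would invoke the general Leibniz formula for $\mathscr D$ applied to a product — but here, since the $i$-th derivative with $i>0$ strips letters only from the right-hand end, all the cross terms that would involve derivatives of $\pi$ do not appear, because the mirabolic of $\GL_{m+n}$ normalizes the block structure in a way that keeps $\pi$ untouched. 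The cleanest route is to quote \cite[Cor~4.14~(a)]{BZ1} itself in the form: derivatives commute with left multiplication by a fixed $\GL_m$-representation, i.e. $(\pi\times\varphi)^{(i)} = \pi\times\varphi^{(i)}$ for $i>0$, which is exactly what the lemma asserts, so in the write-up I would simply cite this and spell out why the hypotheses ($\varphi$ a $P_{(n-1)+1}$-representation, $i>0$) are what make the $\pi$-factor inert.

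The main obstacle is bookkeeping the semidirect-product / mirabolic combinatorics precisely enough to be sure that no "boundary" cross term between the $\GL_m$-block and the derivative direction survives; this is exactly the subtlety that forces the hypothesis $i>0$ (for $i=0$ one just gets $\sigma$ itself, and the formula would read $\pi\times\varphi$, consistent with viewing $\sigma$ as $\nInd$ of $\pi\boxtimes\varphi$). Since the paper is content to cite \cite[Cor~4.14~(a)]{BZ1} for the statement, the "proof" here is essentially a pointer to that corollary together with the observation that the parabolic $P_{m+(n-1)+1}\subset P_{(m+n-1)+1}$ is set up precisely so that the left $\GL_m$-block plays the role of the passive factor in the Leibniz rule. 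I would therefore present the proof as: unwind the definition of $\sigma$ via transitivity of induction, observe it is of the form to which \cite[Cor~4.14~(a)]{BZ1} applies, and conclude.
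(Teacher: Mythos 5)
Your proposal is correct and matches the paper's treatment: the paper gives no argument for this lemma beyond the attribution to \cite[Cor~4.14~(a)]{BZ1}, and your write-up likewise reduces to citing that corollary, with a reasonable sketch of why the hypotheses (induction within the mirabolic/parabolic setting, $\varphi$ a $P_{(n-1)+1}$-representation, $i>0$) make the left $\GL_m$-factor inert.
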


\begin{lemma}[Zelevinsky~{\cite[Thm~3.5]{BZ2}}]\label{der-calc2}
Let $\Delta=[\rho,\nu^{r-1}\rho]$ be a segment, where $\rho$ is a supercuspidal representation. Then,
\[
\mathscr{D}(\langle\Delta\rangle)=\langle\Delta\rangle+\langle\Delta^-\rangle,
\]
where $\Delta^-\colonequals[\rho,\nu^{r-2}\rho]$.
\end{lemma}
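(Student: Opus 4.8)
The plan is to induct on the length $r$ of $\Delta$, reducing everything to the supercuspidal case. Write $\rho$ for a supercuspidal representation of $\GL_d$, so $\langle\Delta\rangle$ is a representation of $\GL_{rd}$ and $\langle\Delta\rangle^{(k)}$ lies in the $\GL_{rd-k}$-graded piece of $\mathcal R$. Since $\langle\Delta\rangle^{(0)}=\langle\Delta\rangle$ and $\langle\Delta^-\rangle$ sits in the $\GL_{(r-1)d}$-piece, comparing graded pieces shows the asserted identity $\mathscr D(\langle\Delta\rangle)=\langle\Delta\rangle+\langle\Delta^-\rangle$ is equivalent to
\[
\langle\Delta\rangle^{(d)}=\langle\Delta^-\rangle,\qquad \langle\Delta\rangle^{(k)}=0\ \text{ for all }k\ge1\text{ with }k\ne d.
\]
For $r=1$ this is the standard computation of derivatives of a supercuspidal (\cite{BZ1}): for $0<k<d$, $\rho^{(k)}$ is built from the Jacquet module of $\rho$ along a proper parabolic, hence vanishes by supercuspidality; $\rho^{(d)}$ is the full twisted Jacquet module along the maximal unipotent, i.e.\ the one-dimensional, nonzero Whittaker space (supercuspidals are generic); and there are no derivatives past the $d$-th. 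Thus $\rho^{(d)}=\langle\emptyset\rangle$, the unit of $\mathcal R$, which is $\langle\Delta^-\rangle$ when $r=1$.

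For the inductive step, the mechanism I would use is Lemma~\ref{der-calc1} together with the following structural description of $\langle\Delta\rangle$ restricted to the mirabolic $M_{rd}\subset\GL_{rd}$:
\[
\langle\Delta\rangle|_{M_{rd}}\;\cong\;\nInd_{P_{(r-1)d+(d-1)+1}}^{M_{rd}}\bigl(\langle\Delta^-\rangle\boxtimes(\nu^{r-1}\rho|_{M_d})\bigr),
\]
i.e.\ $\langle\Delta\rangle|_{M_{rd}}$ is induced from the top $\GL_{(r-1)d}$-block carrying $\langle\Delta^-\rangle$ together with the mirabolic $M_d$ of the last block carrying the supercuspidal $\nu^{r-1}\rho$. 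Granting this, Lemma~\ref{der-calc1} (with $m=(r-1)d$, $\pi=\langle\Delta^-\rangle$, $\varphi=\nu^{r-1}\rho|_{M_d}$) gives $\langle\Delta\rangle^{(k)}=\langle\Delta^-\rangle\times(\nu^{r-1}\rho)^{(k)}$ for every $k>0$; applying the $r=1$ case to the supercuspidal $\nu^{r-1}\rho$ of $\GL_d$, the right-hand side is $0$ for $0<k<d$, equals $\langle\Delta^-\rangle\times\langle\emptyset\rangle=\langle\Delta^-\rangle$ for $k=d$, and is $0$ for $k>d$. This is exactly the displayed equivalent form, so the lemma follows.

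It remains to establish the structural isomorphism, which is where the real work (and the induction) lies. I would start from Zelevinsky's realization of $\langle\Delta\rangle$ as the unique irreducible submodule of $\langle\Delta^-\rangle\times\nu^{r-1}\rho$ (whose only other composition factor is $\langle\{\Delta^-,[\nu^{r-1}\rho]\}\rangle$, since $\Delta^-$ and $[\nu^{r-1}\rho]$ are linked and disjoint, \cite{BZ2}), restrict to $M_{rd}$, and apply the Bernstein--Zelevinsky geometric lemma for restriction to the mirabolic: $(\langle\Delta^-\rangle\times\nu^{r-1}\rho)|_{M_{rd}}$ is a two-step extension whose ``open-cell'' layer is precisely the right-hand side of the display above, and whose ``closed-cell'' layer has the companion shape to Lemma~\ref{der-calc1} (involving the mirabolic $M_{(r-1)d}$ of the first block). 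One then shows that the submodule $\langle\Delta\rangle|_{M_{rd}}$ is carried isomorphically onto the open cell: using exactness of the derivative functors, the inductive hypothesis for the shorter segment $\Delta^-$, and the $r=1$ case for $\nu^{r-1}\rho$, one computes the derivatives of both layers and finds that the closed cell shares no composition factor with the open cell; this rigidity, combined with the fact that $\langle\Delta\rangle$ is a proper submodule of the length-two representation $\langle\Delta^-\rangle\times\nu^{r-1}\rho$, pins down $\langle\Delta\rangle|_{M_{rd}}$.

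The main obstacle is exactly this last identification. Derivatives only record composition-series data, so ruling out any ``closed-cell contribution'' to the submodule $\langle\Delta\rangle|_{M_{rd}}$ must be done carefully, juggling the geometric-lemma filtration, the socle characterization of the Zelevinsky representation, and the inductive control of $\langle\Delta^-\rangle$ at once. I note that the superficially more economical route---working entirely in $\mathcal R$ via the exact sequence $0\to\langle\Delta\rangle\to\langle\Delta^-\rangle\times\nu^{r-1}\rho\to\langle\{\Delta^-,[\nu^{r-1}\rho]\}\rangle\to0$, applying $\mathscr D$, and solving for $\mathscr D(\langle\Delta\rangle)$---does not close the induction, because $\{\Delta^-,[\nu^{r-1}\rho]\}$ is a genuine multisegment while the statement being proved applies only to single segments; carrying it through would amount to first proving Zelevinsky's general formula for $\mathscr D(\langle a\rangle)$, which is strictly harder.
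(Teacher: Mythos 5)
The paper offers no proof of this lemma at all: it is imported verbatim from Zelevinsky (\cite[Thm~3.5]{BZ2}), so your attempt has to stand on its own, and it does not quite close. Your reduction to the statement on individual derivatives, the base case $r=1$, and the observation that Lemma~\ref{der-calc1} finishes the induction once one has an $M_{rd}$-isomorphism $\langle\Delta\rangle|_{M_{rd}}\cong\nInd\bigl(\langle\Delta^-\rangle\boxtimes(\nu^{r-1}\rho|_{M_d})\bigr)$ are all correct. But that isomorphism is exactly the map $i$ constructed in the paper's Proposition~\ref{delta-dim}, and there it is proved \emph{using} the present lemma: injectivity rests on the irreducibility of $\langle\Delta\rangle|_{M_n}$ together with a multiplicity count in $n_1$-th derivatives, and surjectivity is obtained by computing the total derivative of the cokernel, all of which presuppose $\mathscr{D}(\langle\Delta\rangle)=\langle\Delta\rangle+\langle\Delta^-\rangle$. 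Your plan runs this implication backwards, so the whole weight of the proof falls on an independent derivation of the structural isomorphism --- precisely the step you leave as a sketch and yourself flag as the main obstacle.

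That sketch, as written, does not close the gap. Knowing that the two layers of the mirabolic filtration of $(\langle\Delta^-\rangle\times\nu^{r-1}\rho)|_{M_{rd}}$ share no composition factors does not identify where the $G$-submodule $\langle\Delta\rangle$ goes; to pin down $\langle\Delta\rangle|_{M_{rd}}$ you still need either its irreducibility as an $M_{rd}$-module (the fact the paper quotes from \cite[Rmk~3.6, Cor~7.9]{BZ2}, which in Zelevinsky's development is derived from the very derivative formula you are proving, via the structure theory of mirabolic representations), or the length-two composition series of $\langle\Delta^-\rangle\times\nu^{r-1}\rho$ (also established in \cite{BZ2} only after Theorem 3.5), plus a nonvanishing statement for the composite $\langle\Delta\rangle\hookrightarrow\langle\Delta^-\rangle\times\nu^{r-1}\rho\to\sigma$. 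Note also that in the paper's realization the layer you call the open cell is cut out by restricting functions to $P_{(n-1)+1}$, i.e.\ it is a \emph{quotient} of the restriction, so your claim that the submodule is ``carried isomorphically onto'' it requires exactly such an injectivity input. As it stands, the proposal either imports facts that make it circular relative to the source being reproved or stops short at the hard step; it is an accurate account of how the lemma feeds into Proposition~\ref{delta-dim}, but not yet a proof of the lemma.
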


Now, we calculate the growth polynomial for representations of the form $\langle\Delta\rangle$ for some segment $\Delta$, heavily utilizing the fact that they remain irreducible when restricted to $M_n$ \cite[Rmk~3.6]{BZ2}:

\begin{prop}\label{delta-dim}
Let $\rho$ be a supercuspidal representation of $\GL_{n_1}(F)$, and let $\Delta=[\rho,\nu^{r-1}\rho]$, so $\langle\Delta\rangle$ is a representation of $\GL_n(F)$, with $n=n_1r$. Then,
\begin{align*}
G_{\langle\Delta\rangle}(X)&=[r!]_{q^{n_1}}^{-1}\begin{bmatrix}n\\n_1,\dots,n_1\end{bmatrix}_qX^{\frac{n_1(n_1-1)r(r-1)}2}G_\rho(X)^r\\
&=\frac{[n!]_q}{[r!]_{q^{n_1}}}X^{\frac{n(n-r)}2}+(\text{lower order terms}),
\end{align*}
so in particular,
\[
\dim_{GK}\langle\Delta\rangle=\frac{n(n-r)}2.
\]
\end{prop}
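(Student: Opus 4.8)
The plan is to induct on $r$, the base case $r=1$ being the tautology $\langle\Delta\rangle=\rho$, for which the asserted formula reads $G_\rho(X)=\begin{bmatrix}n_1\\n_1\end{bmatrix}_qX^0G_\rho(X)$. So fix $r\ge 2$ and assume the formula for all supercuspidal representations and all lengths $<r$ (in every congruence situation).

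The structural input is that $\langle\Delta\rangle$ stays irreducible on restriction to the mirabolic $M_n$ \cite[Rmk~3.6]{BZ2}, together with Lemma~\ref{der-calc2}: the latter forces $\langle\Delta\rangle^{(i)}=0$ for $0<i<n_1$ and for $i>n_1$, while $\langle\Delta\rangle^{(n_1)}=\langle\Delta^-\rangle$, a representation of $\GL_{n-n_1}$. Feeding this into the Bernstein--Zelevinsky filtration of $\langle\Delta\rangle|_{M_n}$ from \cite{BZ1} (whose $i$-th layer from the top is $(\Phi^+)^{i-1}\Psi^+(\langle\Delta\rangle^{(i)})$), the filtration collapses to a single nonzero layer, so $\langle\Delta\rangle|_{M_n}\cong(\Phi^+)^{n_1-1}\Psi^+(\langle\Delta^-\rangle)$. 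Globally I would realize $\langle\Delta\rangle$ as the unique irreducible submodule of the length-two standard module $\langle\Delta^-\rangle\times\nu^{r-1}\rho=\nInd_P^{\GL_n}(\langle\Delta^-\rangle\boxtimes\nu^{r-1}\rho)$, so that there is a short exact sequence $0\to\langle\Delta\rangle\to\langle\Delta^-\rangle\times\nu^{r-1}\rho\to\langle\{\Delta^-,[\nu^{r-1}\rho]\}\rangle\to0$.

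The heart of the proof is to extract $\dim(\langle\Delta\rangle^{K_N})$ from this. One cannot argue by leading terms alone: the quotient constituent $\langle\{\Delta^-,[\nu^{r-1}\rho]\}\rangle$ has strictly larger Gelfand--Kirillov dimension than $\langle\Delta\rangle$, so Lemma~\ref{exact-seq} does not by itself isolate $G_{\langle\Delta\rangle}$. Instead I would restrict the exact sequence to $M_n$ and track the Bernstein--Zelevinsky filtrations of all three terms (using Lemma~\ref{der-calc1}, which computes the derivatives of $\langle\Delta^-\rangle\times\nu^{r-1}\rho$) to identify precisely which layer of the fixed-vector space of the standard module dies in the quotient; Lemma~\ref{fixed} then converts the residual count into an enumeration of double cosets of exactly the flag-variety type of Lemma~\ref{parabolic-size}, but now with $\GL_{n_1}$-blocks, which is what produces $q^{n_1}$-powers rather than $q$-powers. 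The expected outcome is the recursion
\[
G_{\langle\Delta\rangle}(X)=\begin{bmatrix}n\\n_1,\,n-n_1\end{bmatrix}_q\frac{1}{[r]_{q^{n_1}}}X^{n_1(n_1-1)(r-1)}G_\rho(X)\,G_{\langle\Delta^-\rangle}(X),
\]
and telescoping this over $r$, using $\prod_{j=1}^r\begin{bmatrix}jn_1\\n_1,(j-1)n_1\end{bmatrix}_q=\begin{bmatrix}n\\n_1,\dots,n_1\end{bmatrix}_q$ and $\prod_{j=1}^r[j]_{q^{n_1}}=[r!]_{q^{n_1}}$, gives the closed form; the leading term follows from $\deg G_\rho=\tfrac12n_1(n_1-1)$ and $n=n_1r$, yielding $\dim_{GK}\langle\Delta\rangle=\tfrac12n_1(n_1-1)r^2=\tfrac{n(n-r)}2$.

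The step I expect to be the main obstacle is the passage from mirabolic data to honest $\GL_n(F)$-fixed vectors: a vector fixed by $K_N\cap M_n$ need not be fixed by all of $K_N$ (and $\langle\Delta\rangle|_{M_n}$ is in general not even an admissible $M_n$-representation), so the mirabolic computation must be threaded back through the global exact sequence rather than used on its own. What makes this feasible is exactly that the derivatives of $\langle\Delta\rangle$ are so short, which makes the relevant double-coset set finite and of the shape in Lemma~\ref{parabolic-size}. Keeping exact — not merely asymptotic — control of the normalizing characters $\delta^{1/2}$ and of the transition from $q$-factorials to $q^{n_1}$-factorials is the delicate part, and is what must be handled carefully to produce the factor $[r!]_{q^{n_1}}^{-1}$ rather than a quantity correct only in top degree.
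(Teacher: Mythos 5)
Your structural inputs are the same ones the paper uses (irreducibility of $\langle\Delta\rangle|_{M_n}$, the derivative calculus of Lemmas~\ref{der-calc1} and~\ref{der-calc2}, the embedding of $\langle\Delta\rangle$ into $\langle\Delta^-\rangle\times\nu^{r-1}\rho$), your expected recursion $G_{\langle\Delta\rangle}(X)=\begin{bmatrix}n\\n_1,\,n-n_1\end{bmatrix}_q[r]_{q^{n_1}}^{-1}X^{(n_1-1)(n-n_1)}G_\rho(X)G_{\langle\Delta^-\rangle}(X)$ is the correct one (it is what Lemma~\ref{parabolic-size} applied to $P_{(n-n_1)+(n_1-1)}\backslash\GL_{n-1}/(K_N\cap\GL_{n-1})$ produces, and your telescoping and leading-term bookkeeping are right). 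But the step you yourself flag as the main obstacle is exactly the step you never supply, and the substitute you sketch does not work. Tracking the Bernstein--Zelevinsky filtrations of the three terms of $0\to\langle\Delta\rangle\to\langle\Delta^-\rangle\times\nu^{r-1}\rho\to\langle\{\Delta^-,[\nu^{r-1}\rho]\}\rangle\to0$ after restriction to $M_n$ gives only $M_n$-module information: the filtration layers are stable under $K_N\cap M_n$ but not under $K_N$, so there is no well-defined ``layer of the $K_N$-fixed space of the standard module that dies in the quotient.'' If instead you apply $(-)^{K_N}$ to the exact sequence directly (which is exact), you get $\dim(\langle\Delta\rangle^{K_N})=\dim\big((\langle\Delta^-\rangle\times\nu^{r-1}\rho)^{K_N}\big)-\dim\big(\langle\{\Delta^-,[\nu^{r-1}\rho]\}\rangle^{K_N}\big)$, and the subtracted term is a general $\langle a\rangle$ whose growth polynomial is exactly as unknown as the one you are trying to compute (its determination in the paper relies on Proposition~\ref{delta-dim} as input), so this route is circular. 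As written, your plan computes fixed vectors under subgroups of $M_n$ but never converts them into $K_N$-fixed vectors.

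The missing idea, which is the actual content of the paper's proof, is twofold. First, the map $\langle\Delta\rangle\to\sigma\colonequals\nInd_{(n-n_1)+(n_1-1)+1}^{(n-1)+1}(\langle\Delta^-\rangle\boxtimes\nu^{r-1}\rho|_{(n_1-1)+1})$ is an \emph{isomorphism} of $M_n$-representations (injective by irreducibility of $\langle\Delta\rangle|_{M_n}$, surjective because the cokernel has vanishing total derivative), so Lemma~\ref{fixed} computes $\dim(\langle\Delta\rangle^{J})$ for compact open $J\subset M_n$. Second, one bridges from $M_n$ to $K_N$ as follows: for the family $J_{N_1}$, $N_1\ge N$, interpolating between $K_N\cap M_n$ and $K_N\cap\GL_{n-1}$, the Mackey count of $J_{N_1}$-fixed vectors is independent of $N_1$, so the chain $\langle\Delta\rangle^{J_N}\subseteq\langle\Delta\rangle^{J_{N+1}}\subseteq\cdots$ stabilizes and $\langle\Delta\rangle^{K_N\cap M_n}=\langle\Delta\rangle^{K_N\cap\GL_{n-1}}$; applying the same argument to $\langle\Delta\rangle^\vee\cong\langle\Delta\rangle\circ\iota$ with $\iota(g)={}^tg^{-1}$ gives $\langle\Delta\rangle^{K_N\cap{}^tM_n}=\langle\Delta\rangle^{K_N\cap\GL_{n-1}}$, and since $K_N\cap M_n$ and $K_N\cap{}^tM_n$ generate $K_N$, one concludes $\langle\Delta\rangle^{K_N}=\langle\Delta\rangle^{K_N\cap M_n}$, which yields the recursion. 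Without this stabilization-plus-duality argument (or an equivalent mechanism), your proposal has a genuine gap precisely at the passage from mirabolic data to $K_N$-invariants.
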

\begin{proof}
Throughout the proof, we abbreviate the parabolic subgroup $P_\lambda$ as the partition $\lambda$.

There is a $M_n$-homomorphism
\begin{align*}
i\colon \langle\Delta\rangle&\hookrightarrow\langle\Delta^-\rangle\times\nu^{r-1}\rho=\nInd_{(n-n_1)+n_1}^{n}(\langle\Delta^-\rangle\boxtimes\nu^{r-1}\rho)\\&\xrightarrow{res}\sigma\colonequals\nInd_{(n-n_1)+(n_1-1)+1}^{(n-1)+1}(\langle\Delta^-\rangle\boxtimes\nu^{r-1}\rho|_{(n_1-1)+1}),
\end{align*}
where the inclusion $\langle\Delta\rangle\hookrightarrow\langle\Delta^-\rangle\times\nu^{r-1}\rho$ is the $G$-homomorphism defined by the characterization of $\langle\Delta\rangle$ as the unique irreducible submodule of $\rho\times\cdots\times\nu^{r-1}\rho$ \cite[Section~3.1]{BZ2}, and $res$ is the surjection given by restricting functions $f\colon G\to\langle\Delta^-\rangle\boxtimes\nu^{r-1}\rho$ to $P_{(n-1)+1}$. We calculate their derivatives using Lemma~\ref{der-calc1} and Lemma~\ref{der-calc2}:
\begin{align*}
\mathscr{D}(\langle\Delta\rangle)&=\langle\Delta^-\rangle+\langle\Delta\rangle\\
\mathscr{D}(\langle\Delta^-\rangle\times\nu^{r-1}\rho)&=\langle\Delta^{--}\rangle+\langle\Delta^-\rangle+\langle\Delta^{--}\rangle\times\nu^{r-1}\rho+\langle\Delta^-\rangle\times\nu^{r-1}\rho\\
\mathscr{D}(\sigma)&=\langle\Delta^-\rangle+\sigma.
\end{align*}
Here $i$ is nonzero, since both $\langle\Delta\rangle$ and $\sigma$ have $n_1$-th derivative $\langle\Delta^-\rangle$, but the $n_1$-th derivative $\langle\Delta^-\rangle+\langle\Delta^{--}\rangle\times\nu^{r-1}\rho$ of $\langle\Delta^-\rangle\times\nu^{r-1}\rho$ only contains the irreducible representation $\langle\Delta^-\rangle$ with multiplicity one. Now, since $\langle\Delta\rangle$ is an irreducible $M_n$-representation by \cite[Cor~7.9]{BZ2}, the map $i$ must be injective.

Moreover, if $\tau$ is the cokernel of $i$, then its total derivative is
\begin{align*}
\mathscr D(\tau)&=\mathscr{D}(\sigma)-\mathscr{D}(\langle\Delta\rangle)\\
&=\sigma-\langle\Delta\rangle\\
&=\tau.
\end{align*}
That is, $\tau$ has no derivatives of degree $>0$, so by \cite[Section~3.5]{BZ2} we have $\tau=0$. Thus, $i$ is an isomorphism of $M_n$-representations.



The overall strategy is as follows: Use the isomorphism $i$ to calculate the dimension of $\langle\Delta\rangle^{K_N\cap M_n}\supseteq \langle\Delta\rangle^{K_N}$. In fact, we can show \[\langle\Delta\rangle^{K_N\cap M_n}=\langle\Delta\rangle^{K_N\cap\GL_{n-1}}=\langle\Delta\rangle^{K_N\cap {^t}M_n},\]
so
\[
\langle\Delta\rangle^{K_N}=\langle\Delta\rangle^{K_N\cap M_n}\cap \langle\Delta\rangle^{K_N\cap {^t}M_n}=\langle\Delta\rangle^{K_N\cap M_n},
\]
which allows us to calculate $\dim(\langle\Delta\rangle^{K_N})$.


First, for $N_1\ge N$ let
\[
J_{N_1}\colonequals\left(\begin{array}{@{}c|c@{}}
\begin{matrix}
\\
K_N\cap\GL_{n-1}\\
{}
\end{matrix} &
\begin{matrix}
\p^{N_1}\\
\vdots\\
\p^{N_1}
\end{matrix}\\\hline
0\quad \cdots\quad 0&1
\end{array}\right)\subset M_n
\]
be compact open subgroups which approximate the subgroup $K_N\cap\GL_{n-1}$ of $M_n$, with $J_N=K_N\cap M_n$. Here,
\begin{equation}\label{symmetric-fixed}
\langle\Delta\rangle^{K_N\cap\GL_{n-1}}=\bigcup_{N_1\ge N}\langle\Delta\rangle^{J_{N_1}},
\end{equation}
where $\supseteq$ is obvious and $\subseteq$ is from smoothness of the representation $\langle\Delta\rangle|_{M_n}$. This allows us to use Lemma~\ref{fixed} on the groups $J_{N_1}$.

For each $N_1\ge N$ we have
\begin{equation}\label{double-coset-calc}
P_{(n-n_1)+(n_1-1)+1}\backslash P_{(n-1)+1}/J_{N_1}\cong P_{(n-n_1)+(n_1-1)}\backslash \GL_{n-1}/K_N\cap\GL_{n-1}
\end{equation}
has size $\begin{bmatrix}n-1\\n_1-1\end{bmatrix}_qq^{n(n_1-1)(N-1)}$ by Lemma~\ref{parabolic-size}, which notably does not depend on the choice of $N_1$. Since $i$ is a $M_n$-isomorphism,
\begin{align}\label{integer-coeff-eq}\dim(\langle\Delta\rangle^{J_{N_1}})&=\dim(\sigma^{J_{N_1}})\nonumber\\
&=|P_{(n-n_1)+(n_1-1)+1}\backslash P_{(n-1)+1}/J_{N_1}|\dim\big(\langle\Delta^-\rangle^{K_N}\otimes(\nu^{r-1}\rho)^{K_N}\big)\\
&=\begin{bmatrix}n-1\\n_1-1\end{bmatrix}_qq^{n(n_1-1)(N-1)}\dim(\langle\Delta^-\rangle^{K_N})\dim(\rho^{K_N}),\nonumber\end{align}
where the second equality is by a similar argument as in Lemma~\ref{induced-dim}. By the independence on the choice of $N_1$, the chain of inclusions
\[
\langle\Delta\rangle^{J_N}\subseteq \langle\Delta\rangle^{J_{N+1}}\subseteq\langle\Delta\rangle^{J_{N+2}}\subseteq\cdots
\]
must all be equalities, so in particular, \eqref{symmetric-fixed} tells us that $\langle\Delta\rangle^{K_N\cap M_N}=\langle\Delta\rangle^{K_N\cap \GL_{n-1}}$.

On the other hand, by \cite[Thm~7.3]{BZ} we have $\langle\Delta\rangle\circ\iota\cong\langle\Delta\rangle^\vee\cong\langle[\nu^{1-r}\rho^\vee,\rho^\vee]\rangle$, where $\iota(g)\colonequals {^t}g^{-1}$ is the involution on $\GL_n(F)$ corresponding to the nontrivial automorphism of the Dynkin diagram $A_{n-1}$. Thus, for any subgroup $H\subset M_n$ we have
\[
\langle\Delta\rangle^{{^t}H}\cong(\langle\Delta\rangle^\vee)^H.
\]
Symmetrically, all the same arguments, with $\rho$ replaced by $\nu^{1-r}\rho^\vee$, tells us that $\langle\Delta\rangle^{K_N\cap {^t}M_N}=\langle\Delta\rangle^{K_N\cap \GL_{n-1}}$.

Since the groups $K_N\cap M_n$ and $K_N\cap {^t}M_n$ generate $K_N$,
\[
\langle\Delta\rangle^{K_N}=\langle\Delta\rangle^{K_N\cap M_n}\cap\langle\Delta\rangle^{K_N\cap {^t}M_n}=\langle\Delta\rangle^{K_N\cap M_n}.
\]
But this dimension was calculated in equation~\ref{integer-coeff-eq}, thus giving us a recursive formula for $\langle\Delta\rangle^{K_N}$.

By downward induction on $r$, and observing that
\[
\begin{bmatrix}n-1\\n_1-1\end{bmatrix}_q=\frac{[n_1]_q}{[n]_q}\cdot\begin{bmatrix}n\\n_1\end{bmatrix}_q=[r]_{q^{n_1}}^{-1}\begin{bmatrix}n_1r\\n_1\end{bmatrix}_q,
\]
we obtain
\[
\dim(\langle\Delta\rangle^{K_N})=[r!]_{q^{n_1}}^{-1}\begin{bmatrix}n\\n_1,\dots,n_1\end{bmatrix}_q(q^{N-1})^{\frac{n_1(n_1-1)r(r-1)}{2}}\dim(\rho^{K_N})^r.
\]
The final asymptotic is by Proposition~\ref{cusp-asymp}.
\end{proof}

\begin{rmk}
In the notation of the remark after Lemma~\ref{exact-seq}, Proposition~\ref{delta-dim} becomes:
\[
\mathscr I_{\langle\Delta\rangle}(X)=[r!]_{q^{n_1}}^{-1}X^{-\frac{r-1}2n}\mathscr I_\rho(X)^r.
\]
\end{rmk}

As a corollary, we generalize a result of Howe \cite{howe} that $G_\rho(X)\in\Z[X]$, for $\rho$ supercuspidal:
\begin{cor}\label{int-coeff}
Let $\pi$ be an arbitrary admissible representation of $\GL_n(F)$. Then, the growth polynomial $G_\pi(X)$ has integer coefficients.
\end{cor}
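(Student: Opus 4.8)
The plan is to bootstrap from the supercuspidal case, where integrality of $G_\rho$ is the cited theorem of Howe \cite{howe}, using that $\mathcal R$ is generated as a ring by the classes $[\langle\Delta\rangle]$ and that neither the product formula (Lemma~\ref{induced-dim}) nor additivity (Lemma~\ref{exact-seq}) destroys integrality. Throughout, $q$ is a fixed prime power, so every Gaussian binomial or multinomial $\begin{bmatrix}n\\n_1,\dots,n_r\end{bmatrix}_q$ appearing below is a genuine nonnegative integer; I would use this classical fact without further comment.

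First I would settle $\pi=\langle\Delta\rangle$ for $\Delta=[\rho,\nu^{r-1}\rho]$ with $\rho$ supercuspidal on $\GL_{n_1}$, by induction on $r$. For $r=1$ we have $\langle\Delta\rangle=\rho$ and $G_\rho(X)\in\Z[X]$ by Howe. For the inductive step I would read off equation~\eqref{integer-coeff-eq} from the proof of Proposition~\ref{delta-dim}: after substituting $X=q^{N-1}$ it says
\[
G_{\langle\Delta\rangle}(X)=\begin{bmatrix}n-1\\n_1-1\end{bmatrix}_q\,X^{n(n_1-1)}\,G_{\langle\Delta^-\rangle}(X)\,G_\rho(X),
\]
and since $\Delta^-$ has length $r-1$ we get $G_{\langle\Delta^-\rangle}(X)\in\Z[X]$ by induction, while $G_\rho(X)\in\Z[X]$, the monomial has integer coefficients, and the Gaussian binomial is an integer; hence $G_{\langle\Delta\rangle}(X)\in\Z[X]$. (One could equally use the closed form in Proposition~\ref{delta-dim} together with the identity $\begin{bmatrix}n_1r\\n_1\end{bmatrix}_q=[r]_{q^{n_1}}\begin{bmatrix}n_1r-1\\n_1-1\end{bmatrix}_q$ recorded there, which exhibits $[r!]_{q^{n_1}}^{-1}\begin{bmatrix}n\\n_1,\dots,n_1\end{bmatrix}_q$ as a product of Gaussian binomials, but the recursion is cleaner.)

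Next I would observe that if $\pi_1,\dots,\pi_s$ are representations with each $G_{\pi_i}(X)\in\Z[X]$, then Lemma~\ref{induced-dim} gives
\[
G_{\pi_1\times\cdots\times\pi_s}(X)=\begin{bmatrix}m\\m_1,\dots,m_s\end{bmatrix}_q\,X^{(m^2-m_1^2-\cdots-m_s^2)/2}\prod_{i=1}^{s}G_{\pi_i}(X)\in\Z[X],
\]
so in particular every product $\langle\Delta_1\rangle\times\cdots\times\langle\Delta_s\rangle$ has growth polynomial in $\Z[X]$. Finally, by \cite[Cor~7.5]{BZ2} the classes $[\langle\Delta\rangle]$ generate $\mathcal R$ as a ring, so for arbitrary $\pi$ I may write $[\pi]=\sum_j\epsilon_j\,[\langle\Delta_{j,1}\rangle\times\cdots\times\langle\Delta_{j,s_j}\rangle]$ with $\epsilon_j\in\Z$; since $(-)^{K_N}$ is exact, $G$ descends to an additive map $\mathcal R\to\C[X]$ by Lemma~\ref{exact-seq} and Corollary~\ref{fixed-poly}, whence $G_\pi(X)=\sum_j\epsilon_j\,G_{\langle\Delta_{j,1}\rangle\times\cdots\times\langle\Delta_{j,s_j}\rangle}(X)\in\Z[X]$.

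I do not expect a genuine obstacle: the corollary is essentially a bookkeeping consequence of the results already proved. The one place that deserves attention — and the reason I would route the $\langle\Delta\rangle$ case through the recursion \eqref{integer-coeff-eq} rather than the closed form of Proposition~\ref{delta-dim} — is that the a priori rational coefficient $[r!]_{q^{n_1}}^{-1}\begin{bmatrix}n\\n_1,\dots,n_1\end{bmatrix}_q$ there must be recognized as an integer; beyond that, the supercuspidal integrality of Howe is taken as an external input, being precisely the statement we are generalizing.
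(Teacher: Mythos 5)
Your proposal is correct and follows essentially the same route as the paper: supercuspidal integrality from Howe via Corollary~\ref{fixed-poly}, integrality for $\langle\Delta\rangle$ via the recursion~\eqref{integer-coeff-eq} (precisely the route the paper itself recommends over the closed form of Proposition~\ref{delta-dim}), and then Lemma~\ref{induced-dim} together with generation of $\mathcal R$ by the classes $[\langle\Delta\rangle]$ and additivity from Lemma~\ref{exact-seq}. You only spell out bookkeeping the paper leaves implicit, and the exact exponent of $X$ you quote from the recursion is immaterial for integrality, so nothing is lost.
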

\begin{proof}
The statement holds for $\pi$ supercuspidal, by Corollary~\ref{fixed-poly} and \cite[Thm~3]{howe}, which states $c_\cO(\pi)\in\Z$ for any nilpotent orbit $\cO$.

The general case follows from Proposition~\ref{delta-dim}, Lemma~\ref{induced-dim}, and the fact that the Grothendieck ring $\mathcal R$ of representations of $\GL$ is generated as a ring by representations of the form $\langle\Delta\rangle$ (see \cite[Cor~7.5]{BZ2}). That $[r!]_{q^{n_1}}^{-1}\begin{bmatrix}n\\n_1,\dots,n_1\end{bmatrix}_q$ in Proposition~\ref{delta-dim} is an integer is more readily seen from the recursion~\eqref{integer-coeff-eq}.
\end{proof}

\begin{rmk}
The weaker statement, that $G_\pi(X)\in\Q[X]$, is immediate since $\dim(\pi^{K_N})$ is always an integer.
\end{rmk}

\subsection{Growth polynomial of arbitrary admissible representations} Although Proposition~\ref{delta-dim} allows for the calculation of growth polynomials for arbitrary representations, explicitly expressing representations in terms of $\langle\Delta\rangle$ is difficult in general. Nonetheless, Theorem~\ref{main-thm} calculates the leading term in terms of the Bernstein-Zelevinsky classification.

The following lemma allows us to relate the leading term of the growth polynomial of the representation $\langle a\rangle$ to that of the easier-understood representation $\pi(a)$.

\begin{lemma}\label{main-lemma}
For any multiset $a$ of segments,
\[
\dim(\langle a\rangle^{K_N})=(1+o(1))\dim(\pi(a)^{K_N})
\]
so in particular, $\dim_{GK}(\pi(a))=\dim_{GK}(\langle a\rangle)$.
\end{lemma}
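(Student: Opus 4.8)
The plan is to compare the standard module $\pi(a)$ with its irreducible constituents inside the Grothendieck ring $\mathcal R$, and to control everything through degrees. The first observation is that $\deg G_{\pi(a)}$ — indeed $\deg G_{\pi(b)}$ for \emph{every} multisegment $b$ — is already computable with the tools in hand: $\pi(b)$ is by definition a parabolic induction $\langle\Delta_1\rangle\times\cdots\times\langle\Delta_m\rangle$ of a product of representations of the form $\langle\Delta\rangle$, so Lemma~\ref{induced-dim} together with Proposition~\ref{delta-dim} give $G_{\pi(b)}$ completely; writing $\Delta_i=[\rho_i,\nu^{r_i-1}\rho_i]$ with $\rho_i$ supercuspidal on $\GL_{n_i}$ and $n=n_1r_1+\cdots+n_mr_m$, a short computation yields
\[
\deg G_{\pi(b)}=d(b)\colonequals\tfrac12\Big(n^2-\sum_i n_ir_i^2\Big),
\]
with no appeal to the main theorem. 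I would then invoke Zelevinsky's classification \cite[\S7]{BZ2}: in $\mathcal R$ one has $[\pi(a)]=[\langle a\rangle]+\sum_{b<a}m_{a,b}[\langle b\rangle]$ with $m_{a,b}\in\Z_{\ge0}$, the sum running over multisegments strictly below $a$ in Zelevinsky's order (and $\langle a\rangle$ occurring with multiplicity one), so by Lemma~\ref{exact-seq}, $G_{\pi(a)}=G_{\langle a\rangle}+\sum_{b<a}m_{a,b}G_{\langle b\rangle}$.

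The remaining point is that the correction terms are of strictly lower degree. For this I would use that Zelevinsky's order is generated by the elementary operation replacing a linked pair $\{\Delta,\Delta'\}$ (necessarily over a common supercuspidal on some $\GL_{n_1}$) by $\{\Delta\cup\Delta',\Delta\cap\Delta'\}$: if $r,r'$ are the lengths of $\Delta,\Delta'$ and $\ell,s$ those of $\Delta\cup\Delta',\Delta\cap\Delta'$, then $\ell+s=r+r'$ while $\ell>\max(r,r')$ strictly (as $\Delta\cup\Delta'$ properly contains both), whence $\ell^2+s^2>r^2+r'^2$; so the operation strictly increases $\sum_i n_ir_i^2$ and strictly decreases $d$. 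Running along chains, $b<a\implies d(b)<d(a)$. Now for each $b<a$ appearing above, $\langle b\rangle$ is an irreducible subquotient of $\pi(b)$, so $\deg G_{\langle b\rangle}\le\deg G_{\pi(b)}=d(b)<d(a)=\deg G_{\pi(a)}$ by Lemma~\ref{exact-seq}. Hence $\sum_{b<a}m_{a,b}G_{\langle b\rangle}$ has degree strictly below $\deg G_{\pi(a)}$, so $G_{\langle a\rangle}$ and $G_{\pi(a)}$ share the same degree \emph{and} the same leading coefficient — which is exactly $\dim(\langle a\rangle^{K_N})=(1+o(1))\dim(\pi(a)^{K_N})$, and in particular $\dim_{GK}(\pi(a))=\dim_{GK}(\langle a\rangle)$.

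I expect the only genuinely substantive input to be the triangularity statement (that the transition matrix between the standard modules $\pi(b)$ and the irreducibles $\langle b\rangle$ is unipotent upper-triangular for Zelevinsky's order), which I would simply quote from \cite{BZ2}; the rest is formal once one notices that $\deg G_{\pi(b)}$ is computed uniformly by Lemma~\ref{induced-dim} and Proposition~\ref{delta-dim}. The one thing to watch is that this argument is free of circularity: nothing above uses the closed formula for $\dim_{GK}\langle a\rangle$ that one is ultimately after — it uses only that $\langle b\rangle$ embeds in (a subquotient of) $\pi(b)$ and that $d$ is monotone along the order. A minor loose end is to make sure the elementary-operation description of Zelevinsky's order, and the case $\Delta\cap\Delta'=\varnothing$ (where $s=0$), are handled, but the inequality $\ell^2+s^2>r^2+r'^2$ persists there as well.
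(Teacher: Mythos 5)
Your proposal is correct and follows essentially the same route as the paper: compute $\deg G_{\pi(b)}=\tfrac12\bigl(n^2-\sum_i n_ir_i^2\bigr)$ from Lemma~\ref{induced-dim} and Proposition~\ref{delta-dim}, use the unipotent-triangular decomposition $\pi(a)=\langle a\rangle+\sum_{b<a}m(b;a)\langle b\rangle$ from \cite[Thm~7.1]{BZ2} together with Lemma~\ref{exact-seq}, and kill the correction terms via the convexity argument showing each elementary operation strictly decreases this degree, combined with $\deg G_{\langle b\rangle}\le\deg G_{\pi(b)}$. Your explicit treatment of the juxtaposed case $\Delta\cap\Delta'=\varnothing$ is a welcome extra check, but the argument is the same as the paper's.
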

\begin{proof}
First of all, we claim that if $a<b$ (i.e., $a$ can be obtained from $b$ via a chain of elementary operations) then $\dim_{GK}\pi(a)<\dim_{GK}\pi(b)$. Indeed, by Lemma~\ref{induced-dim} it suffices to check that for linked segments $\Delta$ and $\Delta'$,
\[
\dim(\langle\Delta\cap\Delta'\rangle\times\langle\Delta\cup\Delta'\rangle)<\dim(\langle\Delta\rangle\times\langle\Delta'\rangle).
\]
Let $\Delta=[\rho,\nu^{r-1}\rho]$ and $\Delta'=[\nu^s\rho,\nu^{s+t-1}\rho]$ for some $0<s\le r<s+t$ and $\rho$ a supercuspidal representation of $\GL_{n_1}(F)$. We hope to show
\[
\dim_{GK}(\langle\Delta\cap\Delta'\rangle\times\langle\Delta\cup\Delta'\rangle)=(t+s)(r-s)n_1^2+((t+s)^2+(r-s)^2)\frac{n_1(n_1-1)}2.
\]
is less than
\[
\dim_{GK}(\langle\Delta\rangle\times\langle\Delta'\rangle)=rtn_1^2+(r^2+t^2)\frac{n_1(n_1-1)}2.
\]
This reduces to the fact that $(t+s)^2+(t-s)^2>r^2+t^2$, i.e., the convexity of $x^2$.

Now by \cite[Thm~7.1]{BZ2}, in the Grothendieck ring $\mathcal R$ of representations of $\GL$ we have
\[
\pi(a)=\langle a\rangle+\sum_{b<a}m(b;a)\langle b\rangle
\]
for some nonnegative integers $m(b;a)$. For $b<a$ we have $\dim_{GK}\langle b\rangle\le \dim_{GK}\pi(b)<\dim_{GK}\pi(a)$, so
\[
\dim(\pi(a)^{K_N})=\dim(\langle a\rangle^{K_N})+\sum_{b<a}m(b;a)\dim(\langle b\rangle^{K_N})=(1+o(1))\dim(\langle a\rangle^{K_N}).\qedhere
\]\end{proof}
We finally arrive at our main theorem:
\begin{thm}\label{main-thm}
Let $\pi$ be an arbitrary admissible irreducible representation of $\GL_n(F)$. Then, there exists a multisegment $a=\{\Delta_1,\dots,\Delta_m\}$ such that $\rho=\langle a\rangle$, where $\Delta_i=[\rho_i,\nu^{r_i-1}\rho_i]$ for supercuspidal representations $\rho_i$ of $\GL_{n_i}(F)$. Then,
\[
G_\pi(X)=\frac{[n!]_q}{[r_1!]_{q^{n_1}}\cdots[r_m!]_{q^{n_m}}}X^{\frac12(n^2-n_1r_1^2-\cdots-n_mr_m^2)}+(\text{lower order terms}),
\]
so in particular, $\dim_{GK}(\pi)=\frac12(n^2-n_1r_1^2-\cdots-n_mr_m^2)$.
\end{thm}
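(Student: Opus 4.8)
The plan is to combine Lemma~\ref{main-lemma} with Proposition~\ref{delta-dim} and Lemma~\ref{induced-dim}. By Lemma~\ref{main-lemma}, the leading term of $G_\pi(X)=G_{\langle a\rangle}(X)$ agrees with that of $G_{\pi(a)}(X)$, where $\pi(a)=\langle\Delta_1\rangle\times\cdots\times\langle\Delta_m\rangle$ is the standard module attached to the multisegment $a$. So it suffices to compute the leading term of the growth polynomial of this product, which is a purely bookkeeping task once the factors $G_{\langle\Delta_i\rangle}$ are known.

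The key steps, in order, are as follows. First, by Lemma~\ref{induced-dim} applied to the partition $n=n_1r_1+\cdots+n_mr_m$,
\[
G_{\pi(a)}(X)=\begin{bmatrix}n\\ n_1r_1,\cdots,n_mr_m\end{bmatrix}_q X^{\frac12(n^2-\sum_i n_i^2r_i^2)}\prod_{i=1}^m G_{\langle\Delta_i\rangle}(X).
\]
Second, substitute the leading term of each factor from Proposition~\ref{delta-dim}, namely $G_{\langle\Delta_i\rangle}(X)=\frac{[(n_ir_i)!]_q}{[r_i!]_{q^{n_i}}}X^{\frac12 n_ir_i(n_ir_i-r_i)}+(\text{lower order})$. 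Third, collect the powers of $X$: the exponent becomes
\[
\tfrac12\bigl(n^2-\textstyle\sum_i n_i^2r_i^2\bigr)+\textstyle\sum_i \tfrac12 n_ir_i(n_ir_i-r_i)=\tfrac12\bigl(n^2-\textstyle\sum_i n_ir_i^2\bigr),
\]
which is exactly the claimed $\dim_{GK}(\pi)$. Fourth, collect the scalar coefficient: the multinomial $\begin{bmatrix}n\\ n_1r_1,\cdots,n_mr_m\end{bmatrix}_q$ equals $\frac{[n!]_q}{\prod_i[(n_ir_i)!]_q}$, so multiplying by $\prod_i\frac{[(n_ir_i)!]_q}{[r_i!]_{q^{n_i}}}$ telescopes to $\frac{[n!]_q}{[r_1!]_{q^{n_1}}\cdots[r_m!]_{q^{n_m}}}$, as desired. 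Finally, since $\langle a\rangle$ occurs in $\pi(a)$ with multiplicity one and all other subquotients have strictly smaller GK-dimension (as recorded in the proof of Lemma~\ref{main-lemma}), the leading term of $G_{\langle a\rangle}$ coincides with the leading term computed above; lower-order terms are absorbed into the error.

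The main obstacle is not in this final assembly, which is essentially algebra, but rather lies entirely in the inputs that precede it: the identification in Lemma~\ref{main-lemma} that every $b<a$ satisfies $\dim_{GK}\langle b\rangle<\dim_{GK}\pi(a)$ (via convexity of $x\mapsto x^2$ applied to linked segments), and the recursion in Proposition~\ref{delta-dim}. Assuming those, the argument here is a short computation; care is only needed to verify that the exponent and coefficient simplifications are carried out with $n=\sum_i n_ir_i$ consistently, and that the error term $o(1)$ propagates correctly through the finite product and the finitely many subquotients.
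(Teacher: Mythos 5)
Your proposal is correct and follows exactly the paper's own argument: Lemma~\ref{main-lemma} reduces to the standard module $\langle\Delta_1\rangle\times\cdots\times\langle\Delta_m\rangle$, Lemma~\ref{induced-dim} factors its growth polynomial, and Proposition~\ref{delta-dim} supplies each factor's leading term, after which the exponent and $q$-multinomial simplifications you carry out are precisely those in the paper. No gaps; the bookkeeping with $n=\sum_i n_ir_i$ is done correctly.
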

\begin{proof}
Combining Lemma~\ref{induced-dim}, Lemma~\ref{main-lemma}, and Proposition~\ref{delta-dim}, we obtain:
\begin{align*}
    \dim(\pi^{K_N})&=(1+o(1))\dim((\langle\Delta_1\rangle\times\cdots\times\langle\Delta_m\rangle)^{K_N})\\
    &=(1+o(1))\begin{bmatrix}n\\n_1r_1,\dots,n_mr_m\end{bmatrix}_q(q^{N-1})^{\frac{n^2-n_1^2r_1^2-\cdots-n_m^2r_m^2}2}\prod_{i=1}^m\dim(\langle\Delta_i\rangle^{K_N})\\
    &=(1+o(1))\frac{[n!]_q}{[r_1!]_{q^{n_1}}\cdots[r_m!]_{q^{n_m}}}(q^{N-1})^{\frac12(n^2-n_1r_1^2-\cdots-n_mr_m^2)}.\qedhere
\end{align*}
\end{proof}

\begin{example}\label{bz-example}
Consider representations with support $\{\rho,\nu\rho,\nu\rho,\nu^2\rho\}$, as in \cite[Ex~11.4]{BZ2}, with $\rho$ a supercuspidal representation of $\GL_{n_1}(F)$. Equivalently, they are irreducible subquotients of $\rho\times\nu\rho\times\nu\rho\times\nu^2\rho$ \cite[Thm~7.1]{BZ2}. Then, the Hasse diagram for multisegments supported on it is (where the partial order is induced from elementary operations):
\begin{center}\begin{tikzpicture}[scale=.7]
  \node (max) at (0,2) {$\{\rho,\nu\rho,\nu\rho,\nu^2\rho\}$};
  \node (l) at (-3,0) {$\{[\rho,\nu\rho],\nu\rho,\nu^2\rho\}$};
  \node (r) at (3,0) {$\{\rho,\nu\rho,[\nu\rho,\nu^2\rho]\}$};
  \node (zero) at (0,-2) {$\{[\rho,\nu\rho],[\nu\rho,\nu^2\rho]\}$};
  \node (min) at (0,-4) {$\{[\rho,\nu^2\rho],\nu\rho\}$};
  \draw (l)--(max)--(r);
  \draw (l)--(zero)--(r);
  \draw (zero)--(min);
\end{tikzpicture}\end{center}
and the corresponding GK-dimensions of $\langle a\rangle$ (which, due to Lemma~\ref{main-lemma} and its proof respects the Hasse diagram) are:
\begin{center}\begin{tikzpicture}[scale=.7]
  \node (max) at (0,2) {$8n_1^2-2n_1$};
  \node (l) at (-3,0) {$8n_1^2-3n_1$};
  \node (r) at (3,0) {$8n_1^2-3n_1$};
  \node (zero) at (0,-2) {$8n_1^2-4n_1$};
  \node (min) at (0,-4) {$8n_1^2-5n_1$.};
  \draw (l)--(max)--(r);
  \draw (l)--(zero)--(r);
  \draw (zero)--(min);
\end{tikzpicture}\end{center}

\end{example}

\begin{example}[Generalized Steinberg representations]
For a supercuspidal representation $\rho$ of $\GL_{n_1}$ let $\Delta=[\rho,\nu^{r-1}\rho]$, and consider the generalized Steinberg representation $\langle\Delta\rangle^t=\langle\rho,\dots,\nu^{r-1}\rho\rangle$ of $\GL_n$. Then, by Theorem~\ref{main-thm},
\[
G_{\langle\Delta\rangle^t}(X)=[n!]_qX^{\frac12(n-1)n}+(\text{lower order terms}).
\]
Proposition~\ref{example-prop} gives a more explicit formula.
\end{example}

\begin{example}[Generic representations]
More generally, if $\pi$ is a generic representation of $\GL_n(F)$, then
\[
G_\pi(X)=[n!]_qX^{\frac12(n-1)n}+(\text{lower order terms}).
\]
so the asymptotic formula is independent of the supercuspidal support. In fact, generic representations are characterized by being maximal (with respect to the order induced by elementary operations) in the poset of representations with a fixed supercuspidal support \cite[Thm~9.7]{BZ2}. Thus, $\dim_{GK}(\pi)=\frac{n^2-n}2$ if and only if $\pi$ is generic, recovering Rodier's result \cite{rodier2}.
\end{example}

\section{More examples}\label{section-more-examples}
In this section, we will only consider $\rho$-rigid representations, for some fixed supercuspidal representation $\rho$:
\begin{defn}
An irreducible representation $\pi$ of $\GL_n(F)$ is \emph{$\rho$-rigid} for some supercuspidal representation $\rho$ if the supercuspidal support of $\pi$ only contains representations of the form $\nu^i\rho$ for some $i\in\Z$.
\end{defn}
Only considering such representations suffice, by \cite[Prop~8.6]{BZ2} and Lemma~\ref{induced-dim}.

An explicit formula for the growth polynomial is available in some cases, when the structure of the lattice of multisegments is simple:
\begin{prop}\label{example-prop}
Let a $\rho$-rigid multiset of segments $a$ with support of size $s$ be such that any two of its segments have an empty intersection, where $\rho$ is a supercuspidal representation of $\GL_{n_1}(F)$ (so $n=n_1s$). Then,
\[
G_{\langle a\rangle}(X)=\begin{bmatrix}n\\n_1,\dots,n_1\end{bmatrix}_qX^{\frac{s(s-1)}2n_1^2}G_\rho(X)^{s}\sum_{b\le a}(-1)^{s-|b|}\prod_{i=1}^m[r_i!]_{q^{n_1}}^{-1}X^{-\frac{r_i(r_i-1)}2n_1},
\]
where $b=\{\Delta_1,\dots,\Delta_m\}\le a$, with $\Delta_i$ a segment of size $r_i$.
\end{prop}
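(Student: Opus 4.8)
The plan is to reduce the assertion to an identity in the Grothendieck ring $\mathcal R$ and then resolve that identity by Möbius inversion over an explicit poset. Since, by the remark after Lemma~\ref{exact-seq}, $G_\bullet(X)$ is additive on short exact sequences (Lemma~\ref{exact-seq}) and behaves multiplicatively under $\times$ up to the flag-variety factor of Lemma~\ref{induced-dim}, it suffices to prove the identity
\[
\langle a\rangle=\sum_{b\le a}(-1)^{|a|-|b|}\,\pi(b)\qquad\text{in }\mathcal R,\quad\text{where }\pi(b)=\langle\Delta_1\rangle\times\cdots\times\langle\Delta_m\rangle\text{ for }b=\{\Delta_1,\dots,\Delta_m\},
\]
($|a|$ denoting the number of segments of $a$), and separately to evaluate $G_{\pi(b)}(X)$. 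The latter is a direct computation from Lemma~\ref{induced-dim} and Proposition~\ref{delta-dim}: writing $r_i=|\Delta_i|$ (so $\Delta_i$ is a representation of $\GL_{n_1r_i}$, and $\sum_i r_i=s$, $n=n_1s$), and using the nested identity $\begin{bmatrix}n\\n_1r_1,\dots,n_1r_m\end{bmatrix}_q\prod_i\begin{bmatrix}n_1r_i\\n_1,\dots,n_1\end{bmatrix}_q=\begin{bmatrix}n\\n_1,\dots,n_1\end{bmatrix}_q$ together with the exponent identity $\frac{n^2-\sum_i n_1^2 r_i^2}{2}+\sum_i\frac{n_1(n_1-1)r_i(r_i-1)}{2}=\frac{s(s-1)}{2}n_1^2-\frac{n_1}{2}\sum_i r_i(r_i-1)$ (both consequences of $\sum r_i=s$, $n=n_1s$), one obtains
\[
G_{\pi(b)}(X)=\begin{bmatrix}n\\n_1,\dots,n_1\end{bmatrix}_q X^{\frac{s(s-1)}{2}n_1^2}G_\rho(X)^s\prod_{i=1}^m[r_i!]_{q^{n_1}}^{-1}X^{-\frac{r_i(r_i-1)}{2}n_1}.
\]
Substituting this into the $\mathcal R$-identity and extracting the common factor $\begin{bmatrix}n\\n_1,\dots,n_1\end{bmatrix}_q X^{\frac{s(s-1)}{2}n_1^2}G_\rho(X)^s$ then yields the stated formula (a parity check turns $(-1)^{|a|-|b|}$ into $(-1)^{s-|b|}$ up to the overall, $b$-independent sign $(-1)^{s-|a|}$).

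For the $\mathcal R$-identity, I would first pin down the poset. As the segments of $a$ are pairwise disjoint they are distinct and partition the supercuspidal support into intervals, and any elementary operation applied to a multiset of disjoint segments merges two linked---hence adjacent---segments, leaving the result a multiset of disjoint segments. Thus every $b\le a$ is an ``interval coarsening'' of $a$, and, breaking the support into its maximal connected components, the interval $\{b:b\le a\}$ is a product (one factor per component) of Boolean lattices on the sets of break points, with maximum $a$; its Möbius function is $\mu(b,a)=(-1)^{|a|-|b|}$. The one representation-theoretic input I need is that for every multiset $c$ of pairwise disjoint segments the standard module $\pi(c)$ is \emph{multiplicity-free}, with composition factors exactly $\{\langle c'\rangle:c'\le c\}$; granting this for all $b\le a$ (which do have disjoint segments), we get $\pi(b)=\sum_{c\le b}\langle c\rangle$ in $\mathcal R$, and Möbius inversion over the Boolean-lattice product yields at once $\langle a\rangle=\sum_{b\le a}\mu(b,a)\pi(b)=\sum_{b\le a}(-1)^{|a|-|b|}\pi(b)$.

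The multiplicity-one statement is the crux, and I expect it to be the main obstacle. I would prove it by induction on the number of segments: order the segments of $c$ so that $\Delta_m$ is rightmost in its component, write $\pi(c)=\pi(c\setminus\{\Delta_m\})\times\langle\Delta_m\rangle=\sum_{c'}\langle c'\rangle\times\langle\Delta_m\rangle$ by induction, and note that $\Delta_m$ is linked to \emph{at most one} segment of each coarsening $c'$ of $c\setminus\{\Delta_m\}$. Then the Zelevinsky product relation $\langle\Delta\rangle\times\langle\Delta'\rangle=\langle\{\Delta,\Delta'\}\rangle+\langle\Delta\cup\Delta'\rangle$ for linked disjoint $\Delta,\Delta'$ \cite{BZ2} makes $\langle c'\rangle\times\langle\Delta_m\rangle$ a sum of at most two irreducibles, both coarsenings of $c$; a bookkeeping argument shows each coarsening of $c$ is hit exactly once across the various $c'$, and a Bernstein--Zelevinsky derivative computation parallel to the one in the proof of Proposition~\ref{delta-dim} (via Lemma~\ref{der-calc1} and Lemma~\ref{der-calc2}) verifies that both irreducibles genuinely occur and are distinct. (Equivalently, this is the triviality of the Kazhdan--Lusztig/Zelevinsky polynomials attached to multisegments with pairwise disjoint segments.) The rest---the Boolean-poset combinatorics and the $q$-binomial manipulations---is routine.
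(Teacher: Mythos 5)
Your proof follows the same skeleton as the paper's: reduce to the Grothendieck-ring identity $\langle a\rangle=\sum_{b\le a}(-1)^{|a|-|b|}\pi(b)$, then evaluate $G_{\pi(b)}$ via Lemma~\ref{induced-dim} and Proposition~\ref{delta-dim} and simplify with the nested $q$-multinomial and exponent identities (that part of your computation is correct and is exactly what the paper does, including the parity remark: the displayed sign $(-1)^{s-|b|}$ coincides with $(-1)^{|a|-|b|}$ precisely when $s\equiv|a|\pmod 2$, e.g.\ when $a$ consists of singletons as in the examples following the proposition). The one genuine difference is how the ring identity is obtained: the paper simply cites \cite[Prop~9.13]{BZ2}, whereas you re-derive it via the Boolean-lattice/M\"obius argument resting on the claim that $\pi(c)$ is multiplicity-free with composition factors exactly $\{\langle c'\rangle:c'\le c\}$ for $c$ with pairwise disjoint segments. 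That claim is true, but it \emph{is} essentially the content of the cited result, and your inductive sketch of it is the only soft spot: expanding $\pi(c)=\sum_{c'}\langle c'\rangle\times\langle\Delta_m\rangle$ requires more than the two-segment relation $\langle\Delta\rangle\times\langle\Delta'\rangle=\langle\{\Delta,\Delta'\}\rangle+\langle\Delta\cup\Delta'\rangle$ --- you need that the product is irreducible (equal to $\langle c'+\Delta_m\rangle$) when $\Delta_m$ is unlinked with every segment of $c'$ (this is \cite[\S 8]{BZ2}, Prop.~8.5--8.6), and an actual argument that $\langle c'\rangle\times\langle\Delta_m\rangle$ has length exactly two, with the stated factors, when exactly one segment of $c'$ is linked to $\Delta_m$; your ``bookkeeping plus a derivative computation'' is a plausible plan but not yet a proof of this. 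Since the needed identity is available as a citation, this costs you nothing logically, but be aware that your route amounts to re-proving \cite[Prop~9.13]{BZ2} rather than giving an independent shortcut; if you do carry it out, the payoff is a self-contained, purely combinatorial explanation of why the coefficients are $\pm1$ in the disjoint-segment case (triviality of the multiplicities), which the paper leaves to the reference.
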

\begin{proof}
By \cite[Prop~9.13]{BZ2}, in the Grothendieck ring we have the identity
\[
\langle a\rangle=\sum_{b\le a}(-1)^{|a|-|b|}\pi(b),
\]
so by Lemma~\ref{induced-dim} and Lemma~\ref{exact-seq} we have
\begin{align*}
G_{\langle a\rangle}(X)&=\sum_{b\le a}(-1)^{|a|-|b|}G_{\pi(b)}(X)\\
&=\sum_{b\le a}(-1)^{|a|-|b|}\begin{bmatrix}n\\r_1n_1,\dots,r_mn_m\end{bmatrix}_qX^{\frac{s^2-r_1^2-\dots-r_m^2}{2}n_1^2}\prod_{i=1}^mG_{\langle\Delta_i\rangle}(X).
\end{align*}
Now, applying Proposition~\ref{delta-dim} and simplifying gives the desired result.
\end{proof}
\begin{example}
Some example calculations are:
\begin{align*}
    G_{\langle[\rho,\nu\rho]\rangle^t}(X)&=\begin{bmatrix}2n_1\\n_1\end{bmatrix}_qX^{n_1^2}G_\rho(X)^2\{1-[2!]_{q^{n_1}}^{-1}X^{-n_1}\},\\
    G_{\langle[\rho,\nu^2\rho]\rangle^t}(X)&=\begin{bmatrix}3n_1\\n_1,n_1,n_1\end{bmatrix}_qX^{3n_1^2}G_\rho(X)^3\{1-2[2!]_{q^{n_1}}^{-1}X^{-n_1}+[3!]_{q^{n_1}}^{-1}X^{-3n_1}\}.
\end{align*}
\end{example}

\begin{example}
In the notation of Proposition~\ref{example-prop}, when $n_1=1$ and $\rho=\chi$ is a character,
\[
G_{\langle a\rangle}(X)=\sum_{b\le a}(-1)^{s-|b|}\begin{bmatrix}s\\r_1,\dots,r_m\end{bmatrix}_qX^{\frac12(s^2-\sum_{i=1}^mr_i^2)}.
\]
\end{example}

\begin{rmk}
More generally, for a $\rho$-rigid multiset of segments $a$ with support of size $s$,
\[
G_{\langle a\rangle}(X)=Q(X)\cdot G_\rho(X)^s,
\]
for some polynomial $Q(X)$ determined by the structure of the poset of multisegments on the support of $a$ (as in Example~\ref{bz-example}). In Proposition~\ref{example-prop}, said poset is a Boolean lattice.
\end{rmk}



\section{General theory of local character expansions}\label{section-local-char}
Section~\ref{section-arbitrary} reduces the calculation of growth polynomials of representations to that of supercuspidal representations. To calculate the growth polynomials of supercuspidal representations, we use the \emph{local character expansion}, which tells us the germ of a character.

Throughout this section, \emph{assume $F$ has characteristic zero}. Let $G$ be the group of $F$-rational points of a connected reductive algebraic $F$-group (e.g., $\GL_n(F)$ or $\GL_m(D)$, for a division algebra $D$ over $F$.) Also let $\fg$ be its Lie algebra, so there is an exponential map
\[
\exp_G\colon\fg_0\to G_1,
\]
for some neighborhood $\fg_0$ of $0\in\fg$ and neighborhood $G_1$ of $1\in G$. In fact, $\exp_G$ is bijective for sufficiently small neighborhoods $\fg_0\ni0$ and $G_1\ni1$.

\begin{example}
For $G=\GL_n(F)$ (resp., $\GL_m(D)$), we have $\fg=M_n(F)$ (resp., $M_m(D)$), and for $\fg_0\colonequals M_n(p\sO_F)$ (resp., $M_m(p\sO_D)$) and $G_1\colonequals 1+\fg_0$, there is an isomorphism of groups
\begin{align*}
\fg_0&\to G_1\\
X&\mapsto 1+X+\frac{X^2}2+\cdots.
\end{align*}
Unlike in the archimedean case, $\sum_{n\ge0}\frac{X^n}{n!}$ does not converge on the entire Lie algebra.
\end{example}

Let $Ch_\pi$ denote the character associated to an admissible representation $(\pi,V)$ of $G$. That is, for $f\in C_c^\infty(G)$ a compactly supported function on $G$,
\[
\mathrm{tr}\,\pi(f)=\int_GCh_\pi(g)f(g)dg,
\]
where $\pi(f)=\int_G f(g)\pi(g)dg\colon V\to V$ has finite rank so the trace is well-defined (restrict to a finite-dimensional subspace $\pi(f)(V)\subset W$ such that $\pi(f)(W)\subset W$). Note that $\pi(g)\colon V\to V$ for $g\in G$ need not have finite rank so its ``literal" trace is ill-defined.

For a compactly supported function $f\in C_c^\infty(\fg)$, define the \emph{Fourier transform} as
\[
\widehat f(X)\colonequals \int_\fg f(Y)\cdot\psi(B(X,Y))dY,
\]
where $B(-,-)$ is the Killing form on $\fg$ and $\psi$ is the fixed character of $F$ as defined in Section~\ref{section-terms}.

\begin{defn}
Let $\cN_G$ be the set of nilpotent $\mathrm{Ad}\ G$-orbits in $\fg$, called the nilpotent orbits of $G$. It is partially ordered with respect to the order $\cO_1\le\cO_2$ when $\overline{\cO}_1\subseteq \overline{\cO}_2$, where $\overline{\cO}_1\subset G$ is the Zariski closure of $\overline{\cO}_1$.
\end{defn}

\begin{example}\label{gln-nilp}
For $G=\GL_n(F)$, there is a 1-1 correspondence between the partially ordered sets:
\begin{enumerate}
\item\label{item1} $\cN_G$, the set of nilpotent orbits of $G$;
    \item\label{item2} equivalence classes of parabolic subgroups $P\subset G$; and
    \item\label{item3} partitions of $n$,
\end{enumerate}
where the correspondence between \eqref{item1} and \eqref{item2} reverses order.
Indeed, for $\cO\in\cN_G$ let $x\in\cO$ and consider the parabolic subgroup fixing the flag defined by $x$. Conversely, for any parabolic $P\subset G$, let $N\subset P$ be the unipotent radical and let $\mathfrak n\subset\fg$ be its Lie algebra, so $N=I_n+\mathfrak n$. Then, consider the unique nilpotent orbit $\cO$ such that $\cO\cap\mathfrak n\subset\mathfrak n$ is open. For each partition $\lambda$ of $n$, denote by $\cO_\lambda\in\cN_G$ the corresponding nilpotent orbit. Finally, the correspondence between \eqref{item2} and \eqref{item3} is demonstrated in the beginning of Section~\ref{section-arbitrary}.There is a similar correspondence for $G'=\GL_m(D)$.

Note that when $\lambda$ is the partition $n=n_1+\cdots+n_r$, then for $N_\lambda\subset P_\lambda$ the nilpotent radical,
\[
\dim(\cO_\lambda)=2\dim(N_\lambda)=\sum_{i\ne j}n_in_j=n^2-n_1^2-\cdots-n_r^2.
\]
\end{example}

\begin{lemma}
Any nilpotent orbit $\cO$ in $G$ has a $G$-invariant Radon measure $\mu_\cO$. Moreover, letting ${\widehat\mu}_\cO(f)\colonequals \mu_\cO(\widehat f)$ for $f\in C_c^\infty(\fg)$ (where $\widehat f$ denotes the Fourier transform with respect to the self-dual measure), the distribution $\widehat\mu_\cO$ is represented by a locally integrable function, again denoted $\widehat\mu_\cO$.
\end{lemma}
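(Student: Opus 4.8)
The plan is to deduce both assertions from classical results on $p$-adic orbital integrals, the standing hypothesis $\operatorname{char}F=0$ entering in an essential way.

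For the existence of $\mu_\cO$, fix $x\in\cO$ and identify $\cO$ with the homogeneous space $G/Z_G(x)$, where $Z_G(x)$ is the centralizer of $x$; the orbit is a locally closed, hence locally compact, subset of $\fg$, and $G/Z_G(x)\to\cO$ is a homeomorphism. A homogeneous space $G/H$ carries a nonzero $G$-invariant Radon measure, unique up to scalar, exactly when $\delta_G|_H=\delta_H$. To verify this for $H=Z_G(x)$ I would invoke the Jacobson--Morozov theorem (available since $\operatorname{char}F=0$) to embed $x$ in an $\mathfrak{sl}_2$-triple $(x,h,y)$; then $Z_G(x)$ is a semidirect product of the reductive group $Z_G(x,h,y)$ with a unipotent group, and a standard computation with the grading defined by $h$ shows $Z_G(x)$ is unimodular and $\delta_G|_{Z_G(x)}$ is trivial. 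This produces an invariant measure $\mu_\cO$ on $\cO$. It then remains to see that $f\mapsto\int_\cO f|_\cO\,d\mu_\cO$ is defined on all of $C_c^\infty(\fg)$, i.e. that the integral converges; this is Rao's convergence theorem for nilpotent orbital integrals. (For $G=\GL_n(F)$ or $\GL_m(D)$ one may instead argue this convergence directly by induction, using the explicit partition description of nilpotent orbits.)

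For the second assertion, recall that over a $p$-adic field the Fourier transform preserves $C_c^\infty(\fg)$, so $\widehat\mu_\cO(f)=\mu_\cO(\widehat f)$ is defined and is a $G$-invariant distribution. That $\widehat\mu_\cO$ is represented by a locally integrable function, locally constant on the regular set, is Harish-Chandra's theorem on Fourier transforms of orbital integrals, which I would prove along the usual lines: (i) observe that, since nilpotent orbits in our groups are cones, $\mu_\cO$ and hence $\widehat\mu_\cO$ are homogeneous of a definite degree under the dilation action of $F^\times$ on $\fg$; (ii) away from $0$, use semisimple descent to reduce local integrability of $\widehat\mu_\cO$ near a semisimple point to the same statement on the (smaller) centralizer, so that induction on $\dim G$ applies; (iii) near $0$, combine the homogeneity from (i) with Harish-Chandra's regularity theorem for invariant eigendistributions to control the singularity along the nilpotent cone.

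Step (iii) is the heart of the matter and the step I expect to be the main obstacle: it is exactly where $\operatorname{char}F=0$ and the full force of Harish-Chandra's theory of admissible invariant distributions are needed, and reproving it is well outside the scope here, so I would cite it (Harish-Chandra's Queen's notes, together with Rao for the first part).
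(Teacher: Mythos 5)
Your proposal is sound, and it is worth noting that the paper itself offers no proof of this lemma at all: it is stated as background, with the substance implicitly delegated to the classical literature (Rao's convergence theorem for nilpotent orbital integrals and Harish-Chandra's representability theorem from the notes cited as \cite{harish-chandra}). Your sketch correctly identifies exactly these two analytic inputs as the real content, and correctly flags that $\operatorname{char}F=0$ is where they are needed, so citing them rather than reproving them is entirely consistent with the paper's treatment. Two small remarks. First, for the existence of $\mu_\cO$ you can avoid Jacobson--Morozov altogether: for $h\in Z_G(x)$ the form $(\bar Y,\bar Z)\mapsto B(x,[Y,Z])$ on $\fg/\fg_x$ is nondegenerate and $Z_G(x)$-invariant, so $|\det(\mathrm{Ad}(h)|_{\fg/\fg_x})|=1$, and since $G$ is unimodular this forces $|\det(\mathrm{Ad}(h)|_{\fg_x})|=1$; thus $\delta_{Z_G(x)}$ and $\delta_G|_{Z_G(x)}$ are both trivial and the invariant measure on $G/Z_G(x)$ exists. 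Your $\mathfrak{sl}_2$-triple argument reaches the same conclusion, just with more machinery. Second, you are right to separate existence of the invariant measure on the orbit from its extension to a distribution on all of $C_c^\infty(\fg)$ (the orbit is only locally closed), and right that the latter is Rao's theorem; for $G=\GL_n(F)$ or $\GL_m(D)$ this convergence can also be checked by hand from the partition description of orbits, as you suggest, and that is the only point where a genuinely self-contained argument would be short enough to include. The outline of Harish-Chandra's proof in your steps (i)--(iii) is accurate as a roadmap, but since the paper leans on the cited notes for Proposition~\ref{howe-local-character} anyway, deferring step (iii) to that reference is the appropriate level of detail here.
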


\begin{example}
For $f\in C_c^\infty(\fg)$ we have $\mu_{\{0\}}(f)=f(0)$, so $\widehat\mu_{\{0\}}=1$.
\end{example}

Now, recall Howe and Harish-Chandra's local character expansion \cite{harish-chandra}:

\begin{prop}\label{howe-local-character}
Let $(\pi,V)$ be an irreducible admissible representation of $G$. Then, there exists a neighborhood $U$ of $0\in\fg$ such that for regular $X\in U$,
\[
Ch_\pi(\exp(X))=\sum_{\cO}c_\cO(\pi)\widehat{\mu}_\cO(X),
\]
or, equivalently, for a function $f$ supported on $\exp(U)$,
\[
\mathrm{tr}\,\pi(f)=\sum_\cO c_\cO(\pi)\mu_\cO(\widehat{f\circ\exp}),
\]
where the sum runs over nilpotent orbits of $\fg$, and the constant $c_\cO(\pi)\in\C$ only depends on $\pi$ and $\cO$.
\end{prop}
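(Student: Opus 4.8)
The plan is to transport the character of $\pi$ from a neighborhood of $1\in G$ to a neighborhood of $0\in\fg$ via $\exp$, and then to recognize the resulting $\mathrm{Ad}(G)$-invariant distribution, near $0$, as a finite combination of the $\widehat\mu_\cO$. The starting point is Harish-Chandra's theorem that $Ch_\pi$ is represented by a locally integrable function on $G$ which is locally constant on the regular locus; this makes the left-hand side of the asserted identity meaningful for regular $X$ near $0$, and, after correcting by the Jacobian of $\exp$ (which equals $1$ to the order that matters near $0$), yields an $\mathrm{Ad}(G)$-invariant, locally integrable function $\theta_\pi$ on a neighborhood $\omega_0$ of $0$ in $\fg$ with $\theta_\pi(X)=Ch_\pi(\exp X)$ for regular $X\in\omega_0$. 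The two displayed formulas are equivalent once the function form is established: for $f$ supported on $\exp(\omega_0)$, the change of variables $g=\exp(X)$ turns $\mathrm{tr}\,\pi(f)=\int_G Ch_\pi(g)f(g)\,dg$ into $\int_\fg\theta_\pi(X)(f\circ\exp)(X)\,dX$, and, using $\widehat\mu_\cO(h)=\mu_\cO(\widehat h)$, the identity $\theta_\pi=\sum_\cO c_\cO(\pi)\widehat\mu_\cO$ near $0$ unwinds to $\mathrm{tr}\,\pi(f)=\sum_\cO c_\cO(\pi)\mu_\cO(\widehat{f\circ\exp})$.

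So the core task is to show that the germ at $0$ of $\theta_\pi$ lies in the span of the germs at $0$ of the locally integrable functions $\widehat\mu_\cO$, $\cO\in\cN_G$. I would build this from two structural inputs. First, a finiteness statement of Howe's-conjecture type (established for $\GL_n$ by Howe \cite{howe}, in general by later work): there is a neighborhood $\omega\subseteq\omega_0$ of $0$ and a lattice $L\subset\fg$ such that the space of $\mathrm{Ad}(G)$-invariant distributions supported in $\mathrm{Ad}(G)\cdot\omega$, restricted to test functions supported in $L$, is finite-dimensional; combined with Harish-Chandra's descent, this confines the germ at $0$ of any character-type invariant distribution, in particular $\theta_\pi$, to one fixed finite-dimensional space $\mathcal D_0$. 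Second, Harish-Chandra's analysis \cite{harish-chandra} of $\mathrm{Ad}(G)$-invariant distributions on $\fg$, which identifies $\mathcal D_0$ with $\mathrm{span}\{\widehat\mu_\cO:\cO\in\cN_G\}$ and makes this compatible with the dilation action $X\mapsto tX$ of $F^\times$ on $\fg$ (hence on distributions): this action preserves $\mathcal D_0$, and the $\widehat\mu_\cO$ are its homogeneous generators, $\widehat\mu_{\{0\}}=1$ being the unique one of degree $0$ and the rest carrying a strictly negative degree determined by $\dim\cO$. Granting these, $\theta_\pi=\sum_\cO c_\cO(\pi)\widehat\mu_\cO$ near $0$, with the $c_\cO(\pi)$ the coordinates of the germ of $\theta_\pi$ in this representation-independent basis; in particular each $c_\cO(\pi)$ depends only on $\pi$ and $\cO$.

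The main obstacle is exactly this second input, together with the Lemma quoted just before the statement: that each $\widehat\mu_\cO$ is represented by a locally integrable function, and that the $\widehat\mu_\cO$ exhaust the homogeneous invariant distributions in the range the character germ can occupy. Both rest on Harish-Chandra's hard analysis of $\mathrm{Ad}(G)$-invariant distributions on $\fg$ --- local integrability of orbital-integral Fourier transforms, the geometry of the nilpotent cone and its invariant measures, and the homogeneity bookkeeping that excludes spurious homogeneous invariant distributions --- and on the Howe-type finiteness that supplies the a priori finite-dimensionality of $\mathcal D_0$. Once those are in hand, the remaining steps (the transfer via $\exp$, the Jacobian normalization, Fourier inversion, and the equivalence of the two forms of the expansion) are routine.
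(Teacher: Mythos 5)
The paper does not prove this statement: it is recalled as the Harish-Chandra--Howe local character expansion and attributed to \cite{harish-chandra} (with \cite{howe} treating $\GL_n$), so there is no internal argument to compare against. Your sketch is a faithful outline of the standard proof in those references: transfer of $Ch_\pi$ to $\fg$ via $\exp$, Harish-Chandra's local integrability and local constancy on the regular set, Howe's finiteness statement giving a finite-dimensional space of invariant germs at $0$, and the identification of that space with $\mathrm{span}\{\widehat\mu_\cO:\cO\in\cN_G\}$ via homogeneity under dilations, after which the two displayed forms are equivalent by the change of variables and $\widehat\mu_\cO(h)=\mu_\cO(\widehat h)$. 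You correctly flag that the genuinely hard inputs (local integrability of $\widehat\mu_\cO$, the finiteness theorem, and the homogeneity classification) are being cited rather than reproved, which is exactly the footing on which the paper itself uses the result; as a blind reconstruction this is as complete as one can reasonably ask without reproducing Harish-Chandra's analysis.
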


\begin{lemma}\label{induced-char}
Let $\cO$ be a nilpotent orbit in $\GL_n(F)$ or $\GL_m(D)$, and let $P$ be the corresponding parabolic subgroup. Then, for $X\in\fg$ regular in a sufficiently small neighborhood of $0$,
\[
\widehat\mu_\cO(X)=Ch_{\Ind_P^G(1_P)}(\exp X).
\]
In particular, for $f\in C_c^\infty(\fg)$ supported on a sufficiently small neighborhood of $0$,
\[
\mu_\cO(\widehat f)=\mathrm{tr} \Ind_P^G(1_P)(f).
\]
\end{lemma}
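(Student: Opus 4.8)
The plan is to identify $\widehat\mu_\cO$ with the character of $\Ind_P^G(1_P)$ by recognizing both as the Fourier transform of the orbital integral over the nilpotent orbit $\cO$, and then appeal to the well-known formula relating induced characters to orbital integrals. First I would recall that, by Example~\ref{gln-nilp}, the nilpotent orbit $\cO = \cO_\lambda$ corresponds to the parabolic $P = P_\lambda$ whose unipotent radical $N_\lambda$ has Lie algebra $\mathfrak n_\lambda$, with $\cO_\lambda \cap \mathfrak n_\lambda$ open and dense in $\mathfrak n_\lambda$; in particular $\dim \cO_\lambda = 2\dim N_\lambda = 2\dim\mathfrak n_\lambda$. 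The key classical input is the van Dijk/Rodier-type formula expressing the character of a representation parabolically induced from $P$ in terms of integration over $G/P$: for $f \in C_c^\infty(\fg)$ supported near $0$,
\[
\mathrm{tr}\,\Ind_P^G(1_P)(f) = \int_{G/P}\int_{\mathfrak p_\lambda} f(\mathrm{Ad}(g)Y)\,dY\,dg,
\]
up to the normalization of measures (here using that $\Ind$ is ordinary, not normalized, induction so $1_P$ pulls back to the constant function).

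The next step is to compute this double integral. Since $\mathfrak p_\lambda = \mathfrak l_\lambda \oplus \mathfrak n_\lambda$ where $\mathfrak l_\lambda$ is the Levi part, and since we only care about $f$ supported in a small neighborhood of $0$ where the semisimple directions contribute a fixed finite volume, the dominant contribution comes from the nilpotent directions $\mathfrak n_\lambda$; the $G$-average of the characteristic-measure of $\mathfrak n_\lambda$ is, up to a constant and because $\cO_\lambda$ meets $\mathfrak n_\lambda$ in an open dense set, precisely a multiple of the invariant measure $\mu_{\cO_\lambda}$ on the nilpotent orbit. After Fourier transform this yields $\mu_{\cO_\lambda}(\widehat f) = c\cdot \mathrm{tr}\,\Ind_{P_\lambda}^G(1_{P_\lambda})(f)$ for some constant $c$; one then fixes the normalization of $\mu_{\cO_\lambda}$ (which was only asserted to exist up to scalar in the preceding lemma) so that $c = 1$. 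The two displayed equations in the statement are then equivalent by the defining relation $\mathrm{tr}\,\pi(f) = \int_G Ch_\pi(g) f(g)\,dg$ together with the change of variables $g = \exp X$, exactly as in the passage between the two forms of Proposition~\ref{howe-local-character}.

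The main obstacle I anticipate is the bookkeeping of measure normalizations and the precise sense in which ``$\cO_\lambda \cap \mathfrak n_\lambda$ open'' forces the orbital integral to coincide with the induced-character integral: one must check that the complement $\mathfrak n_\lambda \setminus \cO_\lambda$ has measure zero and that pieces of $\cO_\lambda$ outside $\mathrm{Ad}(G)\mathfrak n_\lambda$ do not intersect a small neighborhood of $0$ in a way that would add spurious terms — but since all of $\cO_\lambda$ is conjugate into $\mathfrak n_\lambda$ and we work near $0$, this is controlled. A cleaner route, which I would actually adopt for the writeup, is to cite the standard result (Harish-Chandra, as extended by Rodier and in the form used for $\GL_n$) that $\widehat\mu_{\cO_\lambda}$ equals the character of $\Ind_{P_\lambda}^G(1)$ restricted to a neighborhood of the identity, with the normalization of $\mu_{\cO_\lambda}$ chosen to make this hold; then the lemma is essentially a citation plus the routine exponential change of variables to pass between the $G$-side and $\fg$-side formulations. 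This also makes transparent why the same statement holds verbatim for $\GL_m(D)$, since the argument only uses the orbit–parabolic correspondence of Example~\ref{gln-nilp}, which was noted to hold there as well.
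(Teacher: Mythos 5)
Your preferred route---citing the standard result and fixing the normalization of $\mu_\cO$ so the identity holds---is exactly what the paper does: its proof is a one-line citation of \cite[Lem~5]{howe} for $\GL_n(F)$, with the remark that the same argument applies to $\GL_m(D)$. Your preliminary sketch is a fair outline of that argument (one quibble: the passage from the integral over $\mathfrak p_\lambda$ to the orbital measure is via Fourier duality, the annihilator of $\mathfrak p_\lambda$ under the Killing form being $\mathfrak n_\lambda$, not a ``dominant contribution of the nilpotent directions''), so nothing further is needed.
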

\begin{proof}
For $\GL_n(F)$, this is \cite[Lem~5]{howe}, and the same argument works for $\GL_m(D)$.
\end{proof}

Let $(\pi,V)$ be an irreducible admissible representation of $\GL_n(F)$. Then, we may express $\dim(\pi^{K_N})$ as a polynomial of $q^{N-1}$, as in \cite{savin}:
\begin{cor}\label{fixed-poly} Let $\pi$ be an irreducible admissible representation of $\GL_n(F)$. Then,
\[
G_\pi(X)=\sum_{\cO\in\cN_G} c_\cO(\pi) \begin{bmatrix}n\\n_1,\dots,n_r\end{bmatrix}_q X^{\frac12\dim\cO},
\]
where $n=n_1+\cdots+n_r$ is the partition corresponding to $\cO$ under the identification in Example~\ref{gln-nilp}.
\end{cor}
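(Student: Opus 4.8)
The strategy is to compute $\dim(\pi^{K_N})$ as a trace of a projection operator and then feed in the local character expansion. First I would recall that $\dim(\pi^{K_N}) = \operatorname{tr}\pi(e_{K_N})$, where $e_{K_N} = \frac{1}{\operatorname{vol}(K_N)}\mathbf{1}_{K_N}$ is the normalized idempotent attached to $K_N$; the operator $\pi(e_{K_N})$ is the projection onto $\pi^{K_N}$, which is finite-dimensional by admissibility, so its trace equals the dimension. Since $K_N \subset \exp(\fg_0)$ for $N$ large, the function $f_N \colonequals e_{K_N}$ is supported on a small neighborhood of $1 \in G$, so Proposition~\ref{howe-local-character} applies once $N$ is large enough: $\operatorname{tr}\pi(f_N) = \sum_{\cO} c_\cO(\pi)\,\mu_\cO(\widehat{f_N \circ \exp})$.

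Next I would evaluate each term $\mu_\cO(\widehat{f_N\circ\exp})$. By Lemma~\ref{induced-char} this equals $\operatorname{tr}\operatorname{Ind}_P^G(1_P)(f_N)$, where $P = P_\lambda$ is the parabolic corresponding to $\cO$; but $\operatorname{tr}\operatorname{Ind}_P^G(1_P)(f_N) = \dim\big((\operatorname{Ind}_P^G 1_P)^{K_N}\big)$ by the same idempotent argument. Now the space of $K_N$-fixed vectors in $\operatorname{Ind}_P^G(1_P) = C^\infty(P\backslash G)$ is exactly the space of functions on $P\backslash G/K_N$, which has dimension $|P_\lambda\backslash G/K_N|$. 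Lemma~\ref{parabolic-size} identifies this with $\begin{bmatrix}n\\ n_1,\dots,n_r\end{bmatrix}_q (q^{N-1})^{(n^2 - n_1^2 - \cdots - n_r^2)/2}$, and by Example~\ref{gln-nilp} the exponent $(n^2-n_1^2-\cdots-n_r^2)/2$ is precisely $\tfrac12\dim\cO$. Substituting, $\dim(\pi^{K_N}) = \sum_{\cO} c_\cO(\pi)\begin{bmatrix}n\\ n_1,\dots,n_r\end{bmatrix}_q (q^{N-1})^{\frac12\dim\cO}$ for $N\gg0$, which is exactly the asserted formula for $G_\pi(X)$ evaluated at $X = q^{N-1}$; since this holds for all large $N$ and both sides are polynomials in $q^{N-1}$, the polynomial identity follows, and in particular $G_\pi$ exists as claimed.

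The one genuine subtlety — and the step I would be most careful about — is the interchange of the (finite) sum over nilpotent orbits with the evaluation, together with the uniformity in $N$: Proposition~\ref{howe-local-character} gives a \emph{single} neighborhood $U$ of $0$ that works for all regular $X$, and I must check that $f_N$ is supported in $\exp(U)$ for all sufficiently large $N$ (true since the $K_N$ shrink to $\{1\}$), so that one fixed expansion applies simultaneously for all large $N$. One should also note that $f_N = e_{K_N}$ is not literally supported on the \emph{regular} locus, but this is harmless: the identity $\operatorname{tr}\pi(f) = \sum_\cO c_\cO(\pi)\mu_\cO(\widehat{f\circ\exp})$ is the integrated form of the character identity and holds for all $f$ supported on $\exp(U)$, which is exactly the second formulation in Proposition~\ref{howe-local-character}. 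Everything else is bookkeeping: matching the parabolic $P$ attached to $\cO$, invoking Lemma~\ref{induced-char} and Lemma~\ref{parabolic-size}, and reading off the exponent from Example~\ref{gln-nilp}.
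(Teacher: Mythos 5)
Your proposal is correct and follows essentially the same route as the paper: write $\dim(\pi^{K_N})=\operatorname{tr}\pi(e_{K_N})$, apply the local character expansion together with Lemma~\ref{induced-char} to convert each orbital term into $\operatorname{tr}(\Ind_P^G(1_P))(e_{K_N})=\dim\big((\Ind_P^G 1_P)^{K_N}\big)$, and evaluate this via the double-coset count of Lemma~\ref{parabolic-size} (the paper routes this last step through Lemma~\ref{induced-dim}, whose proof is exactly that count) and the dimension formula of Example~\ref{gln-nilp}. Your added care about the fixed neighborhood $U$ and the integrated form of the expansion matches the paper's implicit use of ``$N\gg0$,'' so there is no gap.
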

\begin{proof}
Let $V$ be the underlying vector space for $\pi$. Let $e_{K_N}$ be the normalized characteristic function, i.e.,
\[
e_{K_N}(x)=\begin{cases}\frac1{\mathrm{vol}(K_N)}&\text{if }x\in K_N\\0&\text{if }x\notin K_N.\end{cases}
\]
Then, $\pi(e_{K_N})\colon\pi\to\pi$ is a projection to $\pi^{K_N}$, so
\[
\dim(\pi^{K_N})=\mathrm{tr}\,\pi(e_{K_N}).
\]
Now by Proposition~\ref{howe-local-character} and Lemma~\ref{induced-char}, for any integer $N\gg0$,
\begin{align*}
\mathrm{tr}\,\pi(e_{K_N})&=\frac1{\mathrm{vol}(K_N)}\int_{K_N}Ch_\pi(g)dg\\
&=\sum_\cO c_\cO(\pi)\frac1{\mathrm{vol}(K_N)}\int_{K_N}Ch_{\Ind_P^G(1)}(g)dg\\
&=\sum_\cO c_\cO\mathrm{tr}\,(\Ind_P^G(1_P))(e_{K_N}).
\end{align*}
Now, by Lemma~\ref{induced-dim} and Example~\ref{gln-nilp},
\begin{align*}
\mathrm{tr}\,(\Ind_P^G(1_P))(e_{K_N})&=\dim(\Ind_P^G(1_P)^{K_N})\\
&=\begin{bmatrix}n\\n_1,\dots,n_r\end{bmatrix}_qq^{\frac12\dim\cO(N-1)}.\qedhere
\end{align*}
\end{proof}

\begin{rmk}
More generally, let $(\pi,V)$ be an admissible irreducible representation of $\GL_n(F)$, let $N\gg0$, and let $\varphi$ be an arbitrary irreducible representation of $K_N$. Then, by \cite[Prop~4.4]{Bush-Henn}, 
\[
e_\varphi(x)\colonequals\begin{cases}\frac{\dim\varphi}{\mathrm{vol}(K_N)}\mathrm{tr}\, (\varphi(x^{-1}))&\text{if }x\in K_N\\
0&\text{if }x\notin K_N\end{cases}
\]
is such that $\pi(e_\varphi)\colon V\to V$ is a projection to the $\varphi$-isotypic component, so a similar argument shows
\[
\dim(\hom_{K_N}(\varphi,\pi|_{K_N}))=\sum_{\cO\in\cN_G} c_\cO(\pi) \dim(\varphi^{K_N\cap P})\begin{bmatrix}n\\n_1,\dots,n_r\end{bmatrix}_qq^{\frac12\dim\cO(N-1)}.
\]
Thus, the coefficients $c_\cO$ completely determine the structure of the $K_N$-representation $\pi|_{K_N}$.
\end{rmk}

It is known that:
\begin{prop}\label{cusp-asymp} For $\rho$ a supercuspidal representation of $\GL_n(F)$,
\[
G_\rho(X)=[n!]_qX^{\frac{n(n-1)}2}+(\text{lower order terms}).
\]
\end{prop}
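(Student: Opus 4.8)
The plan is to combine Corollary~\ref{fixed-poly} with the known structure of the local character expansion of a supercuspidal representation, the key input being that the ``leading'' nilpotent orbit---the regular orbit $\cO_{\mathrm{reg}}$ corresponding to the partition $n=n$ (a single part), whose dimension is $n^2-n$---appears with coefficient $c_{\cO_{\mathrm{reg}}}(\rho)=1$. Granting this, Corollary~\ref{fixed-poly} immediately gives
\[
G_\rho(X)=\sum_{\cO\in\cN_G}c_\cO(\rho)\begin{bmatrix}n\\n_1,\dots,n_r\end{bmatrix}_q X^{\frac12\dim\cO},
\]
and since $\dim\cO\le n^2-n$ for every nilpotent orbit $\cO$, with equality only for the regular orbit (this is the fact that the regular nilpotent orbit is the unique orbit of maximal dimension, and $x\mapsto x^2$ convexity as in the proof of Lemma~\ref{main-lemma} shows $n^2-n_1^2-\cdots-n_r^2$ is maximized at $r=1$), the term of top degree is
\[
c_{\cO_{\mathrm{reg}}}(\rho)\begin{bmatrix}n\\n\end{bmatrix}_q X^{\frac12(n^2-n)}=1\cdot[n!]_q X^{\frac{n(n-1)}2},
\]
using $\begin{bmatrix}n\\n\end{bmatrix}_q=[n!]_q$. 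All remaining orbits contribute in strictly smaller degree, so they are absorbed into ``lower order terms.''

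The substantive step, then, is the evaluation $c_{\cO_{\mathrm{reg}}}(\rho)=1$. I would deduce this from the interaction between the local character expansion and the Whittaker model: for an irreducible admissible representation $\pi$, the coefficient of the regular nilpotent orbit equals the dimension of the space of Whittaker functionals (the ``Rodier germ'' result), and supercuspidal representations of $\GL_n(F)$ are generic, hence have a one-dimensional Whittaker space. Concretely one pairs the character expansion $Ch_\rho(\exp X)=\sum_\cO c_\cO(\rho)\widehat\mu_\cO(X)$ against a suitable sequence of test functions that isolates the regular orbit---for instance integrating the character against the functions $e_{K_N}$ twisted by a generic character of a unipotent subgroup, as in Rodier's argument \cite{rodier2}---and uses that for supercuspidal $\rho$ the resulting Whittaker-type integral stabilizes to the value $1$. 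Alternatively, since Howe's result $\dim_{GK}(\rho)=\frac{n(n-1)}2$ is already cited in the introduction, one only needs the leading \emph{coefficient}: and $\dim(\rho^{K_N})/q^{\frac{n(n-1)}2(N-1)}\to[n!]_q$ is precisely the statement quoted from \cite{howe} and \cite{rodier2} in the introduction, so the proof can simply be a citation assembling Corollary~\ref{fixed-poly} with \cite[Thm~3]{howe} (giving $c_{\cO_{\mathrm{reg}}}(\rho)=1$) and \cite{rodier2}.

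The main obstacle is purely expository: deciding how self-contained to make the identification of $c_{\cO_{\mathrm{reg}}}(\rho)$. A fully self-contained argument requires the Rodier/Mœglin--Waldspurger theory relating wavefront sets, Whittaker models, and leading nilpotent coefficients, which is substantially deeper than anything else in this section; I expect the paper will instead cite \cite{howe,rodier2} for $c_{\cO_{\mathrm{reg}}}(\rho)=1$ and present the proposition as an immediate corollary of Corollary~\ref{fixed-poly}. The only genuine computation---that the regular orbit is the unique top-dimensional one and $\begin{bmatrix}n\\n\end{bmatrix}_q=[n!]_q$---is routine and parallels the convexity observation already used in Lemma~\ref{main-lemma}.
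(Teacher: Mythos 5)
Your proposal is correct and matches the paper's approach: the paper's proof is exactly the citation you anticipate, namely Corollary~\ref{fixed-poly} combined with \cite[Thm~3]{howe}, with the regular orbit being the unique one of dimension $n^2-n$ and $c_{\cO_{\mathrm{reg}}}(\rho)=1$ supplying the leading coefficient $[n!]_q$. The extra discussion of Whittaker functionals and Rodier's germ result is a valid way to see where $c_{\cO_{\mathrm{reg}}}(\rho)=1$ comes from, but the paper does not develop it and simply cites Howe.
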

\begin{proof}
Follows from \cite[Thm~3]{howe}, together with Corollary~\ref{fixed-poly}.
\end{proof}

Moreover, in for certain supercuspidal representations, the entire growth polynomial is known \cite[Prop~5.3]{murnaghan}:

\begin{prop}\label{murn}
Let $\rho$ be the supercuspidal representation corresponding to the generic character $(E/F,\theta)$ where $\theta=(\chi\circ N_{E/F})\phi$ for some generic character $\phi$ of $E$ over $F$, where $\phi$ has level $j$ (i.e., $j$ is the minimal integer such that $\phi$ is trivial on $1+\p_E^j$). Then,
\begin{enumerate}
    \item If $E/F$ is unramified,
    \[
    G_\rho(X)=\sum_\cO(-1)^{n+r}n(r-1)!w_\cO^{-1}q^{(n^2-n-\dim\cO)j/2}\begin{bmatrix}n\\n_1,\dots,n_r\end{bmatrix}_qX^{\frac12\dim\cO}
    \]
    \item If $E/F$ is totally ramified,
    \[
    G_\rho(X)=\sum_\cO(-1)^{n+r}(r-1)!w_\cO^{-1}\frac q{q-1}(r-\sum_{i=1}^rq^{-n_i})q^{\frac{(n^2-n-\dim\cO)j}{2n}}\begin{bmatrix}n\\n_1,\dots,n_r\end{bmatrix}_qX^{\frac12\dim\cO},
    \]
\end{enumerate}
where $w_\cO$ is the number of permutations of $r$ letters which fix the $r$-tuple $(n_1,\dots,n_r)$.
\end{prop}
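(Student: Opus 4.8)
The plan is to derive this from Corollary~\ref{fixed-poly}, which already expresses $G_\rho(X)$ as $\sum_{\cO} c_\cO(\rho)\begin{bmatrix}n\\n_1,\dots,n_r\end{bmatrix}_q X^{\frac12\dim\cO}$, so the entire content is the identification of the local character expansion coefficients $c_\cO(\rho)$ for these particular supercuspidal representations. Since the statement is attributed to \cite[Prop~5.3]{murnaghan}, the right strategy is to invoke Murnaghan's computation of the $c_\cO(\rho)$ and simply translate her normalization into ours. First I would recall the construction: $\rho$ is built from the generic character $(E/F,\theta)$ with $\theta=(\chi\circ N_{E/F})\phi$, realized (à la Howe/Moy, or Bushnell--Kutzko) as a compactly induced representation from an open subgroup attached to a maximal torus $E^\times\hookrightarrow\GL_n(F)$; the level $j$ of $\phi$ controls the depth, and hence the size of the neighborhood on which the character expansion is valid and the scaling of each $\widehat\mu_\cO$ term.

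The key steps, in order: (1) State precisely Murnaghan's formula for $c_\cO(\rho)$ in the unramified and totally ramified cases — these are the coefficients $(-1)^{n+r}n(r-1)!w_\cO^{-1}q^{(n^2-n-\dim\cO)j/2}$ and $(-1)^{n+r}(r-1)!w_\cO^{-1}\frac{q}{q-1}\bigl(r-\sum_i q^{-n_i}\bigr)q^{(n^2-n-\dim\cO)j/(2n)}$ respectively, where the sum is over nilpotent orbits $\cO$ indexed by partitions $n=n_1+\cdots+n_r$ and $w_\cO$ is the stabilizer order of the tuple $(n_1,\dots,n_r)$ in $S_r$. (2) Substitute these into Corollary~\ref{fixed-poly}. (3) Check the edge case $\cO=\{0\}$ (partition $n=1+\cdots+1$, $r=n$): here $\dim\cO=0$, $\widehat\mu_{\{0\}}=1$, $c_{\{0\}}(\rho)=\dim(\rho^{K_N})$-type normalization should collapse correctly, and one verifies consistency with Proposition~\ref{cusp-asymp} at the top orbit $\cO=\cO_{(n)}$, where $r=1$, $w_\cO=1$, $\dim\cO=n^2-n$, giving leading coefficient $[n!]_q$ as required. (4) Record that in the totally ramified case the exponent on $q$ is divided by $n$ because the depth of $\rho$ is $j/n$ rather than $j$ (the conductor of $\phi$ pulled back through $N_{E/F}$ scales by the ramification index $e=n$), which accounts for the only structural difference between the two formulas besides the factor $\frac{q}{q-1}(r-\sum q^{-n_i})$ replacing $n$.

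The main obstacle is purely bookkeeping: matching Murnaghan's normalization of the invariant measures $\mu_\cO$ and of the character against ours. Murnaghan works with a fixed Haar measure and a fixed additive character, and the coefficients $c_\cO$ depend on both; one must confirm that the self-dual measure and the level-zero character $\psi$ fixed in Section~\ref{section-terms} are compatible with her conventions, or else track the comparison constant (a power of $q$ times a ratio of discriminants of $E/F$) and verify it is absorbed into $q^{(n^2-n-\dim\cO)j/2}$. A secondary subtlety is that Corollary~\ref{fixed-poly} requires $N\gg 0$ relative to the depth of $\rho$, so one should note that the formula for $G_\rho(X)$ is valid for $N$ exceeding roughly $j$ (resp.\ $j/n$), which is exactly where Murnaghan's local expansion converges; no further argument is needed since the polynomial, once known to exist, is determined by its values for large $N$. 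Given all this, the proof reduces to the single line already in the excerpt: ``Follows from \cite[Prop~5.3]{murnaghan} together with Corollary~\ref{fixed-poly},'' with the measure-normalization check being the only thing one might want to spell out.
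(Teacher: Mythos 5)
Your proposal is correct and follows exactly the paper's route: the published proof is the one-liner you end with, namely substituting Murnaghan's computation of the coefficients $c_\cO(\rho)$ from \cite[Prop~5.3]{murnaghan} into Corollary~\ref{fixed-poly}. The extra bookkeeping you flag (measure/character normalization and the $N\gg0$ range) is a reasonable elaboration but not something the paper spells out.
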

\begin{proof}
Follows from \cite[Prop~5.3]{murnaghan}, together with Corollary~\ref{fixed-poly}.
\end{proof}

\begin{rmk}
For $\GL_\ell(F)$ with $\ell$ prime, this result suffices to give an explicit formula for all supercuspidal representations.
\end{rmk}

\section{Under Langlands functoriality}\label{section-functoriality}
Again, throughout this section, \emph{assume $F$ has characteristic zero}.


\subsection{Jacquet-Langlands correspondence}


Recall the Jacquet-Langlands correspondence \cite{JL}:
\begin{prop}\label{jl}
Let $D$ be a division algebra of dimension $d^2$ over $F$, and let $E^2(\GL_n(F))$ (resp., $E^2(\GL_m(D))$) be the set of irreducible square-integrable representations of $\GL_n(F)$ (resp, $\GL_m(D)$), where $n=md$. Then, there exists a bijection
\begin{align*}
E^2(\GL_n(F))&\to E^2(\GL_m(D))\\
\pi&\mapsto \pi_D
\end{align*}
such that for $g'\in\GL_m(D)$ a regular element, whose conjugacy class corresponds to a regular element $g\in\GL_n(F)$, then
\[
(-1)^mCh_{\pi_D}(g')=(-1)^nCh_\pi(g).
\]
\end{prop}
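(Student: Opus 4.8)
The plan is to deduce this from a comparison of the simple trace formulas of $\GL_n$ and of its inner form $\GL_m(D)$ over a global field, in the style of \cite{JL} (the case $d=2$, $m=1$) and its generalizations by Deligne--Kazhdan--Vign\'eras, Rogawski and Badulescu. The first ingredient is local and geometric: since $\GL_m(D)$ is an inner form of $\GL_n(F)$, taking reduced characteristic polynomials over $F$ identifies the regular semisimple conjugacy classes of $\GL_m(D)$ with a subset of those of $\GL_n(F)$, namely those whose connected centralizer is a product of field extensions of $F$ that embed into $\GL_m(D)$ (for elliptic classes, those given by degree-$n$ extensions $E/F$ that split $D$). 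I would call $f\in C_c^\infty(\GL_n(F))$ and $f'\in C_c^\infty(\GL_m(D))$ \emph{associated} if their regular semisimple orbital integrals agree on corresponding classes, for a fixed compatible normalization of Haar measures on the transferred tori, and those of $f$ vanish off this subset. Two local facts must be granted, and this is where the genuine content of \cite{JL} and its successors lies: (i) transfer, i.e.\ every $f'$ has an associated $f$ and conversely; and (ii) every square-integrable representation of either group admits a pseudocoefficient, a test function whose trace separates it from all other tempered representations.

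Next I would globalize. Pick a number field $k$ with a place $v_0$ at which $k_{v_0}\cong F$, and a central division $k$-algebra $\mathcal D$ of degree $d$ isomorphic to $D$ at $v_0$ and split outside $v_0$ and one auxiliary place $v_1$ (such $\mathcal D$ exists by class field theory), so that $\GL_m(\mathcal D)$ is a global inner form of $\GL_n$. Starting from a square-integrable $\pi$ of $\GL_n(F)$, I would assemble a decomposable test function $f=\bigotimes_v f_v$ on $\GL_n(\mathbb A_k)$ with $f_{v_0}$ a pseudocoefficient of $\pi$, with $f_{v_1}$ an Euler--Poincar\'e (or supercuspidal matrix-coefficient) function forcing every contributing automorphic representation to be cuspidal, and $f_v$ spherical elsewhere, and then choose $f'=\bigotimes_v f'_v$ on $\GL_m(\mathcal D)(\mathbb A_k)$ with each $f'_v$ associated to $f_v$. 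Applying the simple trace formula to both groups, the geometric sides, which are sums over elliptic regular rational classes of products of local orbital integrals, agree term by term because the classes correspond and the local functions are associated; hence the spectral sides coincide:
\[
\sum_{\Pi}\mathrm{tr}\,\Pi(f)=\sum_{\Pi'}\mathrm{tr}\,\Pi'(f'),
\]
the sums running over the relevant discrete automorphic representations of the two groups.

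Finally I would separate the places. Using linear independence of characters in the spherical functions $f_v=f'_v$ for $v\notin\{v_0,v_1\}$, strong multiplicity one for $\GL_n$, and the fact that $f_{v_0}$ is a pseudocoefficient, the left-hand side collapses to a single term attached to a cuspidal $\Pi$ with $\Pi_{v_0}\cong\pi$; matching then forces the right-hand side to collapse to a single $\Pi'$ with $\Pi'_v\cong\Pi_v$ for almost all $v$, and I would set $\pi_D:=\Pi'_{v_0}$ (independent of the global choices, as the output identity below shows). Varying $f_{v_0}$ over arbitrary test functions with $f'_{v_0}$ associated, while pinning down $\Pi$ through the remaining places (the auxiliary ramified place $v_1$ being handled by induction on the number of ramified places of $\mathcal D$), yields $\mathrm{tr}\,\pi_D(f'_{v_0})=e(\GL_m(D))\,\mathrm{tr}\,\pi(f_{v_0})$ for every associated pair, where $e(\GL_m(D))=(-1)^{n-m}$ is the Kottwitz sign, accounting for the difference of $F$-split ranks (equivalently, for the measure normalization on the transferred tori). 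By the Weyl integration formula and the density of orbital integrals among invariant distributions, this equality of traces is precisely the pointwise identity $(-1)^m Ch_{\pi_D}(g')=(-1)^n Ch_\pi(g)$ on corresponding regular elements. Injectivity of $\pi\mapsto\pi_D$ then follows from linear independence of characters on the common set of regular classes, and bijectivity by running the construction in the reverse direction. I expect the genuinely hard part to be the two local inputs (i) and (ii) --- the transfer of orbital integrals to an inner form, and the existence of pseudocoefficients of discrete series --- together with the bookkeeping needed to dispose of the auxiliary ramified place $v_1$; granted these, the trace-formula comparison and the extraction of the character identity are essentially formal.
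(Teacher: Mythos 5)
Note first that the paper does not prove Proposition~\ref{jl} at all: it is quoted as a known theorem, with the reference \cite{JL} (Deligne--Kazhdan--Vign\'eras), and the paper only \emph{uses} the character identity as an input for Proposition~\ref{j-l-formula}. So there is no internal proof to compare against; what you have written is, in outline, precisely the argument of the cited literature (Jacquet--Langlands for $d=2$, $m=1$, and \cite{JL}/Rogawski/Badulescu in general): transfer of regular semisimple orbital integrals between $\GL_n(F)$ and its inner form $\GL_m(D)$, pseudocoefficients of square-integrable representations, a globalization via a division algebra ramified at $\{v_0,v_1\}$, comparison of simple trace formulas, and separation of places by strong multiplicity one, with the Kottwitz sign $(-1)^{n-m}$ producing the stated signs. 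As a reconstruction of that proof your sketch is sound, but be aware of where the real weight sits beyond the two inputs you flag: (a) the existence of a cuspidal automorphic representation with prescribed square-integrable component at $v_0$ (and a suitable component at $v_1$) is itself a trace-formula argument, not a given; (b) ``running the construction in the reverse direction'' to get bijectivity, and even the claim that $\pi_D$ is square-integrable and independent of the global choices, requires the orthogonality relations for characters on the elliptic set (Kazhdan density), not just linear independence; and (c) the term-by-term matching of geometric sides needs the vanishing of orbital integrals of $f_{v_0}$ off the classes transferring to $\GL_m(D)$, which constrains which $f_{v_0}$ you may vary over and is exactly why the resulting character identity is only asserted on corresponding regular classes. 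None of this is carried out in the paper under review, nor does it need to be: for the purposes of Section~\ref{section-functoriality} the statement is an external input, and your proposal should be read as a summary of \cite{JL} rather than as an alternative to anything in the text.
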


Now, we have:
\begin{prop}\label{j-l-formula}
Let $\pi\in E^2(\GL_n(F))$ be an irreducible square-integrable representation, corresponding to $\pi_D\in E^2(\GL_m(D))$ under the Jacquet-Langlands correspondence. Then
\[
(-1)^mc_\cO(\pi)=(-1)^nc_{\cO'}(\pi_D),
\]
where $\cO'\subseteq\cO$ are nilpotent orbits of $G'$ and $G$, respectively (via identifying the Lie algebras of $G'$ and $G$, tensored with $\overline F$).
\end{prop}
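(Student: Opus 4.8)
The plan is to combine the Jacquet–Langlands character identity of Proposition~\ref{jl} with the local character expansion of Proposition~\ref{howe-local-character}, using the identification of Lie algebras $\fg' \otimes_F \overline F \cong \fg \otimes_F \overline F$ (both being $M_n$) to match up nilpotent orbits. First I would fix compatible exponential maps $\exp_{G'}\colon \fg'_0 \to G'_1$ and $\exp_G\colon \fg_0 \to G_1$ on sufficiently small neighborhoods, and choose a regular semisimple $X' \in \fg'_0$ whose $G'$-conjugacy class corresponds to a regular semisimple $X \in \fg_0$ of $G$ (this is possible since regular semisimple conjugacy classes in $\fg'$ embed into those of $\fg$ via the norm map / characteristic polynomial, matching the correspondence of regular elements used in Proposition~\ref{jl}). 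Then $g' = \exp_{G'}(X')$ and $g = \exp_G(X)$ are regular elements with corresponding conjugacy classes, so Proposition~\ref{jl} gives $(-1)^m Ch_{\pi_D}(\exp_{G'} X') = (-1)^n Ch_\pi(\exp_G X)$.

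Next, I would apply the local character expansion to both sides: for $X'$ in a small enough neighborhood of $0$ in $\fg'$,
\[
Ch_{\pi_D}(\exp_{G'} X') = \sum_{\cO'} c_{\cO'}(\pi_D)\,\widehat\mu_{\cO'}(X'),
\]
and similarly $Ch_\pi(\exp_G X) = \sum_{\cO} c_\cO(\pi)\,\widehat\mu_\cO(X)$. The key geometric input is that under the identification of $\fg'\otimes\overline F$ with $\fg\otimes\overline F$, a nilpotent orbit $\cO'$ of $G'$ corresponds to the partition of $n$ obtained by multiplying each part of the $\GL_m$-partition by $d$; this is exactly the $\cO' \subseteq \cO$ relation stated in the proposition, via the parabolic-subgroup description in Example~\ref{gln-nilp}. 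I would then invoke Lemma~\ref{induced-char}: $\widehat\mu_{\cO'}(X') = Ch_{\Ind_{P'}^{G'}(1_{P'})}(\exp_{G'} X')$ and $\widehat\mu_\cO(X) = Ch_{\Ind_P^G(1_P)}(\exp_G X)$, where $P'$ and $P$ are the corresponding parabolics. The induced representations $\Ind_{P'}^{G'}(1_{P'})$ and $\Ind_P^G(1_P)$ are themselves (virtual) images of each other under Jacquet–Langlands-type character comparison — more precisely, their characters on corresponding regular elements differ by the sign $(-1)^{m}$ versus $(-1)^n$ in the same way — which forces $\widehat\mu_{\cO'}(X') = \widehat\mu_\cO(X)$ up to the appropriate sign, and hence matches the $\cO'\leftrightarrow\cO$ terms.

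Finally, I would extract the coefficients by a linear-independence argument. The functions $\{\widehat\mu_\cO\}_{\cO}$ (restricted to a suitable neighborhood of $0$, along regular elements) are linearly independent — this is the standard uniqueness statement underlying Howe–Harish-Chandra's expansion — and likewise for $\{\widehat\mu_{\cO'}\}_{\cO'}$ on the $G'$ side; since the orbit-to-partition dictionaries on the two sides are compatible under the Lie algebra identification, comparing coefficient-by-coefficient in the identity $(-1)^m\sum_{\cO'} c_{\cO'}(\pi_D)\widehat\mu_{\cO'} = (-1)^n \sum_{\cO} c_\cO(\pi)\widehat\mu_\cO$ (after pulling back along the inclusion $\fg'\hookrightarrow\fg$ of regular loci) yields $(-1)^m c_{\cO'}(\pi_D) = (-1)^n c_\cO(\pi)$ for corresponding $\cO' \subseteq \cO$, and also shows $c_\cO(\pi) = 0$ when $\cO$ is not of the form $\cO' \cdot d$. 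The main obstacle I expect is making precise the pullback/restriction step: one must check that evaluating $\widehat\mu_\cO$ along the image of the regular locus of $\fg'$ inside $\fg$ genuinely recovers $\widehat\mu_{\cO'}$ (and not some combination of several $\widehat\mu_{\cO}$'s), which amounts to a density/descent argument for the orbital-integral distributions — this is where one genuinely uses that Jacquet–Langlands is compatible with parabolic induction, i.e., that the character relation for the $\Ind_P^G(1_P)$ is the "same" relation. This is essentially Howe's argument in \cite{howe} transported to the division-algebra setting, and I would cite it accordingly rather than reprove it.
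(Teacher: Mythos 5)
Your overall plan (evaluate the Harish-Chandra--Howe expansions of $Ch_\pi$ and $Ch_{\pi_D}$ at corresponding regular elements near the identity, convert $\widehat\mu_\cO$ into induced characters via Lemma~\ref{induced-char}, and compare coefficients) is the same skeleton as the paper's proof, but the step you flag as the ``main obstacle'' is exactly where your argument breaks, and the way you propose to resolve it is wrong. You assert that the functions $\widehat\mu_\cO$ remain linearly independent after restriction to the locus of elements transferred from $G'=\GL_m(D)$, and you conclude from this that $c_\cO(\pi)=0$ whenever $\cO\notin\cN_G^d$. That conclusion is false: for instance, any generic square-integrable $\pi$ (Steinberg, or any supercuspidal) has $c_{\cO_{\mathrm{reg}}}(\pi)\ne0$ (this is what produces the leading term $[n!]_qX^{n(n-1)/2}$ in Proposition~\ref{cusp-asymp}), while for $d>1$ the regular orbit does not lie in $\cN_G^d$. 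The correct mechanism, which the paper uses, is the opposite of linear independence on this locus: by van Dijk's theorem \cite[Thm~3]{dijk}, $Ch_{\Ind_P^G(1_P)}$ is supported (near $1$) on the union of conjugates of $P$, and an element of $\GL_n(F)$ that is block-diagonal with elliptic blocks of sizes $dm_1,\dots,dm_r$ lies in no conjugate of $P_\mu$ unless all parts of $\mu$ are multiples of $d$. Hence all terms with $\cO\notin\cN_G^d$ \emph{vanish identically} on the transferred elements and simply drop out of the identity; no information about their coefficients is obtained (nor needed). One then needs linear independence only of the surviving family $\{Ch_{\Ind_P^G(1)}\}$ with $P$ of ``$d$-divisible'' type, restricted to these elements, which the paper gets from the strict inclusions of supports $\supp(Ch_{\Ind_{P_1}^G(1)})\subsetneq\supp(Ch_{\Ind_{P_2}^G(1)})$ for $P_1\subsetneq P_2$; this also forces you to test against elements of \emph{all} elliptic types $m=m_1+\cdots+m_r$, not a single regular semisimple $X'$.

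A second, smaller problem is your sign bookkeeping in matching $\widehat\mu_{\cO'}$ with $\widehat\mu_\cO$. The representations $\Ind_{P'}^{G'}(1_{P'})$ and $\Ind_P^G(1_P)$ are not square-integrable, so they are not related by the Jacquet--Langlands correspondence, and there is no $(-1)^m$ versus $(-1)^n$ discrepancy in their character comparison: the identity used in the paper is the plain equality $\widehat\mu_{\cO'}(g')=Ch_{\Ind_{P'}^{G'}(1_{P'})}(g')=Ch_{\Ind_P^G(1_P)}(g)$ at corresponding regular elements. If you insert the Jacquet--Langlands signs into this step as you propose, they cancel against the signs in Proposition~\ref{jl} and you end up proving $c_{\cO'}(\pi_D)=c_\cO(\pi)$ without the factor $(-1)^{n-m}$, contradicting the statement you are trying to prove. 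So as written the proposal has a genuine gap in the coefficient-extraction step and an error in the sign analysis; both are repaired by the support argument via \cite{dijk} rather than by a pullback/linear-independence claim.
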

\begin{proof}
Let $m=m_1+\dots+m_r$ be an arbitrary partition of $m$, and consider a block diagonal matrix $g=(g_1,\dots,g_r)\in\GL_n(F)$ where $g_i\in\GL_{dm_i}(F)$ are elliptic elements (i.e., has an irreducible characteristic polynomial) and the minimal polynomials of $g_i$ are coprime. Then $g$ is regular semisimple, corresponding to the diagonal matrix $g'=(g_1',\dots,g_r')\in\GL_m(D)$ where each $g_i'\in D_{dm_i}^\times\subset\GL_{m_i}(D)$ is similar to $g_i\in\GL_{dm_i}(F)$, where $D_{dm_i}\supset D$ is a division algebra of dimension $d^2m_i^2$ over $F$.

Then, by Proposition~\ref{howe-local-character} and Lemma~\ref{induced-char}, when $g$ as above is sufficiently close to $1$,
\[
Ch_\pi(g)=\sum_{\cO}c_\cO(\pi)Ch_{\Ind_P^G(1)}(g).
\]
Here by \cite[Thm~3]{dijk} the character $Ch_{\Ind_P^G(1)}(g)$ is locally supported on the union of the conjugates of $P$, and so $Ch_{\Ind_P^G(1)}(g)=0$ for $P$ not corresponding to a partition of the form $a_1d+\cdots+a_id$.

Let $\cN_G^d$ be the set of such nilpotent orbits, i.e., those corresponding to partitions with all parts a multiple of $d$. There is a bijection $\cN_{G'}\simeq\cN_G^d$, sending $\cO'\in\cN_{G'}$ to the unique nilpotent orbit $\cO$ of $G$ such that $\cO'\otimes\overline F\subseteq\cO\otimes\overline F$ (on the level of partitions, $a_1+\cdots+a_i\mapsto a_1d+\cdots+a_id$).

Now, Proposition~\ref{jl} gives the equality
\begin{equation}\label{jl-eq}
(-1)^m\sum_{\cO\in\mathcal N_G^d}c_\cO(\pi)Ch_{\Ind_P^G(1)}(g)=(-1)^n\sum_{\cO'\in\mathcal N_{G'}}c_{\cO'}(\pi_D)Ch_{\Ind_P^G(1)}(g),
\end{equation}
since by Lemma~\ref{induced-char},
\[
\widehat\mu_{\cO'}(g')=Ch_{\Ind_{P'}^{G'}(1_{P'})}(g')=Ch_{\Ind_P^G(1_P)}(g).
\]
where $P'\colonequals (P\otimes\overline F)\cap G'\subset G'$. Moreover, for parabolic subgroups $P_1\subsetneq P_2$ of $G$ we have $\supp(Ch_{\Ind_{P_1}^G(1)})\subsetneq\supp(Ch_{\Ind_{P_2}^G(1)})$ in a neighborhood of $1\in G$, so the characters $\{Ch_{\Ind_P^G(1)}:P\subset G\text{ parabolic}\}$ are linearly independent. Thus, equation~\ref{jl-eq} gives \[(-1)^mc_\cO(\pi)=(-1)^nc_{\cO'}(\pi_D).\qedhere\]
\end{proof}

\begin{cor}\label{const-term}
Let $\pi\in E^2(\GL_n(F))$ be an irreducible square-integrable representation, and let $\pi_D\in E^2(D^\times)$ be the corresponding representation under the Jacquet-Langlands correspondence, where $D$ is a $n^2$-dimensional division algebra. Then the growth polynomial $G_\pi(X)$ has constant term $(-1)^{n-1}\dim(\pi_D)$.
\end{cor}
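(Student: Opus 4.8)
\emph{Proof proposal.} The plan is to read the constant term of $G_\pi(X)$ off the local character expansion and then transport it across the Jacquet--Langlands correspondence using Proposition~\ref{j-l-formula}. First, by Corollary~\ref{fixed-poly},
\[
G_\pi(X)=\sum_{\cO\in\cN_G}c_\cO(\pi)\begin{bmatrix}n\\n_1,\dots,n_r\end{bmatrix}_qX^{\frac12\dim\cO},
\]
and the unique nilpotent orbit with $\dim\cO=0$ is the zero orbit $\{0\}$, which under the bijection of Example~\ref{gln-nilp} corresponds to the one-part partition $n=n$ and to the parabolic $P=G$, so its coefficient $\begin{bmatrix}n\\n\end{bmatrix}_q$ equals $1$. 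Hence the constant term of $G_\pi$ is exactly $c_{\{0\}}(\pi)$, and the task reduces to computing this number.

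Next I would specialize Proposition~\ref{j-l-formula} to the case at hand, where $D$ has dimension $n^2$, so $d=n$, $m=1$, and $G'=\GL_1(D)=D^\times$. Since $D$ is a division algebra, its Lie algebra contains no nonzero nilpotent element, so $\cN_{G'}=\{\{0\}\}$; and the bijection $\cN_{G'}\simeq\cN_G^d$ used in the proof of Proposition~\ref{j-l-formula} sends this single orbit to the $\GL_n(F)$-orbit attached to the partition $(1\cdot d)=(n)$, i.e.\ again to the zero orbit. Therefore Proposition~\ref{j-l-formula} reads $(-1)^1c_{\{0\}}(\pi)=(-1)^nc_{\{0\}}(\pi_D)$, that is, $c_{\{0\}}(\pi)=(-1)^{n-1}c_{\{0\}}(\pi_D)$.

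It then remains to identify $c_{\{0\}}(\pi_D)$ with $\dim(\pi_D)$. Here I would use that $D^\times$ is compact modulo its center, so $\pi_D$ is finite-dimensional and its distribution character is represented by the locally constant function $g\mapsto\mathrm{tr}\,\pi_D(g)$; in particular $Ch_{\pi_D}(\exp X)=Ch_{\pi_D}(1)=\dim(\pi_D)$ for $X$ in a small enough neighborhood of $0$. On the other hand, the local character expansion of $\pi_D$ (Proposition~\ref{howe-local-character}) has only one term, namely $c_{\{0\}}(\pi_D)\,\widehat\mu_{\{0\}}(X)=c_{\{0\}}(\pi_D)$, since $\widehat\mu_{\{0\}}=1$. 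Comparing the two expressions at a regular $X$ close to $0$ gives $c_{\{0\}}(\pi_D)=\dim(\pi_D)$, and combined with the previous step this yields constant term $(-1)^{n-1}\dim(\pi_D)$, as claimed.

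The argument is essentially bookkeeping once Proposition~\ref{j-l-formula} is available; the only point that needs care is the orbit matching in this degenerate situation---checking that the (unique) zero orbit of $D^\times$ corresponds precisely to the zero orbit of $\GL_n(F)$, and that no other orbit of $\GL_n(F)$ contributes to the constant term. Both are immediate from the dimension formula $\dim\cO_\lambda=n^2-\sum_in_i^2$ together with the anisotropy of $D^\times$, so I do not anticipate a genuine obstacle beyond unwinding the definitions.
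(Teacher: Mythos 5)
Your proposal is correct and follows essentially the same route as the paper: read the constant term of $G_\pi$ as $c_{\{0\}}(\pi)$ via Corollary~\ref{fixed-poly}, apply Proposition~\ref{j-l-formula} with $m=1$, $d=n$ to get $c_{\{0\}}(\pi)=(-1)^{n-1}c_{\{0\}}(\pi_D)$, and use compactness of $D^\times$ modulo center to identify $c_{\{0\}}(\pi_D)=\dim(\pi_D)$. Your write-up merely spells out the orbit-matching and the identification $c_{\{0\}}(\pi_D)=\dim(\pi_D)$ in more detail than the paper does, which is fine.
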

\begin{proof}
Since $D^\times$ is compact modulo center, the representation $\pi_D$ is finite-dimensional. Thus, $c_0(\pi_D)=\dim(\pi_D)$, and so $c_0(\pi)=(-1)^{n-1}c_0(\pi_D)=(-1)^{n-1}\dim(\pi_D)$.
\end{proof}

The combination of Proposition~\ref{cusp-asymp} and Corollary~\ref{const-term} allows the calculation for the polynomial $\dim(\pi^{K_N})$ for representations $\pi$ of $\GL_2(F)$:

\begin{example}\label{gl2-dim}
When $n=2$, \cite[Section~56]{Bush-Henn} provides an explicit description of the Jacquet-Langlands correspondence for minimal supercuspidal representations $\rho$ of $\GL_2$. Only treating these suffices since arbitrary supercuspidal representations are twists of minimal supercuspidal representations.
\begin{itemize}
    \item If $\rho$ has level zero, then $\rho_D$ is induced from a character of $E^\times U_D^1$, so
    \[
    G_\rho(X)=(q+1)X-2.
    \]
\end{itemize}
Otherwise, there is a simple stratum $(\fA, n,\alpha)$ such that $\rho$ contains the character $\psi_\alpha$ of $U_\fA^{[n/2]+1}$, so $\rho\cong\cInd_J^G\Lambda$ where $J=E^\times U_\fA^{[(n+1)/2]}$ and $\Lambda$ is an irreducible representation on $J$:
\begin{itemize}
    \item If $e_\fA=2$ then $\rho_D$ is induced from a character on $E^\times U_D^{[n/2]+1}$, so
    \[
    G_\rho(X)=(q+1)X-|E^\times U_D^{[n/2]+1}\backslash D^\times|.
    \]
    \item If $e_\fA=1$ and $n$ is odd then $\rho_D$ is induced from a $q$-dimensional representation of $E^\times U_D^n$, so
    \[
    G_\rho(X)=(q+1)X-q|E^\times U_D^n\backslash D^\times|,
    \]
    \item If $e_\fA=1$ and $n$ is even then $\rho_D$ is induced from a character of $E^\times U_D^{n+1}$, so
    \[
    G_\rho(X)=(q+1)X-|E^\times U_D^{n+1}\backslash D^\times|.
    \]
\end{itemize}
In each of these cases, we have:
\begin{align*}
|E^\times U_D^m\backslash D^\times|&=|\langle\varpi_E\rangle\backslash\langle\varpi_D\rangle|\cdot|k_E^\times\backslash k_D^\times|\cdot|U_E^1U_D^m\backslash U_D^1|\\
&=\frac2e\cdot \frac{q^2-1}{q^{2/e}-1}|U_E^1U_D^m\backslash U_D^1|.
\end{align*}
Now, there is an exact sequence
\[
1\to (U_E^1\cap U_D^m)\backslash U_E^1\cong U_D^m\backslash U_E^1U_D^m\to U_D^m\backslash U_D^1\to U_E^1U_D^m\backslash U_D^1\to 1,
\]
where $m\ge1$. Noting that $U_E^1\cap U_D^m=U_E^{1+[(m-1)e/2]}$, we obtain
\[
|U_E^1U_D^m\backslash U_D^1|=\frac{|U_D^m\backslash U_D^1|}{|U_E^{1+[(m-1)e/2]}\backslash U_E^1|}=q^{2(m-1)-\frac2e[\frac e2(m-1)]}.\]
Thus, combining everything gives
\[
G_\rho(X)=\begin{cases}
(q+1)X-2&\text{if }\rho\text{ has level zero}\\
(q+1)X-(q+1)q^{[n/2]}&\text{if }e_\fA=2\\
(q+1)X-2q^n&\text{if }e_\fA=1.
\end{cases}
\]
\end{example}

\subsection{Base change} Another instance of Langlands functoriality is base change and automorphic induction. For a finite extension $E/F$ of degree $d$, base change is given by the commutative diagram
\begin{equation}\label{eq-base-change} \begin{tikzcd}
\mathbf{Irr}(\GL_n(F)) \arrow{r}{\mathrm{rec}_F} \arrow[swap,dashed]{d}{\mathrm{BC}_{E/F}} & \mathcal G_n(F)  \arrow{d}{\mathrm{Res}^{\mathcal W_F}_{\mathcal W_E}} \\%
\mathbf{Irr}(\GL_n(E)) \arrow{r}{\mathrm{rec}_E}& \mathcal G_n(E),
\end{tikzcd}\end{equation}
and automorphic induction is given by
\[ \begin{tikzcd}
\mathbf{Irr}(\GL_n(E)) \arrow{r}{\mathrm{rec}_E} \arrow[swap,dashed]{d}{\mathrm{AI}_{E/F}} & \mathcal G_n(E)  \arrow{d}{\mathrm{Ind}^{\mathcal W_F}_{\mathcal W_E}} \\%
\mathbf{Irr}(\GL_{nd}(F)) \arrow{r}{\mathrm{rec}_F}& \mathcal G_{nd}(F),
\end{tikzcd}\]
where $\mathrm{rec}_F$ denotes the local Langlands correspondence. When $E/F$ is a cyclic extension of prime degree, $\mathrm{BC}_{E/F}$ has a characterization in terms of characters \cite[Thm~6.2]{AC}:
\begin{defn}
Let $E/F$ be a cyclic extension of prime degree, and let $\tau\in\Gal(E/F)$ be a generator.
Moreover, let $(\Pi,V)$ be an irreducible representation of $\GL_n(E)$ such that $\Pi\cong\Pi^\tau$, where $\Pi^\tau(g)\colonequals\Pi(\tau(g))$ for $g\in\GL_n(E)$. Let $I_\tau\colon\Pi\to\Pi^\tau$ be such an isomorphism, appropriately normalized (require the Whittaker functional to be fixed). Then there exists a locally constant function $Ch_{\Pi,\tau}$ defined on $\GL_n(E)^{\tau-reg}$ such that
\[
\mathrm{tr}(\pi(f)\circ I_\tau)=\int_{\GL_n(E)^{\tau-reg}}f(g)Ch_{\Pi,\tau}(g)dg,
\]
where $\GL_n(E)^{\tau-reg}$ is the set of $g\in\GL_n(E)$ such that $g\tau(g)\cdots\tau^{d-1}(g)$ is regular.
\end{defn}

\begin{prop}[Shintani character formula]\label{shintani-char}
Let $E/F$ be a cyclic extension of prime degree $d$, and let $\tau\in\Gal(E/F)$ be a generator. Then for $\pi$ an irreducible tempered representation of $\GL_n(F)$,
\[
Ch_{\mathrm{BC}_{E/F}(\pi),\tau}(g)=Ch_\pi(\mathcal Ng),
\]
for any $g\in\GL_n(E)^{\tau-reg}$, where $\mathcal Ng\in\GL_n(F)$ is the norm of $g$, i.e., the unique (up to conjugacy) element of $\GL_n(F)$ conjugate to $g\tau(g)\cdots\tau^{d-1}(g)$ (which is regular by definition).
\end{prop}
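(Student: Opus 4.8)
The identity is a local statement, but in~\eqref{eq-base-change} the map $\mathrm{BC}_{E/F}$ is defined through the local Langlands correspondence and restriction of Weil--Deligne representations, so the plan is first to identify this lift with the base change lift constructed by Arthur--Clozel via a comparison of trace formulas, and then to read off the character identity from that comparison. Concretely, one works with the \emph{twisted} trace formula for $\GL_n$ over $E$: the relevant object is $\GL_n(\mathbb A_E)$ equipped with the automorphism induced by $\tau$ acting entrywise, and its spectral side is a sum over $\tau$-stable discrete automorphic representations $\Pi$ of the twisted traces $\mathrm{tr}(\Pi(f)\circ I_\tau)$, where $I_\tau\colon\Pi\to\Pi^\tau$ is the intertwiner normalized by fixing a Whittaker functional, exactly as in the statement. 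In parallel one has the ordinary trace formula for $\GL_n(\mathbb A_F)$, whose spectral side is a sum $\sum_\pi\mathrm{tr}(\pi(f'))$.

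The core input is the \textbf{matching of test functions}: a norm map sending $\tau$-conjugacy classes in $\GL_n(E_v)$ to conjugacy classes in $\GL_n(F_v)$ (a $\tau$-regular $g$ going to $\mathcal N g$), together with a transfer/fundamental-lemma statement that for $f_v\in C_c^\infty(\GL_n(E_v))$ produces $f'_v\in C_c^\infty(\GL_n(F_v))$ with equal (twisted) orbital integrals along matching classes, the Hecke units corresponding. Granting this, if $f=\prod_v f_v$ and $f'=\prod_v f'_v$ are associated at every place, then the geometric sides of the two trace formulas agree, hence so do the spectral sides. Globalizing the given tempered $\pi$ at our fixed place --- realize it as a local component of a cuspidal automorphic representation by a simple trace formula argument, pinned at an auxiliary place by a pseudo-coefficient --- and invoking strong multiplicity one for $\GL_n$ together with linear independence of twisted characters, one isolates a single $\tau$-stable $\Pi$ on the twisted side; this is $\mathrm{BC}_{E/F}(\pi)$, and the comparison yields the local identity $\mathrm{tr}(\mathrm{BC}_{E/F}(\pi)(f_v)\circ I_\tau)=\mathrm{tr}(\pi(f'_v))$ for every associated pair at the fixed place.

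Finally one localizes this distributional identity. Expanding the left side by the twisted Weyl integration formula over $\GL_n(E_v)$ against $Ch_{\mathrm{BC}_{E/F}(\pi),\tau}$, and the right side by the ordinary Weyl integration formula against $Ch_\pi$, and taking $f_v$ supported near an arbitrary $\tau$-regular $g$ with $f'_v$ chosen so that its orbital integrals near $\mathcal N g$ match the twisted orbital integrals of $f_v$ near $g$, the two expansions force
\[
Ch_{\mathrm{BC}_{E/F}(\pi),\tau}(g)=Ch_\pi(\mathcal N g).
\]

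The main obstacle is precisely the transfer/fundamental lemma underlying the matching of test functions, which for base change is the content of \cite{AC}, together with the sharp normalization of $I_\tau$: without fixing it by a Whittaker functional the identity holds only up to a root of unity, and it is the verification that this constant equals $1$ --- carried out on the Whittaker model, where Shintani's original $\GL_2$ computation lives --- that upgrades the formula from a proportionality to an equality. The temperedness hypothesis keeps the comparison inside the part of the spectrum that globalizes cleanly and where the continuous-spectrum contributions to the two trace formulas are matched term by term; removing it requires the full inductive treatment of the parabolically induced constituents.
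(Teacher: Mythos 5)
The paper does not prove this proposition at all: it is quoted as a known result, with the attribution \cite[Thm~6.2]{AC} given in the sentence introducing the twisted character $Ch_{\Pi,\tau}$, so there is no internal argument to compare yours against. Your sketch is, in outline, exactly the Arthur--Clozel proof that the citation points to: comparison of the twisted trace formula for $\GL_n/E$ with the ordinary trace formula for $\GL_n/F$, matching of (twisted) orbital integrals along the norm map together with the base change fundamental lemma, globalization of the local tempered $\pi$, isolation of a single $\tau$-stable $\Pi$ by strong multiplicity one and linear independence of twisted characters, localization via the (twisted) Weyl integration formula, and the Whittaker normalization of $I_\tau$ to pin down the constant. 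At that level of detail the plan is sound, and your closing remarks correctly identify the two delicate points (transfer and the normalization of $I_\tau$).

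One substantive gap to flag: the paper \emph{defines} $\mathrm{BC}_{E/F}$ through the diagram~\eqref{eq-base-change}, i.e.\ via $\mathrm{rec}_F$, restriction to $\mathcal W_E$, and $\mathrm{rec}_E^{-1}$, whereas your argument establishes the character identity for the lift \emph{constructed} by the trace formula comparison. You announce in the first sentence that these two lifts will be identified, but the body of the proposal never supplies that identification; it is not formal, since it amounts to the compatibility of the local Langlands correspondence with cyclic base change (known, but an extra input beyond the trace formula comparison itself --- it is built into the characterization/construction of $\mathrm{rec}_F$). Also, your globalization step as stated (pseudo-coefficient at an auxiliary place) really handles discrete series; for general tempered $\pi$ one reduces to that case by writing $\pi$ as irreducibly induced from discrete series and using that base change commutes with parabolic induction together with the induced (twisted) character formula --- your last sentence gestures at this, but it belongs inside the argument rather than as a caveat. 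Neither point affects the verdict that your route is the standard one behind the cited theorem.
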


Shintani's character formula gives another method to calculate the growth polynomial:
\begin{prop}\label{twisted-char}
Let $E/F$ be a cyclic extension of prime degree $d$, and let $\theta$ be a generic character of $E/F$ (i.e., $\theta\ne\theta^\tau$). Then for an integer $N\gg0$,
\[
G_{\mathrm{AI}_{E/F}\theta}(q^{[N/e]-1})=\mathrm{tr}(I_\tau|_{V^{K_N}})
\]
where $V\colonequals\theta\times\theta^\tau\times \cdots\times\theta^{\tau^{d-1}}$.
\end{prop}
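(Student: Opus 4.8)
The plan is to connect the twisted trace of the intertwining operator $I_\tau$ on the space of $K_N$-fixed vectors to the growth polynomial of the automorphically induced representation $\mathrm{AI}_{E/F}\theta$, by running the argument of Corollary~\ref{fixed-poly} in the twisted setting. First I would recall that $\mathrm{AI}_{E/F}\theta = \mathrm{BC}_{E/F}^{-1}$-type data: in the Grothendieck ring, $V = \theta\times\theta^\tau\times\cdots\times\theta^{\tau^{d-1}}$, viewed as a representation of $\GL_n(E)$, is $\tau$-stable, and the operator $I_\tau$ permuting the factors realizes the isomorphism $V\cong V^\tau$; its twisted character is computed on $\tau$-regular elements by the Shintani formula (Proposition~\ref{shintani-char}) applied factor by factor, giving $Ch_{V,\tau}(g) = Ch_{\mathrm{AI}_{E/F}\theta}(\mathcal N g)$ for $g$ in a suitable neighborhood. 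The key point is that $\mathrm{AI}_{E/F}\theta$ lives on $\GL_{nd}(F)$ and we want \emph{its} growth polynomial, so I need to relate $\mathcal N$ to the exponential map and the local character expansion on $\GL_{nd}(F)$.

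Next I would follow the template of Corollary~\ref{fixed-poly}: compute $\mathrm{tr}(I_\tau|_{V^{K_N}})$ as a twisted trace $\mathrm{tr}(V(e_{K_N})\circ I_\tau)$, since $V(e_{K_N})$ is the projector onto $V^{K_N}$ and it commutes appropriately with $I_\tau$ (because $K_N$ is $\tau$-stable and $e_{K_N}$ is $\tau$-invariant). Then by the twisted-character integral formula in the definition preceding Proposition~\ref{shintani-char},
\[
\mathrm{tr}(I_\tau|_{V^{K_N}}) = \frac{1}{\mathrm{vol}(K_N)}\int_{K_N^{\tau-reg}} Ch_{V,\tau}(g)\,dg = \frac{1}{\mathrm{vol}(K_N)}\int_{K_N^{\tau-reg}} Ch_{\mathrm{AI}_{E/F}\theta}(\mathcal N g)\,dg.
\]
Now I would change variables via $g = \exp_E(X)$ for $X$ in a small neighborhood of $0$ in $\mathfrak{gl}_n(E)$; the norm map $\mathcal N g = g\,\tau(g)\cdots\tau^{d-1}(g)$ corresponds, on the Lie algebra level near $0$, to $X + \tau(X) + \cdots + \tau^{d-1}(X)$ composed with the identification $\mathfrak{gl}_n(E)\hookrightarrow\mathfrak{gl}_{nd}(F)$, i.e.\ to the trace map $\mathrm{tr}_{E/F}$ on matrix entries. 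Plugging in Howe--Harish-Chandra's expansion (Proposition~\ref{howe-local-character}) for $Ch_{\mathrm{AI}_{E/F}\theta}$ and Lemma~\ref{induced-char}, each term becomes $c_{\mathcal O}(\mathrm{AI}_{E/F}\theta)$ times $\widehat\mu_{\mathcal O}$ pulled back along $\mathrm{tr}_{E/F}$, which after integrating over $K_N$ and matching ramification produces a monomial in $q^{[N/e]-1}$; summing gives exactly $G_{\mathrm{AI}_{E/F}\theta}(q^{[N/e]-1})$.

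The main obstacle I anticipate is the careful bookkeeping around the norm/exponential compatibility and the ramification factor $e = e(E/F)$: one must verify that for $g = \exp_E(X)$ with $X \in M_n(\mathfrak p_E^N)$, the element $\mathcal N g$ lies in the range of validity of the local character expansion on $\GL_{nd}(F)$ and that the volume of the relevant neighborhood scales as a power of $q^{[N/e]-1}$ rather than $q^{N-1}$ — this is where the $[N/e]$ in the statement comes from, via $\mathfrak p_E \cap F = \mathfrak p_F$ and $\mathrm{tr}_{E/F}(\mathfrak p_E^N) \subseteq \mathfrak p_F^{[N/e]}$ (up to the different, which only affects finitely many $N$ and hence not the eventual polynomial identity). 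A secondary subtlety is the normalization of $I_\tau$: one should check that fixing the Whittaker functional, as in the definition, is consistent with the factor-permutation operator on $\theta\times\cdots\times\theta^{\tau^{d-1}}$, which follows from the fact that the standard intertwining/permutation operators preserve the Whittaker model on the full induced representation and that $\mathrm{AI}_{E/F}\theta$ is the unique irreducible generic (indeed, the whole induced representation is irreducible when $\theta$ is generic, i.e.\ $\theta \ne \theta^\tau$) constituent.
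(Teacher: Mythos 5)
Your opening moves agree with the paper's: identify $V=\theta\times\theta^\tau\times\cdots\times\theta^{\tau^{d-1}}$ as $\mathrm{BC}_{E/F}(\mathrm{AI}_{E/F}\theta)$ via Mackey on the Galois side, write $\mathrm{tr}(I_\tau|_{V^{K_N}})=\frac1{\mathrm{vol}(K_N(E))}\int_{K_N(E)^{\tau-reg}}Ch_\pi(\mathcal Ng)\,dg$ using Proposition~\ref{shintani-char}, and expand $Ch_\pi$ by Proposition~\ref{howe-local-character} and Lemma~\ref{induced-char}. The gap is in how you evaluate the resulting orbit terms $\frac1{\mathrm{vol}(K_N(E))}\int_{K_N(E)^{\tau-reg}}\widehat\mu_\cO(\mathcal Ng)\,dg$: you propose to linearize the norm map ($\log\mathcal Ng\approx X+\tau X+\cdots+\tau^{d-1}X$, i.e.\ an entrywise $\mathrm{tr}_{E/F}$) and push the Haar measure forward. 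This is exactly where the content of the proposition sits, and your sketch does not carry it. First, the pushforward of normalized Haar measure on $M(\p_E^N)$ under entrywise $\mathrm{tr}_{E/F}$ is normalized Haar measure on $M(\mathrm{tr}_{E/F}(\p_E^N))$, and $\mathrm{tr}_{E/F}(\p_E^N)=\p_F^{\lfloor (N+d_{E/F})/e\rfloor}$ where $d_{E/F}$ is the valuation of the different; in ramified cases this level disagrees with $[N/e]$ for infinitely many $N$ (all $N$ when $E/F$ is wildly ramified), so your remark that the different ``only affects finitely many $N$'' is false, and the heuristic produces the wrong power of $q$ in each monomial. Second, even granting the level, $\mathcal Ng$ is only defined up to conjugacy and $\log\mathcal Ng$ differs from the entrywise trace by commutator corrections, so turning the linearization into an identity of integrals against $\widehat\mu_\cO$ amounts to a statement about norm maps and twisted orbital integrals that is not mere bookkeeping and is not supplied.

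The paper avoids the Lie-algebra computation altogether. After the character expansion it replaces $\Ind_P^G(1)$ by $\Ind_P^G(\chi_1\boxtimes\cdots\boxtimes\chi_r)$ with unitary $\chi_i$ such that the $\chi_i\circ N_{E/F}$ are distinct (same character germ near $1$), observes that its base change is $\Ind_{P(E)}^{G(E)}(\chi_1\circ N_{E/F}\boxtimes\cdots\boxtimes\chi_r\circ N_{E/F})$, and applies Shintani's formula a second time, in reverse, to convert each orbit term into $\mathrm{tr}\bigl(I_\tau|_{\Ind_{P(E)}^{G(E)}(\cdots)^{K_N}}\bigr)$. That twisted trace is then evaluated combinatorially: invariant vectors are functions on $P(E)\backslash G(E)/K_N(E)$, $\tau$ acts through the Galois action, and the trace is the number of fixed double cosets, $(P(E)\backslash G(E)/K_N(E))^\tau=P(F)\backslash G(F)/K_{[N/e]}(F)$, which via Lemma~\ref{parabolic-size} and Corollary~\ref{fixed-poly} assembles into $G_{\mathrm{AI}_{E/F}\theta}(q^{[N/e]-1})$. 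This fixed-point count is what actually produces the exponent $[N/e]$; to repair your argument you would need either to reproduce this reduction or to prove the norm-map measure statement directly, which your trace-map heuristic gets wrong in the ramified case.
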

\begin{proof}
Let $\pi=\mathrm{AI}_{E/F}(\theta)$ and $\Pi=\mathrm{BC}_{E/F}(\pi)$. Then, by Mackey's formula,
\begin{align*}
    \mathrm{rec}_E(\Pi)&=(\Ind_{\mathcal W_E}^{\mathcal W_F}\mathrm{rec}_E\theta)|_{\mathcal W_E}\\
    &=\bigoplus_{\sigma\in\mathcal W_E/\mathcal W_F}\mathrm{rec}_E\theta^\sigma\\
    &=\mathrm{rec}_E\theta\oplus\mathrm{rec}_E\theta^\tau\oplus \cdots\oplus\mathrm{rec}_E\theta^{\tau^{d-1}},
\end{align*}
so that $\Pi=\theta\times\cdots\times\theta^{\tau^{d-1}}$. Now, for $N\gg0$, by Proposition~\ref{shintani-char} and Lemma~\ref{induced-char},
\begin{align*}
    \mathrm{tr}(I_\tau|_{\Pi^{K_N}})&=\frac1{\mathrm{vol}(K_N(E))}\int_{K_N(E)^{\tau-reg}}Ch_{\Pi,\tau}(g)dg\\
    &=\frac1{\mathrm{vol}(K_N(E))}\int_{K_N(E)^{\tau-reg}}Ch_{\pi}(\mathcal Ng)dg\\
    &=\sum_\cO c_\cO(\pi)\frac1{\mathrm{vol}(K_N(E))}\int_{K_N(E)^{\tau-reg}}Ch_{\Ind_P^G(1)}(\mathcal Ng)dg,
\end{align*}
where the use of Lemma~\ref{induced-char} is justified because $\mathcal Ng\in\GL_n(F)$ is regular and lies in a small neighborhood of $1$.
We may safely replace $\Ind_P^G(1)$ with $\Ind_P^G(\chi_1\boxtimes\cdots\boxtimes\chi_r)$ for unitary characters $\chi_i$ such that $\chi_i\circ N_{E/F}$ are distinct, since the germs of their characters match. But then (by following the diagram~\eqref{eq-base-change}), the base change is \[\mathrm{BC}_{E/F}(\Ind_P^G(\chi_1\boxtimes\cdots\boxtimes\chi_r))=\Ind_{P(E)}^{G(E)}(\chi_1\circ N_{E/F}\boxtimes\cdots \boxtimes\chi_r\circ N_{E/F}),\] so by the same argument as above,
\begin{align*}
\frac1{\mathrm{vol}(K_N(E))}\int_{K_N(E)^{\tau-reg}}Ch_{\Ind_P^G(1)}(\mathcal Ng)dg&=\mathrm{tr}\big(I_\tau|_{\Ind_{P(E)}^{G(E)}(\chi_1\circ N_{E/F}\boxtimes\cdots\boxtimes \chi_r\circ N_{E/F})^{K_N}}\big).
\end{align*}
Vectors of the invariant space $\Ind_{P(E)}^{G(E)}(\chi_1\circ N_{E/F}\boxtimes\cdots \boxtimes\chi_r\circ N_{E/F})^{K_N}$ can be viewed as functions on $P(E)\backslash G(E)/K_N(E)$, and $\tau$ acts on it by the Galois action, so the trace is the size of the invariant elements
\[
(P(E)\backslash G(E)/K_N(E))^\tau=P(F)\backslash G(F)/(K_N(E)\cap G(F))=P(F)\backslash G(F)/K_{[N/e]}(F),
\]
so altogether, by Corollary~\ref{fixed-poly},
\begin{align*}
\mathrm{tr}(I_\tau|_{\Pi^{K_N}})&=\sum_\cO c_\cO(\pi)|P(F)\backslash G(F)/K_{[N/e]}(F)|\\
&=G_{\mathrm{AI}_{E/F}\theta}(q^{[N/e]-1}).\qedhere
\end{align*}
\end{proof}

We will give an alternative proof of the $n=2$ case of Proposition~\ref{murn}~(1), as well as Example~\ref{gl2-dim}:
\begin{prop}
Let $E/F$ be an unramified degree $2$ extension and let $\theta$ be a generic character of $E/F$, such that $\theta^\tau\theta^{-1}$ has level $\ell$ (i.e., $\ell$ is the minimal integer such that $\theta^\tau\theta^{-1}$ is trivial on $1+\p_E^{\ell+1}$). Then,
\[
G_{\mathrm{AI}_{E/F}(\theta)}(X)=(q+1)X-2q^\ell.
\]
\end{prop}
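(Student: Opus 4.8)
The plan is to apply Proposition~\ref{twisted-char} with $d=2$, which reduces the computation of $G_{\mathrm{AI}_{E/F}(\theta)}(q^{[N/e]-1})$ to computing $\mathrm{tr}(I_\tau|_{V^{K_N}})$, where $V=\theta\times\theta^\tau$ is an induced representation of $\GL_2(E)$ from the Borel. Since $E/F$ is unramified, $e=1$, so we are computing $G_{\mathrm{AI}_{E/F}(\theta)}(q^{N-1})$ directly. By Lemma~\ref{fixed} (or the flag-variety description used in Lemma~\ref{parabolic-size}), the space $V^{K_N}$ is identified with functions on the partial flag variety $B(E)\backslash\GL_2(E)/K_N(E) \cong \bP^1(\sO_E/\p_E^N)$, twisted by the character $\theta\boxtimes\theta^\tau$; and $\tau$ acts on this space combining the Galois action on $\bP^1(\sO_E/\p_E^N)$ with the swap $\theta\leftrightarrow\theta^\tau$ and the character values. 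The trace of $I_\tau$ is then a sum over $\tau$-fixed points of this twisted action, weighted by the ratio of character values at those fixed points.

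First I would set up coordinates: write points of $\bP^1(\sO_E/\p_E^N)$ in the two standard charts, and describe how $B(E)\backslash\GL_2(E)/K_N(E)$ decomposes, keeping careful track of which stabilizer each double coset representative has (this determines which local character factor $\theta$ or $\theta^\tau$ contributes). Then I would determine the $\tau$-fixed locus: the Galois involution on $\bP^1(\sO_E/\p_E^N)$ composed with the swap of the two induced factors. A point is fixed when its Galois conjugate equals its image under the Weyl-group swap, which picks out points lying over $\bP^1(\sO_F/\p_F^N)$ essentially, but with the swap built in one gets points parametrized by the "anti-diagonal" — concretely, $x\in\sO_E/\p_E^N$ with $\tau(x) = x^{-1}$ (in the appropriate chart) or the analogous condition. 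At each such fixed point, $I_\tau$ acts by the scalar $\theta^\tau(x)\theta(x)^{-1}$ or similar, and summing these scalars gives the trace.

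The key computation is therefore: count, with the character weights $\theta(\tau(x))\theta(x)^{-1}$, the solutions in $\sO_E/\p_E^N$ (and the point at infinity) of the norm-one-type equation coming from the twisted fixed-point condition; the character $\theta^\tau\theta^{-1}$ has level $\ell$, so its restriction to the relevant subgroup is trivial beyond level $\ell$ and the weighted sum collapses. One expects the "generic" fixed points (those where the relevant element is a unit) to contribute a geometric-type sum in $\theta^\tau\theta^{-1}$ that vanishes unless we are within level $\ell$, giving a main term proportional to $q^\ell$, while the finitely many degenerate fixed points (coming from the two charts / point at infinity, where the stabilizer is the full torus and $I_\tau$ acts trivially) contribute the $(q+1)X$ part after accounting for the $N$-dependence. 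Carefully bookkeeping signs and the two contributions should yield $(q+1)q^{N-1} - 2q^\ell$, i.e., $G_{\mathrm{AI}_{E/F}(\theta)}(X)=(q+1)X-2q^\ell$.

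The main obstacle I anticipate is the precise identification of the twisted $\tau$-action on $V^{K_N}$ together with the correct normalization of $I_\tau$ — in particular getting the weights $\theta\leftrightarrow\theta^\tau$ attached to the right double cosets, and making sure the Whittaker normalization of $I_\tau$ (required in the definition preceding Proposition~\ref{shintani-char}) is compatible with the naive permutation-of-coordinates action on the induced model, so that no extra sign or scalar is lost. Once that is pinned down, the fixed-point count and the level-$\ell$ collapse of the character sum are routine; cross-checking against Proposition~\ref{murn}~(1) with $n=2$ (where $j=\ell$, $\dim\cO\in\{0,2\}$, and the formula gives $G_\rho(X)=(q+1)X - 2q^\ell$ for $\rho = \mathrm{AI}_{E/F}(\theta)$) provides a useful sanity check, and also matches the level-zero line of Example~\ref{gl2-dim} when $\ell=0$.
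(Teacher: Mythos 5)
Your reduction via Proposition~\ref{twisted-char} (with $e=1$) matches the paper, but the computational core of your plan has a genuine gap: you treat $I_\tau$ on $V^{K_N}$ as a twisted permutation operator (Galois action on $B(E)\backslash\GL_2(E)/K_N(E)$ combined with the Weyl swap), so that its trace becomes a fixed-point count weighted by character values. That description is not justified and is not what $I_\tau$ is. Because the inducing character $\theta\boxtimes\theta^\tau$ is not $\tau$-invariant (only the swapped pair is), any isomorphism $\Pi\to\Pi^\tau$ is forced to be the Galois map composed with the \emph{standard intertwining operator} $\nInd_{B(E)}^{G(E)}(\theta\boxtimes\theta^\tau)\to\nInd_{B(E)}^{G(E)}(\theta^\tau\boxtimes\theta)$, i.e.\ an integral over the unipotent radical; on $K_N$-invariants this is a non-monomial matrix, so its trace is not a weighted count of fixed double cosets with character-value weights. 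The paper's proof computes exactly the diagonal entries of this composite in the basis of twisted indicator functions $\one_{g_0}$, $g_0\in B(E)\backslash\GL_2(E)/K_N(E)$: each diagonal entry is a regularized integral $\lim_{s\to0^-}\int_E\one_{g_0}\bigl(\begin{smallmatrix}&1\\1&x\end{smallmatrix}\bigr.\bigl.\tau(g_0)\bigr)|x|^s\,dx$, which vanishes for $g_0=I_2$ (by a change of variables using genericity of $\theta$), vanishes for the representatives $\bigl(\begin{smallmatrix}1&\\ \varpi^i\sqrt{\alpha}&1\end{smallmatrix}\bigr)$ when $N-i\le\ell$, and equals a \emph{volume} $q^{2(i-N)+1}$ (not a character value) when $N-i\ge\ell+1$; these are then summed against the sizes of the $K_0(F)$-orbits of double cosets.

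Two further points where your sketch diverges from what actually happens. First, the normalization you defer as "the main obstacle" is precisely where the factor $q^\ell$ comes from: by Bump's formula (\cite[Prop~4.5.6]{bump}) the Whittaker-normalized $I_\tau$ equals $q^{\ell}$ times the bare intertwining integral, and without pinning this down you cannot get $-2q^\ell$ (your proposed mechanism, a character-sum collapse producing a main term proportional to $q^\ell$ from "generic fixed points," is not how the $q^\ell$ arises). Second, your attribution of terms is essentially reversed: in the actual computation the large orbits of non-identity representatives produce the leading $(q+1)X$ (after the $q^\ell$ normalization and the truncation of the geometric series at $i\le N-\ell-1$), while the identity coset contributes $0$ and the boundary terms give the $-2$. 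Your cross-checks against Proposition~\ref{murn}~(1) and Example~\ref{gl2-dim} are valid sanity checks, but the proof itself needs the intertwining-integral description of $I_\tau$ and the explicit evaluation of those regularized integrals.
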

\begin{proof}
Let $E=F(\sqrt\alpha)$ with $v_F(\alpha)=0$ and let $\varpi\in\sO_F$ be the generator of the maximal ideal $\p_F$. By Proposition~\ref{twisted-char}, it suffices to calculate the trace of $I_\tau$ on $(\theta\times\theta^\tau)^{K_N(E)}$. By \cite[Prop~4.5.6]{bump} it is given by
\begin{align*}
    q^{-\ell}I_\tau\colon\nInd_{B(E)}^{G(E)}(\theta\boxtimes\theta^\tau)&\to \nInd_{B(E)}^{G(E)}(\theta\boxtimes\theta^\tau)^\tau\\
    f&\mapsto\bigg(g\mapsto\lim_{s\to0^-} \int_Ef\big(\begin{pmatrix}&1\\1&x\end{pmatrix}\tau(g)\big)|x|^sdx\bigg).
\end{align*}
For $g_0\in\GL_2(E)$, define the twisted indicator function
\[
\one_{g_0}(g)\colonequals\begin{cases}(\theta(\frac12)\boxtimes\theta^\tau(-\frac12))(b)&\text{if }g=bg_0u\text{ for some }b\in B(E),u\in K_N(E)\\
0&\text{if }g\notin B(E)g_0K_N(E),\end{cases}
\]
so that $\{\one_{g_0}\}_{g_0\in B(E)\backslash\GL_2(E)/K_N(E)}$ is a basis for $(\theta\times\theta^\tau)^{K_N(E)}$. Note that by Iwasawa decomposition we may take $g_0\in K_0(E)$, in which case $\one_{g_0}$ is well-defined for $N>\ell$. Thus,
\begin{equation}\label{trace-integral}
\mathrm{tr}(q^{-\ell}I_\tau|_{(\theta\times\theta^\tau)^{K_N}})=\sum_{g_0\in B(E)\backslash\GL_2(E)/K_N(E)}\lim_{s\to 0^-}\int_E\one_{g_0}\big(\begin{pmatrix}&1\\1&x\end{pmatrix}\tau(g_0)\big)|x|^sdx.
\end{equation}
Moreover, for arbitrary $k\in K_0(F)$ we have
\[
\one_{g_0}\big(\begin{pmatrix}&1\\1&x\end{pmatrix}\tau(g_0)\big)=\one_{g_0k}\big(\begin{pmatrix}&1\\1&x\end{pmatrix}\tau(g_0k)\big),
\]
since if $\begin{pmatrix}&1\\1&x\end{pmatrix}\tau(g_0)=bg_0u$ then $\begin{pmatrix}&1\\1&x\end{pmatrix}\tau(g_0k)=bg_0k(k^{-1}uk)$, where $k^{-1}uk\in K_N(E)$. Thus, the sum in equation~\eqref{trace-integral} can be combined by $K_0(F)$-orbits. Those orbits are represented by
\[
\big\{I_2,\begin{pmatrix}1\\\varpi^i\sqrt{\alpha}&1\end{pmatrix}:0\le i<N\big\},
\]
where the orbit of $I_2$ has size $q^{N-1}(q+1)$ by Lemma~\ref{parabolic-size}, and the orbit of $\begin{pmatrix}1\\\varpi^i\sqrt{\alpha}&1\end{pmatrix}$ has size $(q^2-q)q^{2(N-1)}$ for $i=0$ and $(q^2-1)q^{2(N-1)-i}$ for $i>0$. What is left is to calculate the integral in equation~\ref{trace-integral} for each of these choices of $g_0$.

When $g_0=I_2$, then $\begin{pmatrix}&1\\1&x\end{pmatrix}\in BK_N$ exactly when $v_E(x)\le -N$, in which case $b=\begin{pmatrix}-x^{-1}&*\\&x\end{pmatrix}$ and so the desired integral is
\[
\lim_{s\to 0^-}\int_{v(x)\le -N}\theta(-x^{-1})\theta^\tau(x)|x|^sdx.
\]
The change of variables $x\mapsto ux$ for $u\in\sO_E^\times$ such that $\theta(\tau(u))\ne\theta(u)$ shows the integral is $0$.

Next, when $g_0=\begin{pmatrix}1\\\varpi^i\sqrt{\alpha}&1\end{pmatrix}$, we have
\[
\begin{pmatrix}&1\\1&x\end{pmatrix}\begin{pmatrix}1\\-\varpi^i\sqrt{\alpha}&1\end{pmatrix}=\begin{pmatrix}*&*\\1-x\varpi^i\sqrt\alpha&x\end{pmatrix}\in Bg_0K_N
\]
exactly when $(1-x\varpi^i\alpha)x^{-1}\equiv\varpi^i\alpha\pmod{\p_E^N}$, i.e., $x\in\frac1{2\varpi^i\sqrt\alpha}(1+\p_E^{N-i})$. Thus, the integral is (using that the measure $\frac{dx}{|x|}$ is a Haar measure for $E^\times$):
\[
\int_{\frac1{2\varpi^i\sqrt\alpha}(1+\p_E^{N-i})}\theta(-x^{-1})\theta^\tau(x)\frac{dx}{|x|}=\int_{1+\p_E^{N-i}}\theta(x^{-1})\theta^\tau(x)\frac{dx}{|x|}.
\]
If $N-i\ge\ell+1$ then $\theta(x^{-1})\theta^\tau(x)=1$ on $1+\p_E^{N-i}$, so the integral is simply $\int_{1+\p^{N-i}}\frac{dx}{|x|}=q_E^{(i-N)+\frac12}=q^{2(i-N)+1}$. On the other hand, if $N-i\le\ell$ then looking at the change of variables $x\mapsto ux$ for $u\in 1+\p_E^\ell$ such that $\theta(\tau(u))\ne\theta(u)$ shows the integral is $0$. Thus, in conclusion,
\begin{align*}
    q^{-\ell}\mathrm{tr}(I_\tau|_{(\theta\times\theta^\tau)^{K_N}})&=(q^2-q)q^{2(N-1)}q^{-2N+1}+\sum_{i=0}^{N-\ell-1}(q^2-1)q^{2(N-1)-i}q^{2(i-N)+1}\\
    &=(q+1)q^{N-\ell-1}-2.\qedhere
\end{align*}
\end{proof}

\begin{rmk}
More generally, let $\rho\in\mathbf{Irr}(\GL_n(E))$ be supercuspidal and consider $\pi=\mathrm{AI}_{E/F}\rho$. Then, the same arguments show the growth polynomial $G_\pi$ is related to the trace of the intertwining integral $\rho\times\cdots\times\rho^{\tau^{d-1}}\to(\rho\times\cdots\times\rho^{\tau^{d-1}})^\tau$. Moreover, the trace is given by a sum over the double coset $P_{n+\cdots+n}(E)\backslash\GL_{nd}(E)/K_N(E)K_0(F)$, the $K_0(F)$-orbits in the partial flag variety over $\sO_E/\p_E^N$. Looking more closely at the combinatorial structure of such double cosets is expected to give new results.
\end{rmk}

\section*{Acknowledgement}
The author thanks Hao Peng for mentoring this project and giving helpful advice. We thank Zhiyu Zhang for suggesting this project. We thank Prof. Wei Zhang and Prof. Ju-Lee Kim for giving us some relevant references and pointing us in the right direction. We thank Dr. Max Gurevich for pointing out an error in the original argument for Proposition~\ref{delta-dim}. We thank Prof. Marie-France Vign\'{e}ras and Prof. Guy Henniart for helpful discussions, as well as fixing an error in Remark~\ref{gl2-dim}.

Finally, we thank Prof. David Jerison and Prof. Ankur Moitra for organizing the SPUR program and helping us through insightful discussions.



\appendix
\section{Representations of $\SL_n(F)$}\label{section-sl-n}

Let $(\varphi,V)$ be an irreducible representation of $\SL_n(F)$. We reduce the calculation of the \emph{leading term} of the growth polynomial of $\varphi$ to the theory for $\GL_n(F)$.

We first extend $\varphi$ to a representation of $Z\SL_n(F)$, which is of finite-index in $\GL_n(F)$. The central character
\[
\omega_\varphi\colon Z\cap\SL_n(F)\to\C^\times
\]
can be extended to a character $\omega\colon Z\to\C^\times$, since the Pontryagin dual of $Z\cap\SL_n(F)\hookrightarrow Z$ is surjective. Now, consider an extension of $\varphi$
\begin{align*}
\widetilde\varphi\colon Z\SL_n(F)&\to\GL(V)\\
zg&\mapsto\omega(z)\varphi(g).
\end{align*}
Since $Z\SL_n\subset\GL_n$ has finite index, by \cite[Lem~2.7]{Bush-Henn} the induced representation $\Ind_{Z\SL_n}^{\GL_n}\widetilde\rho$ decomposes into a direct product of irreducible representations of $\GL_n(F)$, say as $\pi_1\oplus\cdots\oplus\pi_k$. Moreover, by \cite[Thm~1.2]{tadic} the restrictions $\pi_i|_{Z\SL_n(F)}$ only contain $\widetilde\varphi$ once, so the $\pi_i$ are distinct:
\[
\hom_{\GL_n}(\pi_i,\Ind_{Z\SL_n}^{\GL_n}\widetilde\varphi)=\hom_{Z\SL_n}(\pi_i|_{Z\SL_n},\widetilde\varphi)
\]
is one-dimensional. Since there are nonzero homomorphisms
\[
\pi_i|_{Z\SL_n}\to\widetilde\varphi\to\pi_j|_{Z\SL_n},
\]
by \cite[Prop~2.4]{tadic} there exists a character $\chi\colon\GL_n/Z\SL_n\to\C^\times$ such that $\pi_i\cong\chi\pi_j$. Conversely, if $\pi_i$ is an irreducible component of $\Ind_{Z\SL_n}^{\GL_n}\widetilde\varphi$ then so is any twist $\chi\pi_1$ by a character $\chi\colon\GL_n/Z\SL_n\to\C^\times$. Thus, in fact
\[
(\Ind_{Z\SL_n}^{\GL_n}\widetilde\varphi)^{\oplus d}\cong\bigoplus_{\chi\colon \GL_n/Z\SL_n\to\C^\times}\chi\pi_1,
\]
where $d$ is the size of the subgroup
\[
\{\chi\colon \GL_n/Z\SL_n\to\C^\times:\chi\pi_1\cong\pi_1\}.
\]
Note that $d$ is finite since $\GL_n/Z\SL_n\cong F^\times/(F^\times)^n\cong\mathbb Z/n$ is finite. Now, the $K_N$ invariants are, by Lemma~\ref{fixed},
\begin{equation}\label{sl-n-eq}
\bigoplus_{g\in Z\SL_n\backslash\GL_n/K_N}\widetilde\varphi^{Z\SL_n\cap gK_Ng^{-1}}=\bigoplus_{g\in Z\SL_n\backslash\GL_n/K_N}\widetilde\varphi^{gK_Ng^{-1}}\cong (\pi_1^{K_N})^{\oplus n/d}.
\end{equation}
Since each $\{gK_Ng^{-1}\}$ is a fundamental system of neighborhoods of $\SL_n$ decreasing at the same rate, by Harish-Chandra's local character formula they all behave the same asymptotically:
\[
\dim(\widetilde\varphi^{gK_Ng^{-1}})=(1+o(1))\dim(\widetilde\varphi^{hK_Nh^{-1}}).
\]
Thus, equation~\ref{sl-n-eq} gives:
\[
\frac nd\dim(\pi_1^{K_N})=(1+o(1))\cdot |Z\SL_n\backslash\GL_n/K_N|\dim(\widetilde\varphi^{K_N})=(n+o(1))\dim(\varphi^{K_N\cap\SL_n}),
\]
so that
\[
\dim(\varphi^{K_N\cap\SL_n})=(\frac1d+o(1))\dim(\pi_1^{K_N}).
\]

To summarize:

\begin{prop}\label{sl-n-dim}
Let $\varphi$ be an irreducible representation of $\SL_n$. There exists an irreducible representation $\pi_1$ of $\GL_n$ such that $\varphi\hookrightarrow\pi_1|_{\SL_n}$. Then,
\[
\dim(\varphi^{K_N\cap\SL_n})=\big(\frac 1d+o(1)\big)\dim(\pi_1^{K_N}),\]
where $d$ is the number of characters $\chi\colon\GL_n/Z\SL_n(F)\to\C^\times$ such that $\chi\pi_1\cong\pi_1$ as $\GL_n$-representations.
\end{prop}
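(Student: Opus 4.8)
The plan is to embed $\varphi$ into the restriction of a genuine $\GL_n(F)$-representation and then transfer the asymptotics of $\dim(\pi_1^{K_N})$ --- already under control from the body of the paper (Theorem~\ref{main-thm}) --- across the finite-index subgroup $Z\SL_n(F)\subset\GL_n(F)$. First I would extend $\varphi$ to $Z\SL_n(F)$: the central character of $\varphi$ on $Z\cap\SL_n(F)$ extends to a character $\omega$ of $Z$, since restriction of characters along $Z\cap\SL_n(F)\hookrightarrow Z$ is surjective by Pontryagin duality; then $\widetilde\varphi(zg)\colonequals\omega(z)\varphi(g)$ is a well-defined irreducible smooth representation of $Z\SL_n(F)$ with $\varphi\hookrightarrow\widetilde\varphi|_{\SL_n(F)}$.

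Next I would analyze $\Ind_{Z\SL_n(F)}^{\GL_n(F)}\widetilde\varphi$. Since $\GL_n(F)/Z\SL_n(F)\cong F^\times/(F^\times)^n$ is finite, this induction is semisimple of finite length, say $\pi_1\oplus\cdots\oplus\pi_k$ with the $\pi_i$ irreducible. Frobenius reciprocity identifies $\hom_{\GL_n}(\pi_i,\Ind_{Z\SL_n}^{\GL_n}\widetilde\varphi)$ with $\hom_{Z\SL_n}(\pi_i|_{Z\SL_n},\widetilde\varphi)$; the multiplicity-one theorem for restriction from $\GL_n$ to $\SL_n$ shows this is at most one-dimensional, so the $\pi_i$ are pairwise inequivalent, and composing nonzero maps $\pi_i|_{Z\SL_n}\to\widetilde\varphi\to\pi_j|_{Z\SL_n}$ produces a character $\chi$ of $\GL_n/Z\SL_n$ with $\pi_i\cong\chi\pi_j$. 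Since, conversely, every twist $\chi\pi_1$ occurs, letting $d$ be the (finite) order of the stabilizer $\{\chi:\chi\pi_1\cong\pi_1\}$ gives
\[
\bigl(\Ind_{Z\SL_n}^{\GL_n}\widetilde\varphi\bigr)^{\oplus d}\;\cong\;\bigoplus_{\chi\colon\GL_n/Z\SL_n\to\C^\times}\chi\pi_1 .
\]

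Then I would pass to $K_N$-invariants for $N\gg0$. On the right, each such $\chi$ is trivial on $K_N$ (as $\det K_N=1+\p_F^N\subset(F^\times)^n$ by Hensel), so each summand contributes a space of dimension $\dim(\pi_1^{K_N})$. On the left, the Mackey formula of Lemma~\ref{fixed} gives $\bigl(\Ind_{Z\SL_n}^{\GL_n}\widetilde\varphi\bigr)^{K_N}\cong\bigoplus_{g}\widetilde\varphi^{\,gK_Ng^{-1}\cap Z\SL_n}$, the sum over $g\in Z\SL_n\backslash\GL_n/K_N$, and a direct double-coset computation shows $|Z\SL_n\backslash\GL_n/K_N|=|\GL_n/Z\SL_n|$ for $N\gg0$, a factor that cancels against the one on the right. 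Finally, each $gK_Ng^{-1}\cap\SL_n(F)$ is conjugate to $K_N\cap\SL_n(F)$, so the family $\{gK_Ng^{-1}\cap\SL_n(F)\}_g$ consists of neighborhood bases of $1\in\SL_n(F)$ shrinking at a common rate; since Harish-Chandra and Howe's local character expansion (Proposition~\ref{howe-local-character}) of $\varphi$ is conjugation-invariant, $\dim\widetilde\varphi^{\,gK_Ng^{-1}\cap Z\SL_n}=(1+o(1))\dim\varphi^{K_N\cap\SL_n}$ uniformly in $g$. Combining these and dividing yields $\dim\varphi^{K_N\cap\SL_n}=(\tfrac1d+o(1))\dim\pi_1^{K_N}$.

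The step I expect to be the main obstacle is the second one: establishing that $\Ind_{Z\SL_n}^{\GL_n}\widetilde\varphi$ is multiplicity-free on the $\widetilde\varphi$-isotypic level --- so that $d$ is an honest constant depending only on $\pi_1$, not on $N$ --- and that all its irreducible constituents are character twists of a single $\pi_1$; this is where the fine structure theory of the restriction from $\GL_n$ to $\SL_n$ enters. Once that is in place, the asymptotic transfer in the last step is essentially formal, relying only on conjugation-invariance of the germ expansion and on conjugate congruence subgroups contracting at the same rate.
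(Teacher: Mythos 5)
Your argument is essentially the paper's own proof: extend $\varphi$ to $Z\SL_n(F)$ via a central character, decompose $\Ind_{Z\SL_n}^{\GL_n}\widetilde\varphi$ into pairwise inequivalent character twists of a single $\pi_1$ using the multiplicity-one and twisting results for restriction to $\SL_n$ (the paper cites Tadi\'c for exactly this, the step you flag as the main obstacle), then compare $K_N$-invariants via Lemma~\ref{fixed} and the observation that the conjugate subgroups $gK_Ng^{-1}$ shrink at the same rate so their invariant spaces agree to leading order by the local character expansion. The proposal is correct and takes the same route, differing only in making explicit minor points (e.g.\ that the finitely many characters of $\GL_n/Z\SL_n$ are trivial on $K_N$ for $N\gg0$) that the paper leaves implicit.
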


\begin{rmk}
If $\varphi$ is supercuspidal, then so is $\pi_1$ \cite{tadic}. In the language of \cite{Bushnell}, let $\pi_1=\cInd_{\mathbf J}^G\Lambda$, where $(\mathbf J=\mathbf J(\beta,\mathfrak a),\Lambda)$ is an EMST in $G$. Then, by \cite[Lem~3.1.3]{Bushnell} and \cite[3.1.4]{Bushnell}, $d=e(F[\beta]/F)$, the ramification degree, where $\beta\in\GL_n(F)$ is such that $F[\beta]$ is a field.
\end{rmk}

The correcting factor $d$ can be deduced from the Bernstein-Zelevinsky classification of the representation $\pi_1$:
\begin{cor}
Let $\varphi\hookrightarrow\pi_1|_{\SL_n(F)}$ be an irreducible representation of $\SL_n$, where $\pi_1=\langle a\rangle$, with $a=\{\Delta_1,\dots,\Delta_m\}$ a multisegment, where $\Delta_i=[\rho_i,\nu^{r_i-1}\rho_i]$ for a supercuspidal representation $\rho_i$ of $\GL_{n_i}(F)$. Let $d$ be the number of characters $\chi\colon F^\times/(F^\times)^n\to\C^\times$ such that $\{\Delta_1\otimes\chi,\dots,\Delta_m\otimes\chi\}=\{\Delta_1,\dots,\Delta_m\}$ as multisets, where $[\rho,\nu^{r-1}\rho]\otimes\chi\colonequals[\rho\otimes\chi\circ\det,\nu^{r-1}\rho\otimes\chi\circ\det]$. Then,
\[
\dim(\varphi^{K_N\cap\SL_n})=(1+o(1))\frac1d\frac{[n!]_q}{[r_1!]_{q^{n_1}}\cdots[r_m!]_{q^{n_m}}}(q^{N-1})^{\frac12(n^2-n_1r_1^2-\cdots-n_mr_m^2)},
\]
\end{cor}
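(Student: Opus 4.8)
The plan is to combine Proposition~\ref{sl-n-dim} with Theorem~\ref{main-thm}; the only new content is to reinterpret the integer $d$ of Proposition~\ref{sl-n-dim}---the number of characters $\chi\colon\GL_n/Z\SL_n(F)\to\C^\times$ with $\chi\pi_1\cong\pi_1$---in terms of the multisegment $a$ with $\pi_1=\langle a\rangle$. First I would note that $\det$ induces an isomorphism $\GL_n(F)/Z\SL_n(F)\xrightarrow{\sim}F^\times/(F^\times)^n$, so a character of $\GL_n/Z\SL_n(F)$ is the same datum as a character $\chi$ of $F^\times/(F^\times)^n$ pulled back via $\det$; acting on $\pi_1$ it is simply the twist $\pi_1\otimes(\chi\circ\det)$.

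The key step is that twisting by $\chi\circ\det$ is compatible with the Bernstein--Zelevinsky classification. Concretely, $(-)\otimes(\chi\circ\det)$ is an exact autoequivalence of $\mathrm{Rep}(\GL)$ which commutes with parabolic induction, $(\pi_1\times\cdots\times\pi_r)\otimes\chi\cong(\pi_1\otimes\chi)\times\cdots\times(\pi_r\otimes\chi)$, carries supercuspidal representations to supercuspidal representations, and carries a segment $\Delta=[\rho,\nu^{r-1}\rho]$ to the segment $\Delta\otimes\chi=[\rho\otimes\chi\circ\det,\nu^{r-1}\rho\otimes\chi\circ\det]$, using that $\chi\circ\det$ commutes with $\nu$. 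Since the precedence relation between segments is defined via $\nu$-shifts, it is preserved by $\otimes\chi$; hence $\langle a\rangle$, being the unique irreducible submodule of $\langle\Delta_1\rangle\times\cdots\times\langle\Delta_m\rangle$ for a suitable ordering of the $\Delta_i$, is carried by the autoequivalence to the unique irreducible submodule of $\langle\Delta_1\otimes\chi\rangle\times\cdots\times\langle\Delta_m\otimes\chi\rangle$, that is, $\langle a\rangle\otimes\chi\cong\langle a\otimes\chi\rangle$ where $a\otimes\chi\colonequals\{\Delta_1\otimes\chi,\dots,\Delta_m\otimes\chi\}$. By the uniqueness part of the Bernstein--Zelevinsky classification \cite[Thm~6.1]{BZ2}, the map $a\mapsto\langle a\rangle$ is injective, so $\langle a\rangle\otimes\chi\cong\langle a\rangle$ if and only if $a\otimes\chi=a$ as multisets. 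This identifies the $d$ of Proposition~\ref{sl-n-dim} with the $d$ in the statement.

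Finally I would substitute the asymptotic of Theorem~\ref{main-thm} for $\dim(\pi_1^{K_N})$ into the formula $\dim(\varphi^{K_N\cap\SL_n})=(\tfrac1d+o(1))\dim(\pi_1^{K_N})$ of Proposition~\ref{sl-n-dim}, yielding the claimed expression. The main obstacle is the compatibility claim $\langle a\rangle\otimes\chi\cong\langle a\otimes\chi\rangle$: one must check that twisting by $\chi\circ\det$ preserves the linking/ordering data used to single out $\langle a\rangle$ inside the product of the $\langle\Delta_i\rangle$. This is not deep---it reduces to the observation that $\chi\circ\det$ commutes with $\nu$ and with parabolic induction---but it is the only point requiring care.
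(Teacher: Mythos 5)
Your proposal is correct and follows exactly the paper's route: the paper's proof is a one-line combination of Proposition~\ref{sl-n-dim}, Theorem~\ref{main-thm}, and the uniqueness of the Bernstein--Zelevinsky classification. Your careful verification that $\langle a\rangle\otimes\chi\cong\langle a\otimes\chi\rangle$ (so that the $d$ of Proposition~\ref{sl-n-dim} equals the combinatorial $d$ counting $\chi$ with $a\otimes\chi=a$) is precisely the content the paper delegates to the citation of \cite[Thm~6.1]{BZ2}.
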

\begin{proof}
Immediate from Proposition~\ref{sl-n-dim}, Theorem~\ref{main-thm} and the uniqueness of the Bernstein-Zelevinsky classification \cite[Thm~6.1]{BZ2}.
\end{proof}
Thus, as opposed to $\GL_n$, where the leading coefficient of growth polynomials only involved the combinatorial structure of the multisegment, the leading coefficient for the growth polynomial of representations of $\SL_n$ involves the properties of the supercuspidal representations $\rho_i$.
\begin{example}
Let $\varphi\hookrightarrow(\langle[\rho,\nu\rho]\rangle\times\langle[\chi\rho,\nu\chi\rho]\rangle)|_{\SL_n}$ where $\chi(x)\colonequals(-1)^{v_F(x)}$ is a character of $F^\times$, and $\rho=\cInd_{\mathbf J}^{\GL_{n_1}}\Lambda$ where $(\mathbf J=\mathbf J(\beta,\mathfrak a),\Lambda)$ is an EMST in $\GL_{n_1}(F)$, with $d_1=e(F[\beta]/F)$ odd. Then, $2d_1$ characters of $F^\times/(F^\times)^n$ fix the multisegment $\{[\rho,\nu\rho],[\chi\rho,\nu\chi\rho]\}$ (generated by $\chi$ and the characters fixing $\rho$), so:
\[
\dim(\varphi^{K_N\cap\SL_n})=(1+o(1))\frac1{2d_1}\frac{[n!]_q}{[2]_{q^{n_1}}^2}(q^{N-1})^{8n_1^2-4n_1}.
\]
\end{example}

\section{Direct calculation of the Gelfand-Kirillov dimension of supercuspidal representations}\label{appendix-cusp}

We will provide a different proof of a weaker form of Proposition~\ref{cusp-asymp}, based on Bushnell's classification of supercuspidal representations of $\GL_n(F)$:

\begin{thm}[Bushnell-Kutzko {\cite{Bush-Kutz}}]
If $\rho$ is an irreducible supercuspidal representation of $G=\GL_n(F)$, then
\[
\rho\cong \cInd_{\bK}^G\Lambda,
\]
for some compact modulo center subgroup $\bK\supset Z$ and representation $\Lambda$ on $\bK$.
\end{thm}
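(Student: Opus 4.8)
The plan is to invoke the theory of \emph{simple types} of Bushnell and Kutzko, which attaches to $\rho$ an explicit pair $(\bK,\Lambda)$ of the desired kind. I would first read off arithmetic invariants of $\rho$ (a simple stratum, then a maximal simple character), use them to build a \emph{maximal simple type} contained in $\rho$, extend that type to the $G$-normalizer $\bK$ of the associated compact open subgroup, and finally deduce the isomorphism $\rho\cong\cInd_{\bK}^G\Lambda$ from a Mackey-theoretic criterion bounding the intertwining of $\Lambda$.

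\textbf{Step 1 (attach a simple stratum).} By analysing the restriction of $\rho$ to the congruence filtration of hereditary $\sO_F$-orders in $M_n(F)$, one shows $\rho$ contains a \emph{fundamental stratum} $(\mathfrak A,n,n-1,\beta)$, where $\mathfrak A\subset M_n(F)$ is such an order and $\beta\in\mathfrak A$. A fundamental stratum is either \emph{split} or \emph{simple}, and a split one would realise $\rho$ as a subquotient of a properly parabolically induced representation, contradicting supercuspidality. Hence $\rho$ contains a \emph{simple stratum}: $E:=F[\beta]$ is a field and $\beta$ satisfies the non-degeneracy condition defining simplicity.

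\textbf{Step 2 (exhaustion --- the main obstacle).} From a simple stratum one constructs the chain of compact open subgroups $H^1(\beta,\mathfrak A)\subseteq J^1(\beta,\mathfrak A)\subseteq J(\beta,\mathfrak A)$ and the set $\mathcal C(\mathfrak A,\beta)$ of \emph{simple characters} of $H^1(\beta,\mathfrak A)$. The heart of the argument --- and the step I expect to be hardest --- is the \emph{exhaustion theorem}: every irreducible supercuspidal $\rho$ contains a simple character belonging to a \emph{maximal} simple stratum, i.e.\ one for which $\mathfrak A\cap B$ is a maximal $\sO_E$-order in the centralizer $B$ of $E$ in $M_n(F)$, so that $J/J^1\cong\GL_f(k_E)$ with $f=n/[E:F]$. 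I would prove this by an inductive refinement: starting from the stratum of Step 1, repeatedly replace a non-maximal simple stratum by one attached to a strictly larger field $E$, tracking at each stage which simple characters $\rho$ contains. This requires the full intertwining calculus of simple characters and is the deepest input.

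\textbf{Step 3 (build the type and conclude).} Over a maximal simple character $\theta\in\mathcal C(\mathfrak A,\beta)$ contained in $\rho$ there lies a unique irreducible representation $\eta$ of $J^1$, the Heisenberg representation determined by the symplectic module $J^1/H^1$, and $\rho|_{J^1}$ contains $\eta$. Extend $\eta$ to a \emph{$\beta$-extension} $\kappa$ of $J$, normalised so that its intertwining in $G$ is $J^1B^\times J^1$. Since $J/J^1\cong\GL_f(k_E)$ and $\rho$ is supercuspidal, the $\kappa$-twist of $\mathrm{Hom}_{J^1}(\eta,\rho)$, viewed as a $\GL_f(k_E)$-module, must contain an \emph{irreducible cuspidal} representation $\sigma$; then $\Lambda_0:=\kappa\otimes\sigma$ is a maximal simple type on $J$ with $\mathrm{Hom}_J(\Lambda_0,\rho)\ne0$. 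Now take $\bK:=\mathfrak J(\beta,\mathfrak A)=E^\times J$, the $G$-normalizer of $J$; since $E^\times J/F^\times J$ is finite cyclic and $J$ is compact, $\bK$ is compact modulo $Z$ and $\bK\supset Z$. Extend $\Lambda_0$ to an irreducible representation $\Lambda$ of $\bK$ with $\mathrm{Hom}_{\bK}(\Lambda,\rho)\ne0$, and check --- from the intertwining formula for $\kappa$ together with the cuspidality of $\sigma$ --- that $\Lambda$ is intertwined in $G$ only by elements of $\bK$. Bushnell and Kutzko's Mackey criterion then gives that $\cInd_{\bK}^G\Lambda$ is irreducible, and it is supercuspidal since $\bK$ is compact modulo $Z$; finally, Frobenius reciprocity applied to $\mathrm{Hom}_{\bK}(\Lambda,\rho)\ne0$ produces a nonzero $G$-homomorphism $\cInd_{\bK}^G\Lambda\to\rho$, which is an isomorphism as both sides are irreducible.
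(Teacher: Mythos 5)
The paper itself contains no proof of this statement: it is imported verbatim from Bushnell--Kutzko \cite{Bush-Kutz} and used as a black box in Appendix B, so the only thing to measure your proposal against is the cited literature. There your outline is essentially the genuine article: the chain fundamental stratum $\to$ simple stratum $\to$ simple character $\to$ maximal simple type $\to$ $\beta$-extension $\to$ intertwining bound $\to$ irreducibility of $\cInd_{\bK}^G\Lambda$ is exactly the architecture of the Bushnell--Kutzko construction, and you are right that the exhaustion theorem is the deep step. But be clear that what you have written is a roadmap, not a proof: Step 2 names the hardest input without any argument, and the claim in Step 1 that a split fundamental stratum immediately realises $\rho$ inside a proper parabolic induction compresses a long refinement argument into one sentence.

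One concrete gap: Step 1 assumes $\rho$ contains a fundamental stratum $[\mathfrak A,n,n-1,\beta]$ with $n\ge1$, which is only true when $\rho$ has positive level. Level-zero supercuspidal representations (those with $\rho^{K_1}\ne0$) contain no such stratum; for them the inducing datum is instead the inflation to $K_0=\GL_n(\sO_F)$ of a cuspidal representation of $\GL_n(k_F)$, extended to $\bK=ZK_0$ --- precisely the description the paper records in Appendix C. As stated, your argument silently excludes this case, so the dichotomy ``split or simple'' must be preceded by the dichotomy ``positive level or level zero,'' with the level-zero case handled by the (much easier) finite-field cuspidal construction. Granting that correction and the unproved exhaustion input, the plan is sound and matches the proof in the reference the paper cites.
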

Here, since $\Lambda$ is an irreducible representation on a compact modulo center subgroup $\bK$, it must be finite-dimensional.

The following lemma on the size of double cosets is useful:

\begin{lemma}\label{coset-size}
For $g\in G$, the size of the double coset $K_0\backslash K_0gK_0/K_N$ depends only $g\in K_0\backslash G/ZK_0$, and when $g=(\varpi^{a_1},\cdots,\varpi^{a_n})$ with $a_1\ge\cdots\ge a_n$ then
\[
|K_0\backslash K_0gK_0/K_N|=q^{\frac12(n_1^2+\cdots+n_r^2-n^2)+\sum_{i<j\le n}\min\{a_i-a_j,N\}}\begin{bmatrix}n\\n_1,\dots,n_r\end{bmatrix}_q,
\]
where $n=n_1+\cdots+n_r$ is a partition such that
\[
a_1=\cdots=a_{n_1}>a_{n_1+1}=\cdots=a_{n_1+n_2}>\cdots>a_{n_1+\dots+n_{r-1}+1}=\cdots=a_n,
\]
and
\[
\begin{bmatrix}n\\n_1,\dots,n_r\end{bmatrix}_q\colonequals \frac{[n!]_q}{[n_1!]_q\cdots[n_r!]_q},
\]
where $[n!]_q\colonequals \prod_{k=1}^n\frac{q^n-1}{q-1}$ is the $q$-analogue of $n!$.
\end{lemma}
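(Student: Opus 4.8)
The plan is to reduce to the case $g=\mathrm{diag}(\varpi^{a_1},\dots,\varpi^{a_n})$ with $a_1\ge\cdots\ge a_n$ using the Cartan decomposition of $\GL_n(F)$: every double coset $K_0gK_0$ contains a unique representative of this form, and two such representatives lie in the same $ZK_0$–double coset exactly when they differ by a common shift $a_i\mapsto a_i+c$. Since the claimed formula depends on the $a_i$ only through the differences $a_i-a_j$ and the partition $(n_1,\dots,n_r)$, both shift–invariant, proving the formula for all diagonal $g$ simultaneously establishes the asserted dependence only on $g\in K_0\backslash G/ZK_0$. For diagonal $g$ as above the first step is the standard bijection
\[
K_0\backslash K_0gK_0/K_N\;\xrightarrow{\ \sim\ }\;H\backslash K_0/K_N,\qquad H\colonequals K_0\cap g^{-1}K_0g,
\]
obtained by writing each element of $K_0gK_0$ as $K_0gk$ with $k\in K_0$ and noting $K_0gk=K_0gk'$ if and only if $k(k')^{-1}\in H$.

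Next I would compute $H$ explicitly. Since $(g^{-1}mg)_{ij}=\varpi^{a_j-a_i}m_{ij}$, intersecting $g^{-1}K_0g$ with $K_0=\GL_n(\sO)$ yields
\[
H=\{m\in\GL_n(\sO):v(m_{ij})\ge\max(0,a_j-a_i)\text{ for all }i,j\},
\]
so, as $a_1\ge\cdots\ge a_n$, the only nontrivial conditions are on entries strictly below the block diagonal, and reducing modulo $\p$ turns $H$ into the parabolic $P_\lambda(k)$ for the partition $\lambda=(n_1,\dots,n_r)$ of the statement. Since $H$ acts freely by left translation on $K_0/K_N=\GL_n(\sO/\p^N)$ through its image $\bar H$, we get
\[
|K_0\backslash K_0gK_0/K_N|=|\bar H\backslash\GL_n(\sO/\p^N)|=\frac{|\GL_n(\sO/\p^N)|}{|\bar H|},
\]
where $\bar H=\{m\in\GL_n(\sO/\p^N):v(m_{ij})\ge\min(\max(0,a_j-a_i),N)\}$; surjectivity of $H\to\bar H$ uses that any lift respecting the sharper conditions over $\sO$ is automatically invertible, its determinant being a unit modulo $\p$.

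The main computational step is to count $|\bar H|$. Writing $m\in\bar H$ blockwise with respect to $\lambda$: invertibility of $m$ is equivalent to invertibility of its diagonal blocks modulo $\p$ (every entry strictly below the block diagonal vanishes mod $\p$, using $N\ge1$), so the diagonal blocks contribute $\prod_\ell|\GL_{n_\ell}(\sO/\p^N)|=\prod_\ell q^{n_\ell^2(N-1)}|\GL_{n_\ell}(k)|$; the entries strictly above the block diagonal are unconstrained, contributing $q^{N(n^2-\sum_\ell n_\ell^2)/2}$; and the entry in a position $(i,j)$ strictly below the block diagonal ranges over $\p^{\min(a_j-a_i,N)}(\sO/\p^N)$, which — since same-block pairs contribute nothing — yields altogether $q^{N(n^2-\sum_\ell n_\ell^2)/2-S}$ with $S=\sum_{i<j}\min(a_i-a_j,N)$. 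Multiplying gives $|\bar H|=q^{Nn^2-\sum_\ell n_\ell^2-S}\prod_\ell|\GL_{n_\ell}(k)|$; dividing this into $|\GL_n(\sO/\p^N)|=q^{n^2(N-1)}|\GL_n(k)|$ and using $|\GL_m(k)|=q^{m(m-1)/2}(q-1)^m[m!]_q$, the $(q-1)$–factors cancel and the powers of $q$ collapse (via $\tfrac12(n(n-1)-\sum_\ell n_\ell(n_\ell-1))=\tfrac12(n^2-\sum_\ell n_\ell^2)$) to $q^{\frac12(n_1^2+\cdots+n_r^2-n^2)+S}\begin{bmatrix}n\\n_1,\dots,n_r\end{bmatrix}_q$, as claimed.

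The delicate point is the count of $|\bar H|$: one must verify that $\bar H$ is exactly the stated congruence subgroup, that invertibility is governed solely by the diagonal blocks modulo $\p$, and that the three families of entries (diagonal-block, strictly above, strictly below the block diagonal) are tallied with the correct exponents — in particular that $\sum_{(i,j)\text{ below block diag.}}\min(a_j-a_i,N)$ reassembles to $\sum_{i<j}\min(a_i-a_j,N)$ because every same-block pair contributes $0$. Everything else is bookkeeping of exponents of $q$.
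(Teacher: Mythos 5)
Your proposal is correct and follows essentially the same route as the paper: both identify $K_0\backslash K_0gK_0/K_N$ with the coset space of the congruence subgroup $K_0\cap g^{\pm1}K_0g^{\mp1}$ inside $\GL_n(\sO/\p^N)$ (yours is the transposed version, via $K_0\cap g^{-1}K_0g$) and then count, with your careful verification that the image $\bar H$ is the full truncated congruence subgroup playing the role of the paper's factor $|(\p/\p^N)/(\p^{b_i-b_j}/\p^N)|$. The only difference is bookkeeping at the end: the paper fibers the quotient over the flag variety ${}^tP_\lambda(k)\backslash\GL_n(k)$ via a simply transitive abelian action, while you compute $|\bar H|$ directly and divide, and both yield the stated exponent.
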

\begin{proof}
First of all, there is a bijection
\begin{align*}
    (K_0\cap gK_0g^{-1})\backslash K_0/K_N&\xrightarrow{\sim}K_0\backslash K_0gK_0/K_N\\
    x&\mapsto gx.
\end{align*}
When $g=(\varpi^{a_1},\cdots,\varpi^{a_n})$ with $a_1\ge\cdots\ge a_n\ge0$, we have
\[
K_0\cap gK_0g^{-1}=\begin{pmatrix}\sO^\times&\p^{a_1-a_2}&\cdots\\{*}&\sO^\times&\p^{a_2-a_3}&\\{*}&{*}&\ddots\\
{*}&&&\sO^\times\end{pmatrix}\subset K_0.
\]
Let $\lambda$ be the partition $n=n_1+\cdots+n_r$ and denote the distinct values $a_{n_1+\dots+n_i}$ as $b_i$. Then, if ${^t}P_\lambda$ is the corresponding parabolic subgroup,
\[
{^t}P_\lambda\colonequals \begin{pmatrix}
\GL_{n_1}&0&\cdots&0\\
{*}&\GL_{n_2}&0&\vdots\\
&&\ddots&0\\
{*}&{*}&&\GL_{n_r}
\end{pmatrix},
\]
then $(K_0\cap gK_0g^{-1})/K_N\supset {^t}P_\lambda(\sO/\p^N)$. Now, the abelian group
\[
\frac{1+M_n(\p/\p^N)}{(1+M_n(\p/\p^N))\cap gK_0g^{-1}}
\]
acts simply on the quotient $(K_0\cap gK_0g^{-1})\backslash K_0/K_N$ from the right with quotient $P_\lambda^-(k)\backslash\GL_n(k)$, so
\begin{align*}
|\frac{\GL_n(\sO/\p^N)}{K_0\cap gK_0g^{-1}}|&=|\frac{1+M_n(\p/\p^N)}{(1+M_n(\p/\p^N))\cap gK_0g^{-1}}|\cdot |\frac{\GL_n(k)}{{^t}P_\lambda(k)}|\\
&=\prod_{i<j\le r}|(\p/\p^N)/(\p^{b_i-b_j}/\p^N)|^{n_in_j}\cdot \begin{bmatrix}
n\\
n_1,\dots,n_r
\end{bmatrix}_q\\
&=q^{\sum_{i<j\le r}n_in_j(\min\{b_i-b_j,N\}-1)}\begin{bmatrix}
n\\
n_1,\dots,n_r
\end{bmatrix}_q\\
&=q^{\frac12(n_1^2+\cdots+n_r^2-n^2)+\sum_{i<j\le n}\min\{a_i-a_j,N\}}\begin{bmatrix}
n\\
n_1,\dots,n_r
\end{bmatrix}_q.\qedhere
\end{align*}
\end{proof}

\begin{prop}\label{dim:cuspidal}
Let $\rho$ be an irreducible supercuspidal representation of $\GL_n(F)$. Then,
\[
\dim_{GK}(\rho)=\frac{n(n-1)}2.
\]
\end{prop}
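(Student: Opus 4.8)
The plan is to compute $\dim(\rho^{K_N})$ directly from the Bushnell--Kutzko presentation $\rho\cong\cInd_{\bK}^G\Lambda$, feeding the Mackey-type formula of Lemma~\ref{fixed} into the double-coset count of Lemma~\ref{coset-size}. First I would conjugate so that $\bK$ normalizes a standard hereditary order $\mathfrak A\subseteq M_n(\sO_F)$; then $\mathfrak U(\mathfrak A)\subseteq\bK\cap K_0$, the latter of finite index in $\bK$ modulo $Z$, and I would record two facts about $\Lambda$: it is finite-dimensional, hence trivial on $\bK\cap K_M$ for some $M$; and, since $\rho$ is \emph{supercuspidal}, on the relevant reductive subquotient of $\bK$ the representation $\Lambda$ restricts to a cuspidal representation of a product of general linear groups over residue fields, so it has no nonzero vector fixed by the image of the unipotent radical of any proper parabolic.

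Next, applying Lemma~\ref{fixed} with $H=\bK$ and $K=K_N$ (legitimate once $N$ is large enough that $Z\cap K_N$ acts through the central character of $\rho$) gives $\rho^{K_N}\cong\bigoplus_{g\in\bK\backslash G/K_N}\Lambda^{\bK\cap gK_Ng^{-1}}$, every summand of dimension $\le\dim\Lambda$. I would organize these cosets by the Cartan decomposition $G=\bigsqcup K_0\mu_{\underline a}K_0$, $\mu_{\underline a}=\mathrm{diag}(\varpi^{a_1},\dots,\varpi^{a_n})$, with $a_1\ge\cdots\ge a_n=0$ modulo $Z$. Since $K_N\triangleleft K_0$, the subgroup $\bK\cap gK_Ng^{-1}$ depends, up to conjugacy, only on the cell of $g$, and $\mu_{\underline a}K_N\mu_{\underline a}^{-1}=1+(\p^{N+a_i-a_j})_{i,j}$; if some consecutive gap $a_i-a_{i+1}$ is $\ge N$ then modulo the pro-$p$ radical this group contains the unipotent radical of a proper (opposite) parabolic, so the corresponding summands vanish by cuspidality. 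Hence only the $O(N^{n-1})$ cells with all consecutive gaps $<N$ contribute, and for each of those Lemma~\ref{coset-size} bounds the number of inner $(K_0,K_N)$-double cosets by $[n!]_q\,q^{\sum_{i<j}\min\{a_i-a_j,N\}}\le[n!]_q\,q^{n(n-1)N/2}$, while passing from $K_0$-cosets to $\bK$-cosets costs only a bounded factor. Combined with the polynomiality of $\dim(\rho^{K_N})$ in $q^{N-1}$ from Corollary~\ref{fixed-poly}, this yields $\dim_{GK}(\rho)\le\tfrac{n(n-1)}2$.

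For the reverse inequality I would exhibit Cartan cells that are as spread out as possible while still contributing: take $\underline a$ with all consecutive gaps just below $N$, so that $a_i-a_j>N$ whenever $|i-j|\ge 2$ and the exponent in Lemma~\ref{coset-size} equals $\tfrac{n(n-1)}2N-O(N)$. For such $\underline a$ the reduction of $\bK\cap\mu_{\underline a}K_N\mu_{\underline a}^{-1}$ modulo the pro-$p$ radical is a product of \emph{non-simple} lower root subgroups, and one checks that a Whittaker/Kirillov vector of the cuspidal type $\Lambda$ is fixed by them, the relevant generic character being supported on the simple root subgroups only; so these cells contribute, giving $\dim(\rho^{K_N})\ge c\,q^{n(n-1)N/2-O(N)}$ and forcing the degree to equal $\tfrac{n(n-1)}2$. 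I expect this last point to be the main obstacle: "supercuspidal" directly kills only the parabolic unipotent radicals, so one must argue separately that the particular subgroups arising from the spread-out cells — together with the bookkeeping for positive-level types — do not annihilate all invariants of $\Lambda$.
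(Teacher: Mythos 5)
Your skeleton (compact induction, Lemma~\ref{fixed}, Cartan cells, Lemma~\ref{coset-size}, comparison of $\bK$- with $ZK_0$-cosets) is the same as the paper's, but both of your two crucial steps rest on mechanisms that do not work for general positive-level supercuspidals. For the upper bound you discard cells with a consecutive gap $\ge N$ by arguing that $\bK\cap gK_Ng^{-1}$ maps onto the unipotent radical of a proper parabolic in the reductive quotient of $\bK$, which cuspidality then kills. This is exactly the level-zero picture (where $\bK=ZK_0$ and the quotient is $\GL_n(k)$, cf.\ Appendix~\ref{appendix-level-0}), but for a positive-level type the reductive quotient is $\GL_m(k_E)$ with $n=m[E:F]$; when $E/F$ is totally ramified of degree $n$ this quotient is $k_E^\times$, which has no proper parabolics at all, so cuspidality gives no vanishing and your mechanism produces nothing. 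Even when $m>1$, identifying the image of $\bK\cap gK_Ng^{-1}$ inside that quotient with a parabolic unipotent radical is unaddressed. The paper avoids this entirely: if a cell with a gap $a_r-a_{r+1}>N$ contributed, then translating the first $r$ exponents by an arbitrary $M$ leaves $(K_0\cap gK_Ng^{-1})\cdot K_m$, hence $\Lambda^{\bK\cap gK_Ng^{-1}}$, unchanged, so infinitely many cells would contribute the same nonzero dimension, contradicting admissibility of $\rho$ ($\dim\rho^{K_N}<\infty$). No cuspidality, no analysis of the type, and no appeal to Corollary~\ref{fixed-poly} (whose character-expansion input the appendix is precisely trying to avoid) is needed.

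For the lower bound you choose cells with consecutive gaps just below $N$, and then must show a ``Whittaker/Kirillov vector'' of $\Lambda$ is fixed by the resulting depth-one lower root subgroups; as you yourself note, this is the main obstacle, and it is a genuine gap: a positive-level type is built from characters $\psi_\alpha$ nontrivial on congruence subgroups, and there is no reason such subdiagonal subgroups at level $\p^{N-(a_i-a_{i+1})}$ fix anything. The paper sidesteps this by choosing the cells less spread out: since $\Lambda$ is finite-dimensional it is trivial on some $K_m$, and for cells with all gaps at most $N-m$ one has $K_0\cap gK_Ng^{-1}\subseteq K_m$, so the \emph{entire} space of $\Lambda$ is invariant and the cell certainly contributes. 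Because $m$ is fixed, Lemma~\ref{coset-size} still gives $q^{\frac{n(n-1)}2 N+O(1)}$ inner cosets from such cells, which is all one needs for $\dim_{GK}(\rho)\ge\frac{n(n-1)}2$. So the correct repairs are: replace your cuspidality argument by the admissibility/translation argument, and replace your ``just below $N$'' cells by cells with gaps below $N-m$ so that the fixing subgroup lies in $\ker\Lambda$.
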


\begin{proof}
Let $\rho=\cInd_{\bK}^G(\lambda)$ for an irreducible representation $\lambda$ of $\bK$. The intersection $K\colonequals \bK\cap\{g\in G:|\det g|=1\}$ must be compact open (since $\cO_F^\times\subset F^\times$ is open), so by conjugating we may assume $K\subset K_0$. Then,
\begin{align*}
    \rho^{K_N}&=\bigoplus_{g\in \bK\backslash G/K_N}\Lambda^{\bK\cap gK_Ng^{-1}}\\
    &=\bigoplus_{g\in\bK\backslash G/K_N}\lambda^{K\cap gK_Ng^{-1}},
\end{align*}
where $\lambda=\Lambda|_K$ is irreducible. Since $\lambda$ is finite-dimensional
\[
\dim(\rho^{K_N})\sim|\bK\backslash S/K_N|,
\]
where $S\colonequals \{g\in G:\lambda^{K\cap gK_Ng^{-1}}\ne0\}$. Moreover, since $\bK\supset ZK\subset ZK_0$ is a chain of finite-index inclusions,
\[
\dim(\rho^{K_N})\sim|ZK_0\backslash S/K_N|.
\]

Here, $\lambda$ is trivial on some compact subgroup $K_m\subset K$.

The general idea of the calculation is to ``bound" the set $S$ by double cosets $ZK_0\backslash G/K_0$. That is, we find subsets $A,B\subset ZK_0\backslash G/K_0$ such that
\[
ZK_0AK_0\subset S\subset ZK_0BK_0,
\]
and calculate the size of the double cosets $ZK_0\backslash ZK_0AK_0/K_N$ and $ZK_0\backslash ZK_0BK_0/K_N$. Such a strategy is fruitful since the double coset $ZK_0\backslash G/K_0$ has a particularly simple description, the Cartan decomposition.

Firstly, since if $K_0\cap gK_Ng^{-1}\subset K_m$ then $\lambda$ is trivial on $K\cap gK_Ng^{-1}\subset  K_m$, we have the inclusion
\[
S\supset \{g\in G:K_0\cap gK_Ng^{-1}\subset K_m\}.
\]
Now, since $ZK_0$ normalizes $K_N$ and $K_m$, the set on the right is a $ZK_0\backslash G/K_0$-double coset. A representative $(\varpi^{a_1},\dots,\varpi^{a_n})\in ZK_0\backslash G/K_0$ is such that
\[
K_0\cap gK_Ng^{-1}=1+(\p^{\max\{0,N+a_i-a_j\}}),
\]
which a subset of $K_m$ exactly when $a_i-a_j<N-m$ for each $i$ and $j$. Thus,
\begin{equation}\label{lower-bound}
S\supset K_0\{(\varpi^{a_1},\cdots,\varpi^{a_n}):a_1\ge\cdots\ge a_n,a_i-a_{i+1}\le N-m\}ZK_0.
\end{equation}

On the other hand,
\begin{equation}\label{upper-bound}
S\subset K_0\{(\varpi^{a_1},\cdots,\varpi^{a_n})\in G:a_1\ge\cdots\ge a_n,\text{ and for each }i,a_i-a_{i+1}\le N\}K_0.
\end{equation}
We will show the converse; let
\[
g=u(\varpi^{a_1},\cdots,\varpi^{a_r},\varpi^{a_{r+1}},\cdots,\varpi^{a_n})v
\]
where $a_r-a_{r+1}>N$ and $u,v\in K_0$. We hope to show $g\notin S$. For any $M>0$, we claim
\[
(K_0\cap gK_Ng^{-1})\cdot K_m=(K_0\cap hK_Nh^{-1})\cdot K_m
\]
so in particular
\[
\lambda^{K\cap gK_Ng^{-1}}=\lambda^{K\cap hK_Nh^{-1}}
\]
if
\[
h=u(\varpi^{a_1+M},\cdots,\varpi^{a_r+M},\varpi^{a_{r+1}},\cdots,\varpi^{a_n})v.
\] Indeed,
\[
(K_0\cap hK_Nh^{-1})\cdot K_m=(1_{n\times n}+(\p^{a_{ij}}))\cdot K_m,
\]
where
\[
a_{ij}\colonequals \begin{cases}\max\{N+a_i-a_j,0\}&\text{if }i,j\le r\text{ or }i,j>r,\\
\max\{N+a_i-a_j+M,0\}&\text{if }i\le r<j,\\
\max\{N+a_i-a_j-M,0\}&\text{if }i> r\ge j.
\end{cases}
\]
We see that if $i>r\ge j$ then
\[
N+a_i-a_j-M\le N+a_r-a_{r+1}\le 0
\]
so $a_{ij}=0$, and otherwise $a_{ij}\ge N$, so
\[
(K_0\cap hK_Nh^{-1})K_m=\begin{pmatrix}
1_{r\times r}&0_{r\times (n-r)}\\
*&1_{(n-r)\times(n-r)}
\end{pmatrix}\cdot K_m,
\]
which is independent of the choice of $M$.

In particular, $g\in S$ if and only if $h\in S$. Thus, if $g\in S$ then
\[
ZK_0\backslash S/K_N\supset ZK_0\backslash \bigcup_{M\ge0}ZK_0(\varpi^{a_1+M},\cdots,\varpi^{a_r+M},\varpi^{a_{r+1}},\cdots,\varpi^{a_n})K_0/K_N,
\]
which is clearly infinite, contradicting $ZK_0\backslash S/K_N$ being finite.

Finally, putting together \eqref{lower-bound}, \eqref{upper-bound}, and Lemma~\ref{coset-size}, we obtain:
\begin{align*}
    \dim(\rho^{K_N})&\sim|\bK\backslash S/K_N|\\
    &\sim\sum_{\substack{a_1\ge\cdots\ge a_n\\a_i-a_{i+1}<N}}q^{\sum_{i<j}\min\{a_i-a_j,N\}}\\
    &\sim q^{\frac{n(n-1)}2N},
\end{align*}
concluding the proof of Proposition~\ref{dim:cuspidal} that for $\rho$ a supercuspidal representation of $\GL_n(F)$,
\[
\dim_{GK}(\rho)=\frac{n(n-1)}2.\qedhere
\]
\end{proof}
\begin{rmk} The technique in the proof of Proposition~\ref{dim:cuspidal} is quite general, once one knows a representation is induced from a compact open modulo center subgroup. For instance, \cite[Thm~2.1]{Bush-Kutz} shows any supercuspidal representation $\rho$ of $\SL_n(F)$ is induced from a compact open subgroup, so again,
\[
\dim(\rho^{K_N\cap\SL_n(F)})\sim q^{\frac{n(n-1)}2N}.
\]
A more precise bound is given in Appendix~\ref{section-sl-n}.
\end{rmk}

\section{Level zero representations of $\GL_2$ and $\GL_3$}\label{appendix-level-0}
An irreducible representation $\pi$ of $G$ is \emph{of level zero} if $\pi^{K_1}\ne0$. Such representations which are cuspidal are classified by the following:

\begin{prop}[{\cite[Section~1.2]{Bushnell}}]
Let $\rho$ be an irreducible cuspidal representation of level zero. Then, there exists a representation $\Lambda$ of $K_0Z$ such that $\Lambda|_{K_0}$ is inflated from a cuspidal representation of $\GL_n(k)$, such that
\[
\rho\cong\cInd_{K_0Z}^G\Lambda.
\]
\end{prop}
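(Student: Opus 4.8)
The plan is to reconstruct $\rho$ from its $K_1$-invariants. First I would observe that since $K_0$ normalizes $K_1$, the finite group $K_0/K_1\cong\GL_n(k)$ acts on the nonzero space $\rho^{K_1}$, and I claim this representation is \emph{cuspidal}. Indeed, for any proper parabolic subgroup $P=MN$ of $\GL_n$ defined over $\sO_F$, with reduction $\overline P=\overline M\,\overline N\subset\GL_n(k)$, the standard comparison between $K_1$-invariants and Jacquet modules shows that the $\overline N$-coinvariants of $\rho^{K_1}$ embed into a space built from the Jacquet module (the $N(F)$-coinvariants) of $\rho$; since $\rho$ is supercuspidal this Jacquet module vanishes, so $\rho^{K_1}$ has no nonzero $\overline N$-coinvariants for any proper $\overline P$, i.e.\ every irreducible constituent of $\rho^{K_1}$ is a cuspidal representation of $\GL_n(k)$. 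Fix one such constituent $\sigma$ and inflate it to $K_0$. Since $Z/(Z\cap K_0)\cong\varpi^{\Z}$ is free abelian and $Z\cap K_0=\sO_F^\times$ acts on $\sigma$ through its central character, $\sigma$ extends to a representation $\Lambda$ of $K_0Z$; I would pin down the extension by demanding $\Lambda|_Z=\omega_\rho$, the central character of $\rho$ (this is consistent on $\sO_F^\times$ precisely because $\sigma\subset\rho^{K_1}$).

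Next I would set $\pi\colonequals\cInd_{K_0Z}^G\Lambda$ and show it is irreducible, hence supercuspidal since $K_0Z$ is compact modulo center. By Mackey theory, $\mathrm{End}_G(\pi)$ is a direct sum, over $g\in K_0Z\backslash G/K_0Z$, of the intertwining spaces $\mathrm{Hom}_{K_0Z\cap gK_0Zg^{-1}}({}^g\Lambda,\Lambda)$. Using the Cartan decomposition I may take $g=\mathrm{diag}(\varpi^{a_1},\dots,\varpi^{a_n})$ with $a_1\ge\cdots\ge a_n$; for $g\notin K_0Z$ the group $K_0\cap gK_0g^{-1}$ reduces modulo $K_1$ onto a \emph{proper} parabolic subgroup $\overline P\subset\GL_n(k)$ --- the one attached to the partition recording the multiplicities of the $a_i$, exactly as in Lemma~\ref{coset-size} --- and the intertwining condition then becomes one about $\mathrm{Hom}_{\overline M}$ between Jacquet modules of $\sigma$ along $\overline N$ and along the opposite radical; cuspidality of $\sigma$ forces these to vanish. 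Hence only the trivial double coset contributes, and $\mathrm{End}_G(\pi)=\mathrm{End}_{K_0Z}(\Lambda)=\C$, so $\pi$ is irreducible.

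Finally I would identify $\pi$ with $\rho$. Since $\sigma$ is an irreducible $K_0$-subrepresentation of $\rho^{K_1}\subset\rho|_{K_0}$ and $\Lambda|_Z=\omega_\rho$ while $\rho(\varpi)$ is a scalar, one gets $\mathrm{Hom}_{K_0Z}(\Lambda,\rho|_{K_0Z})\neq 0$. As $K_0Z$ is open in $G$, Frobenius reciprocity gives $\mathrm{Hom}_G(\pi,\rho)=\mathrm{Hom}_{K_0Z}(\Lambda,\rho)\neq 0$, and since both $\pi$ and $\rho$ are irreducible this forces $\rho\cong\pi=\cInd_{K_0Z}^G\Lambda$ with $\Lambda|_{K_0}$ inflated from the cuspidal representation $\sigma$ of $\GL_n(k)$, which is the assertion.

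The main obstacle is the irreducibility step of the middle paragraph: showing the intertwining of $\Lambda$ is supported only on $K_0Z$. This splits into (i) computing, in the spirit of Lemma~\ref{coset-size}, the reduction modulo $K_1$ of $K_0\cap gK_0g^{-1}$ for $g$ diagonal and recognizing it as a proper parabolic $\overline P\subset\GL_n(k)$, and (ii) turning the resulting $\overline P$-intertwining into a vanishing statement for Jacquet modules of the cuspidal $\GL_n(k)$-representation $\sigma$, invoking that cuspidal representations of finite reductive groups have no nonzero Jacquet module along any proper parabolic. Everything else reduces to formal adjunction, semisimplicity over the finite group $\GL_n(k)$, and the Cartan decomposition.
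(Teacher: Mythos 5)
The paper offers no argument for this statement---it is quoted directly from Bushnell's notes---so there is nothing internal to compare against; your proposal reconstructs what is essentially the standard (and Bushnell's own) depth-zero argument, and it is correct in outline. The two genuinely nontrivial inputs are exactly where you place them: (i) cuspidality of $\rho^{K_1}$ as a $\GL_n(k)$-representation, which is the Moy--Prasad/Morris depth-zero comparison of $K_1$-invariants with Jacquet modules (this needs the Iwahori factorization of $K_1$ and Jacquet's lemma, not just a formal embedding, but it is a standard true statement); and (ii) the intertwining computation, where the one point to state carefully is which unipotent radical does the work: for $g=\mathrm{diag}(\varpi^{a_1},\dots,\varpi^{a_n})$ with $a_1\ge\cdots\ge a_n$ not all equal, the block \emph{lower} unipotent subgroup $U^-$ lies in $K_0\cap gK_0g^{-1}$, satisfies $g^{-1}U^-g\subset K_1$, and reduces to the unipotent radical of the proper parabolic ${}^t\overline P_\lambda$ appearing in Lemma~\ref{coset-size}, so any intertwiner lands in $\sigma^{\overline N^-}=0$ by cuspidality of $\sigma$. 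With those two points made precise, the extension of $\sigma$ to $K_0Z$ via $\omega_\rho$, the irreducibility of $\cInd_{K_0Z}^G\Lambda$ from the triviality of intertwining off $K_0Z$, and the identification with $\rho$ by Frobenius reciprocity for compact induction from an open subgroup all go through as you describe.
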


The growth polynomial of level zero representations may be calculated using the techniques in the proof of Proposition~\ref{dim:cuspidal}. A general formula for $\dim(\rho^{K_N})$, for $\rho$ supercuspidal of level zero is an immediate consequence of Proposition~\ref{murn}:

\begin{cor} If $\rho$ is a supercuspidal level zero representation, then
\[
G_\rho(X)=\sum_{\cO}(-1)^{n+r}n(r-1)!w_\cO^{-1}\begin{bmatrix}n\\n_1,\dots,n_r\end{bmatrix}_qX^{\frac12\dim\cO},
\]
where the nilpotent orbit $\cO$ correspond to the partition $n=n_1+\cdots+n_r$, and $w_\cO$ is the number of permutation of $r$ letters which fix the $r$-tuple $(n_1,\dots,n_r)$.
\end{cor}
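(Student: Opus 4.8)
The plan is to read the corollary off Proposition~\ref{murn}~(1): a supercuspidal representation of level zero is precisely a tame supercuspidal attached to an \emph{unramified} generic character $(E/F,\theta)$ whose associated character $\phi$ has level $j=0$, and for $j=0$ the factor $q^{(n^2-n-\dim\cO)j/2}$ occurring in Proposition~\ref{murn}~(1) equals $1$, so that the stated formula drops out of the one in Proposition~\ref{murn}~(1).

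Concretely, I would first invoke the classification recalled just above (see \cite[Section~1.2]{Bushnell}): a supercuspidal level zero representation $\rho$ of $\GL_n(F)$ has the form $\rho\cong\cInd_{K_0Z}^G\Lambda$, where $\Lambda|_{K_0}$ is inflated from an irreducible cuspidal representation $\sigma$ of $\GL_n(k_F)$. By Green's parametrization, $\sigma$ corresponds to a $\Gal(k_E/k_F)$-regular character $\bar\theta$ of $k_E^\times$, where $k_E/k_F$ is the residue extension of degree $n$. Taking $E/F$ to be the unramified extension of degree $n$ and $\theta$ any character of $E^\times$ inflating $\bar\theta$ on $\sO_E^\times$ (so that $\theta$ is trivial on $1+\p_E$), Howe's construction—the one underlying Proposition~\ref{murn}—recovers $\rho$ from the generic character $(E/F,\theta)$; since $E/F$ is unramified, $\rho$ falls under case (1). (Conversely, depth-zero theory forces every level zero supercuspidal to arise from an unramified elliptic torus, so case (2) never occurs here.)

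It then remains only to pin down the level. Writing $\theta=(\chi\circ N_{E/F})\phi$ as in Proposition~\ref{murn}, we may take $\chi$ to be a character of $F^\times$ trivial on $1+\p_F$, because $\theta$ is trivial on $1+\p_E$; then $\phi$ is trivial on $1+\p_E$, which is the $j=0$ case of Proposition~\ref{murn}~(1). Substituting $j=0$ makes $q^{(n^2-n-\dim\cO)j/2}=1$, leaving
\[
G_\rho(X)=\sum_{\cO}(-1)^{n+r}n(r-1)!\,w_\cO^{-1}\begin{bmatrix}n\\n_1,\dots,n_r\end{bmatrix}_qX^{\frac12\dim\cO},
\]
which is the assertion.

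The only genuine point to watch—the main (and essentially only) obstacle—is the bookkeeping of normalizations: one must check that the condition $\rho^{K_1}\neq0$ corresponds under Howe's parametrization exactly to $E/F$ unramified together with $\phi$ of level $0$, and that $\chi$ can be chosen so that the residual character $\phi$ is genuinely of level zero rather than merely of bounded level. Both are standard consequences of depth-zero theory—in particular a level zero representation is automatically tame, so no restriction on the residue characteristic is needed—and with these identifications in place the corollary is immediate from Proposition~\ref{murn}.
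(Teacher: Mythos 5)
Your proposal is correct and takes essentially the same route as the paper, which states this corollary as an immediate consequence of Proposition~\ref{murn}~(1): level zero supercuspidals arise from an unramified $(E/F,\theta)$ with $\phi$ of minimal level, so the factor $q^{(n^2-n-\dim\cO)j/2}$ is $1$, and your identification via the depth-zero classification and Green's parametrization is exactly the standard justification the paper leaves implicit. The only caveat is the off-by-one ambiguity in the paper's parenthetical definition of the level $j$ (trivial on $1+\p_E^{j}$ versus $1+\p_E^{j+1}$), but your reading agrees with the normalization the corollary clearly intends.
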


We offer an alternate, \emph{more direct} calculation for level zero representations for small $n$.

Let $\rho=\cInd_{K_0Z}^G\Lambda$ be an irreducible supercuspidal representation of level zero.
By Lemma~\ref{fixed},
\begin{align*}
    \rho^{K_N}&=\bigoplus_{g\in K_0Z\backslash G/K_N}\Lambda^{K_0Z\cap gK_Ng^{-1}}\\
    &=\bigoplus_{g\in K_0Z\backslash G/K_N}\sigma^{(K_0\cap gK_Ng^{-1})/K_1},
\end{align*}
where $\Lambda|_K$ is inflated from the representation $\sigma$ of $\GL_n(k)$. Since $K_0Z$ normalizes $K_N$, only the images of $g$ in $K_0Z\backslash G/K_0$ matter. That is, we may suppose $g$ is of the form $(\varpi^{a_1},\dots,\varpi^{a_n})$ for integers $a_1\ge\cdots\ge a_n=0$. Thus,
\begin{equation}\label{level0-formula}
\dim(\rho^{K_N})=\sum_{a_1\ge\cdots\ge a_n=0}|K_0\backslash K_0(\varpi^{a_1},\dots,\varpi^{a_n})K_0/K_N|\cdot\dim(\sigma^{(K_0\cap gK_Ng^{-1})/K_1}).
\end{equation}

This gives a polynomial expression for the dimension $\dim(\rho^{K_N})$. To illustrate the techniques, we will give an explicit formula when $n=2,3$ (in the case $n=2$, this is a direct confirmation of Example~\ref{gl2-dim}):

\begin{prop}
When $\rho$ is a supercuspidal level zero representation of $\GL_2$,
\[
G_\rho(X)=[2!]_qX-2
\]
and when $\rho$ is a supercuspidal level zero representation of $\GL_3$,
\[
G_\rho(X)=[3!]_qX^3-3[3]_qX^2+3,
\]
where $X=q^{N-1}$, and $[n]_q\colonequals \frac{q^n-1}{q-1}$, $[n!]_q\colonequals [1]_q\cdots[n]_q$.
\end{prop}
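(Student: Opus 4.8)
The plan is to evaluate the right-hand side of \eqref{level0-formula} term by term. Fix $N$ and a representative $g=\mathrm{diag}(\varpi^{a_1},\dots,\varpi^{a_n})$ with $a_1\ge\cdots\ge a_n=0$. A direct matrix computation gives $K_0\cap gK_Ng^{-1}=1+\big(\p^{\max\{N+a_i-a_j,\,0\}}\big)_{i,j}$, so its image $H_g$ in $\GL_n(k)=K_0/K_1$ is the lower-triangular unipotent subgroup generated by the root subgroups $U_{e_i-e_j}$ with $i>j$ and $a_j-a_i\ge N$. The two ingredients needed are then $|K_0\backslash K_0gK_0/K_N|$, supplied by Lemma~\ref{coset-size}, and $\dim\sigma^{H_g}$, where $\sigma$ is the cuspidal representation of $\GL_n(k)$ that inflates to $\Lambda|_{K_0}$.

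First I would classify $H_g$. The set of ``filled'' positions is downward closed in the sense that if $(i,j)$ is filled then so is every more extreme $(i',j')$ with $i'\ge i$, $j'\le j$; in particular $(n,1)$ is filled whenever anything is. For $n=2$ this forces $H_g$ to be trivial (when $a_1\le N-1$) or the full unipotent radical of $\overline B$; for $n=3$ it forces $H_g$ to be trivial, equal to the single root subgroup $U_{e_3-e_1}$ (when $a_1\ge N$ while $a_1-a_2\le N-1$ and $a_2\le N-1$, which forces $a_1>a_2\ge1$), or to contain the unipotent radical of one of $\overline P_{(1,2)},\overline P_{(2,1)}$. Cuspidality of $\sigma$ gives $\sigma^{U_{\overline P}(k)}=0$ for every proper parabolic $\overline P$, so the only surviving cases are $H_g=\{1\}$, with $\dim\sigma^{H_g}=\dim\sigma$, and (for $n=3$) $H_g=U_{e_3-e_1}$. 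For the latter I would compute $\dim\sigma^{U_{e_3-e_1}}=(q-1)^2$ by embedding $U_{e_3-e_1}$ as a coordinate line of the abelian group $\overline U_{(2,1)}(k)\cong k^2$: the Levi factor $\GL_2(k)$ of $\overline P_{(2,1)}$ permutes the character-isotypic components of $\sigma|_{\overline U_{(2,1)}(k)}$ transitively among the $q^2-1$ nontrivial characters, so each such component has dimension $\dim\sigma/(q^2-1)=q-1$, while the trivial one vanishes by cuspidality; since $U_{e_3-e_1}$ is a line, $\sigma^{U_{e_3-e_1}}$ is the sum of the $q-1$ nontrivial isotypic pieces whose character is trivial on that line, of total dimension $(q-1)^2$.

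With this in hand the sums are finite. For $\GL_2$ only $a_1\in\{0,1,\dots,N-1\}$ contribute, with coset sizes $1$ (for $a_1=0$) and $q^{a_1-1}(q+1)$ (for $1\le a_1\le N-1$, by Lemma~\ref{coset-size}), and $(q-1)\big(1+(q+1)\sum_{a_1=1}^{N-1}q^{a_1-1}\big)$ telescopes to $(q+1)q^{N-1}-2=[2!]_qX-2$. For $\GL_3$ the contributing $g$ split into the family with $a_1\le N-1$ (weight $\dim\sigma=(q-1)(q^2-1)$) and the family with $N\le a_1\le 2N-2$, $\,a_1-N+1\le a_2\le N-1$ (weight $(q-1)^2$); in the first family the exponent $\tfrac12(\sum n_i^2-n^2)+\sum_{i<j}\min\{a_i-a_j,N\}$ of Lemma~\ref{coset-size} has $\min$-sum collapsing to $2a_1$, and in the second to $a_1+N$. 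Summing the resulting one- and two-indexed $q$-geometric series and simplifying yields $[3!]_qX^3-3[3]_qX^2+3$.

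The main obstacle is precisely this last bookkeeping for $\GL_3$: for each $a_1$ one must correctly enumerate the range of $a_2$ and read off the shape of the associated partition (hence the multinomial $q$-coefficient $\begin{bmatrix}3\\n_1,\dots\end{bmatrix}_q$), and intermediate partial sums of the form $\sum_b b\,q^{2b}$ introduce $N$-dependent coefficients that must be verified to cancel between the two families so that the answer is genuinely a polynomial in $X=q^{N-1}$. As a consistency check, the resulting polynomials coincide with the general level-zero formula obtained above from Proposition~\ref{murn} (take $n=2,3$ and evaluate $w_\cO^{-1}$ and the multinomials on the partitions $(1,1)$ and $(1,1,1),(2,1),(3)$); the secondary subtlety is the finite-group identity $\dim\sigma^{U_{e_3-e_1}}=(q-1)^2$, which is what makes the $\GL_3$ constant term equal to $3$.
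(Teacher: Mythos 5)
Your proposal is correct and follows essentially the same route as the paper: evaluate \eqref{level0-formula} using Lemma~\ref{coset-size}, kill all terms where the reduction of $K_0\cap gK_Ng^{-1}$ contains a proper parabolic's unipotent radical by cuspidality, compute $\dim\sigma^{U_{e_3-e_1}}=(q-1)^2$ (the paper's $d/(q+1)$) via transitivity of the Levi on nontrivial characters of $\overline U_{(2,1)}(k)\cong k^2$, and then sum the same two families of double cosets ($a_1<N$ with weight $\dim\sigma$, and $N\le a_1\le 2N-2$, $a_1-N+1\le a_2\le N-1$ with weight $(q-1)^2$), with the $N$-dependent partial sums cancelling exactly as in the paper's partition-by-partition bookkeeping.
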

\begin{proof}

When $n=2$ and $g=(\varpi^a,1)\in K_0Z\backslash G/K_0$ with $a\ge0$, then
\[
(K_0\cap gK_Ng^{-1})/K_1=\begin{pmatrix}1&0\\\p^{\max\{0,N-a\}}&1\end{pmatrix}/K_1=\begin{cases}1&\text{if }N>a\\
\begin{pmatrix}1&0\\{*}&1\end{pmatrix}&\text{if }N\le a,
\end{cases}
\]
so since $\sigma$ is cuspidal,
\[
\sigma^{(K_0\cap gK_Ng^{-1})/K_1}=\begin{cases}
\sigma &\text{if }N>a\\
0& \text{if }N\le a.
\end{cases}
\]
Thus, since $\dim(\sigma)=q-1$ (see \cite[Thm~6.4]{Bush-Henn}), by \eqref{level0-formula} and Lemma~\ref{coset-size} we have:
\begin{align*}
    \dim(\rho^{K_N})&=\sum_{0\le a<N}(q-1)|K_0\backslash K_0(\varpi^a,1)K_0/K_N|\\
    &=(q-1)+\sum_{0<a<N}(q-1)\cdot q^{a-1}(q+1)\\
    &=(q+1)q^{N-1}-2.
\end{align*}
When $n=3$ and $g=(\varpi^a,\varpi^b,1)\in K_0Z\backslash G/K_0$ with $a\ge b\ge0$ then
\[
(K_0\cap gK_Ng^{-1})/K_1=\begin{cases}1&\text{if }N>a\\
\begin{pmatrix}1\\&1\\{*}&&1\end{pmatrix}&\text{if }a\ge N,a-b,b<N\\
\text{Levi subgroup}&\text{otherwise}.
\end{cases}
\]
In the last case, $\sigma^{(K_0\cap gK_Ng^{-1})/K_1}=0$ since $\sigma$ is cuspidal. In the first case, $\sigma^{(K_0\cap gK_Ng^{-1})/K_1}=\sigma$. For the second case, decompose $\sigma$ as follows:
\[
\sigma\cong\bigoplus_{\chi: N\to\C^\times}\chi^{m_\chi},
\]
where $N\colonequals \begin{pmatrix}1&\\&1\\{*}&{*}&1\end{pmatrix}$ is a nilpotent subgroup. Since $\sigma$ is cuspidal, $m_1=0$, and the conjugation action by $\GL_2(k)\times k^\times$ on the subgroup $N$ permutes the other characters transitively so $m_\chi$ is constant for nontrivial $\chi$. Thus, in fact $\sigma\cong\bigoplus_{\chi\ne1}\chi^{\oplus d/(q^2-1)}$, where $d=(q-1)(q^2-1)$ is the dimension of $\sigma$.

Thus, $\sigma^{\begin{pmatrix}1\\&1\\{*}&&1\end{pmatrix}}$ is of dimension $d/(q+1)$. Combining,
\begin{align*}
    \dim(\rho^{K_N})=d\sum_{N>a\ge b\ge0}&|K_0\backslash K_0(\varpi^a,\varpi^b,1)K_0/K_N|\\&+\frac{d}{q+1}\sum_{\substack{a\ge b\ge 0\\a-b,b<N,a\ge N}}|K_0\backslash K_0(\varpi^a,\varpi^b,1)K_0/K_N|.
\end{align*}
For each of these sums, we will run casework on what partition $\{a,b,0\}$ corresponds to and apply Lemma~\ref{coset-size}:
\begin{itemize}
    \item $3=3$:
    \[
    d\sum_{a=b=0}|K_0\backslash K_0(\varpi^a,\varpi^b,1)K_0/K_N|=d.
    \]
    \item $3=2+1$:
    \begin{align*}
    d\sum_{N>a=b>0}|K_0\backslash K_0(\varpi^a,\varpi^b,1)K_0/K_N|&=d\sum_{N>a>0}q^{-2+2a}[3]_q\\
    &=\frac{d[3]_q}{q^2-1}(q^{2N-2}-1).
    \end{align*}
    \item $3=1+2$: the formula is the same as in $3=1+2$.
    \item $3=1+1+1$:
    \begin{align*}
        d\sum_{N>a>b>0}|K_0\backslash K_0(\varpi^a,\varpi^b,1)K_0/K_N|&=d\sum_{N>a>0}(a-1)q^{-3+2a}[3!]_q\\
        &=\frac{d[3!]_q}{(q^2-1)^2}((N-2)q^{2N-1}-(N-1)q^{2N-3}+q).
    \end{align*}
\end{itemize} 
For the second term, only the partition $3=1+1+1$ appears, and evaluates as (this is the main term):
\begin{align*}
    \frac{d}{q+1}\sum_{\substack{a>b>0\\a-b,b<N,a\ge N}}q^{-3+a+N}[3!]_q&=\frac{d}{q+1}\sum_{a=N}^{2N-2}(2N-a-1)q^{-3+a+N}[3!]_q\\
    &=\frac{d[3!]_q}{(q+1)(q-1)^2}(q^{3N-3}-Nq^{2N-2}+(N-1)q^{2N-3}).
\end{align*}
Combining everything, and using $d=(q-1)(q^2-1)$, we obtain:
\[
\dim(\rho^{K_N})=[3!]_q\cdot q^{3N-3}-3[3]_qq^{2N-2}+3.\qedhere
\]
\end{proof}

\begin{rmk}
Combining the sums corresponding to the partition $3=1+1+1$ from both terms gives a polynomial in $q^{N-1}$, suggesting a clever reorganization of sums may help simplify and generalize the calculation.
\end{rmk}

\end{document}